\theoremstyle:=definition,remark,plain\do{%
        \expandafter\g@addto@macro\csname th@\theoremstyle\endcsname{%
        \addtolength\thm@preskip\parskip}}
\renewenvironment{abstract}
 { \normalsize
  \list{}{\setlength{\leftmargin}{.0cm}%
    \setlength{\rightmargin}{\leftmargin}}
  \item {\bf \abstractname.} \relax}
{\endlist}
\newcommand{\lce}{left-c.e.\ }
\newcommand{\ce}{c.e.\ }
\newcommand{\lcen}{left-c.e.}
\newcommand{\pf}{prefix-free }
\newcommand{\ml}{Martin-L\"{o}f }
\newcommand{\twomel}{2^{<\omega}}
\newcommand{\mm}{\mathbf{m}}
\newcommand{\MM}{\mathcal{M}}
\newcommand{\Mb}{\mathbf{M}}
\definecolor{dnrbl}{rgb}{0,0,0.3}
\definecolor{dnrgr}{rgb}{0,0.3,0}
\definecolor{dnrre}{rgb}{0.5,0,0}
\newtheorem{theorem}{Theorem}[section]
\newtheorem{lemma}[theorem]{Lemma}
\newtheorem{fact}[theorem]{Fact}
\newtheorem{claim}[theorem]{Claim}
\theoremstyle{definition}
\newtheorem*{strategyn*}{Strategy}
\newtheorem{definition}[theorem]{Definition}
\newtheorem{example}[theorem]{Example}
\newtheorem{strategy}[theorem]{Strategy}
\theoremstyle{remark}
\newtheorem{remark}[theorem]{Remark}
\newtheorem*{question*}{Question}
\newtheorem*{theorem*}{Theorem}
\newcommand{\uhr}{{\upharpoonright}}
\def\t{\tilde}
\def\m{m}
\def\var{\varepsilon}
\def\mcal{\mathcal}
\def\mm{A}
\def\linee{line}
 \def\betting{betting}
 \def\symm{Sym}
\def\avg{avg}
\def\upright{up-right}
\def\invoking{invoking}
\def\sideindicator{side-indicator}
\def\validtuple{valid tuple}
\def\mbE{\mathbb{E}\hspace{0.04cm}}
\def\variance{Var\hspace{0.04cm}}
\def\catching{catching}
\def\semivalid{semi-valid}
\def\variancee{variance}
\def\constant{constant}
\def\potentialwin{potentially-winning}
\def\winningstring{non-catchable-string}
\def\supermartingaleapp{supermartingale-approximation}
\def\homogeneous{homogeneous}
\def\uhr{\upharpoonright}
\title{Irreducibility of enumerable betting-strategies}
\author{George Barmpalias
 \and Lu Liu}
\thanks{Supported by NSFC grant 11971501.}
\subjclass[2010]{Primary  03D80; Secondary 68Q30 03D32}
\keywords{computability theory, algorithmic randomness theory,  Martin-L\"{o}f randomness, left c.e. supermartingale.}
\begin{document}
\maketitle

\def\sided{sided}
\def\mcal{\mathcal}
\def\h{\hat}
\def\t{\tilde}
\def\tba{\textcolor{red}{TBA}}

\def\m{\mu}
\def\mb{\mathbf}
\def\var{\varepsilon}
\def\mm{A}
\def\linee{line}
\def\alice{Alice}
\def\baby{Baby}
 \def\betting{betting}
 \def\symm{Sym}
\def\avg{avg}
\def\upright{up-right}
\def\v{\vec}
\def\invoking{invoking}
\def\sideindicator{side-indicator}
\def\validtuple{valid tuple}
\def\mbE{\mathbb{E}}
\def\variance{Var}
\def\catching{catching}
\def\semivalid{semi-valid}
\def\typeaattention{winning attention}
\def\variancee{variance}
\def\constant{constant}
\def\potentialwin{potentially-winning}

\def\catchingpoint{catching-point}
\def\winningstring{non-catchable-string}

\def\supermartingaleapp{supermartingale-approximation}
\def\homogeneous{homogeneous}
\def\homogeneity{homogeneity}
\def\stronghomogeneous{homogeneous}
\def \stronghomogeneity{homogeneity}
\def\uhr{\upharpoonright}

\def\fin{Fin}
\def\treelike{tree-like}

\begin{abstract}
We study the problem of whether a betting strategy
can be reduced to a  set of restricted  betting strategies, such as betting on a restricted
set of stages or bet on a restricted of favorable outcomes.
We show that the class of effectively enumerable betting strategies
has irreducible members (which cannot be reduced to a set of restricted betting strategies).
We answer   questions of Kastermans and Hitchcock by constructing a real on which no kastergale
(\lce supermartingale with effectively determined favorable outcomes) succeeds, but some general
\lce supermartingale succeeds on it.
We also generalize Muchnik's paradox  by showing that
there is a non-\ml-random real such that no muchgale
(\lce supermartingale betting on restricted stages) succeeds on it.
Our methodology is general enough to strongly support the metaconjecture that
if a natural class of \lce supermartingales defines \ml randomness,
then a single member of that class can do so;
thus,
 the class of \lce supermartingales cannot be reduced to a simpler, natural subclass of it.
 For example, we show that there is a non-\ml-random real such that no kastergale or muchgale succeeds on it.
 \end{abstract}
\vspace*{\fill}
\noindent{\bf George Barmpalias}\\[0.2em]
\noindent State Key Lab of Computer Science,
Institute of Software, Chinese Academy of Sciences, Beijing, China.
\textit{E-mail:} \texttt{\textcolor{dnrgr}{barmpalias@gmail.com}}.\par
\addvspace{\medskipamount}\medskip\medskip
\noindent{\bf Lu Liu}\\[0.2em]
\noindent School of Mathematics and Statistics, HNP-LAMA, Central South University,
Changsha, China.
\textit{E-mail:} \texttt{\textcolor{dnrgr}{g.jiayi.liu@gmail.com.}}
\vfill \thispagestyle{empty}
\clearpage
\setcounter{tocdepth}{2}
\tableofcontents

\section{Introduction}
In  betting games, a player is repeatedly allocates capital on an array of options,
at the risk of loss or the possibility of gain, depending on the outcomes of an unpredictable process.
Such games have proved to be very useful models of a variety of problems, and an indispensable tool for understanding
the nature of information that has its roots in
 frequentist  foundation for probability by
 \citet{MR21400mises,misesbook}.
A betting strategy for the player has two components: (a) the favored outcome at each bet; and (b) the amount of capital,
the {\em wager} that is placed on the preferred outcome, which reflects the odds of the bet with respect to the information
that is available to the player. For simplicity we restrict our attention to the generic case of binary outcomes, where the wager
is doubled or lost depending whether the actual outcome was favored or not.
Typical restricted betting strategies include the self-explanatory {\em fixed-wager}, the {\em martingale}
where each wager is twice the previous one, and its variations,
and {\em proportional betting} in which wagers are set as a fixed proportion of the existing capital.
A prominent example of the latter is the ubiquitous investment strategy known as the {\em Kelly criterion},
which maximizes the expectation of the  logarithm of the capital, and was
discovered by
\cite{kellyorig}
as an interpretation of
Shannon's concept of information rate.

The success of a betting strategy may lie on its capital allocation, its dynamic choice of
favorable outcomes, a strategic choice of the stages where a bet is placed, or  a combination of the above.
In order to examine the dependence of a betting strategy  on the above components, we can ask if it can be \emph{reduced}
\footnote{Here
``reduce" means that it succeeds on a real $x\in 2^\omega$ iff at least one of the restricted betting strategies succeeds on $x$.} to
a set of
restricted betting strategies,
which do not alternate favorable outcomes, or restrict their wagers on a specific subset of the stages:
\begin{enumerate}[\hspace{0.5cm}(a)]
\item can it be reduced  to {\em fixed-outcome} betting strategies ?
\item can it be reduced to betting strategies that bet on alternate stages?
\end{enumerate}
Property (a)
indicates that the success  is not based on strategic alternations of favorable outcomes, but  on
its dynamic capital allocation.
The possibility of such betting strategy reduction  is the topic of the present work;  the answer
depends on the information that is available in the game. We show that
\begin{equation*}
\parbox{10cm}{there are enumerable betting strategies
that are not reducible to \emph{restricted} enumerable betting strategies}
\end{equation*}
where restricted will be made precise  and includes the above examples of fixed-outcome and stage-restricted strategies.
In this way, we answer a question of Kastermans and a question of Hitchcock, as reported by
\citet[\S 7.9]{rodenisbook} and \citet{rodkasternewt},
as well as extending  work of \citet{muchnik2009algorithmic}.

\ \\

\noindent\textbf{Kasterman's question}.
Following \citet{Ville1939}
 we formalize  betting strategies in terms of {\em martingales}:
non-negative  functions $M$ on $2^{<\omega}$  such that
\begin{equation}\label{vvqVoKIn9m}
2\cdot M(\sigma)=M(\sigma\ast 0)+M(\sigma\ast 1)
\hspace{0.5cm}\textrm{for all $\sigma\in 2^{<\omega}$}
\end{equation}
where $\omega$ is the set of natural numbers,
$2^{<\omega}$ is the set of binary strings
and $\ast$ denotes concatenation. Martingales model the capital of a player who plays against a casino, by
placing wagers on binary outcomes. If the equality in \eqref{vvqVoKIn9m} is replaced with $\geq$
we get the notion of {\em supermartingale}, which often model inflation environments
as well as savings. In terms of feasibility, we also need to qualify the betting strategies
from a constructive point of view: a standard choice advocated by \citet{Schnorr:71} is to consider computable betting strategies, namely
martingales that can be fully defined by a Turing machine, and
their effective countable mixtures. The latter can be
equivalently defined as the
{\em \lce (left-computably-enumerable) martingales, supermartingales}, which means that there exist
computable array $(M[t]:t\in\omega)$ of
(super)martingales such that $M[t]$ dominates $M[t-1]$ and their point-wise limit is $M$.
Left-c.e.\ supermartingales represent games where betting decisions  are not fully transparent to
a constructive observer,
being infinitary and revealed only gradually, through monotone approximations that converge to the limit wagers.
We say that a (super)martingale $M$ {\em succeeds} on an\
infinite binary sequence $x$ (also called a {\em real}) if it is unbounded on the initial segments of $x$.
In this case we may say that $x$ is {\em predictable with respect to $M$}.
Left-\ce martingales and supermartingales are indispensable tools in the study of algorithmic information:
$x$  is algorithmically random in the sense of \citet{MR0223179}, also known as {\em \ml-random } or {\em 1-random},
iff no \lce (super)martingale succeeds on it.

We formally define fixed-outcome betting strategies and their generalizations.
\begin{definition}[Sided betting strategies]\label{VPTxEMxdFT}
Let $i<2$,  $M:\twomel\to\mathbb{R} $ and let $p$ be a partial boolean function
on binary strings. We say that $M$ is {\em $i$-sided at $\sigma$} if
$M(\sigma\ast i)\geq M(\sigma\ast (1-i))$.
We say that {\em $M$ is $p$-sided} if it
is $p(\sigma)$-sided on each $\sigma$ in the domain of $p$,
and $M(\sigma\ast i)= M(\sigma\ast (1-i))$ otherwise.
For a $p$-sided function $M$, we say $M$ is
\begin{itemize}
\item {\em single-sided} if $p$ is a boolean constant;
\item {\em partially-computably-sided} if   $p$ is partial
computable
\footnote{Clearly, for every left-c.e. partially-computably-sided supermartingale
$M$, there is a computable array of supermartingales $(M[t]:t\in\omega)$
and a partial computable function $p:\subseteq 2^{<\omega}\rightarrow 2$
such that $\lim\nolimits_{t\rightarrow\infty}M[t](\sigma) = M(\sigma)$
for all $\sigma\in 2^{<\omega}$ and $M[t]$ is $p[t]$-sided
where $p[t]$ is the approximation of $p$ at time $t$.
 i.e., For each $\sigma\in 2^{<\omega}$,
 $M$ has only one chance to decide its \sided ness at $\sigma$
 and before it makes that decision, it has to be both $0,1$-\sided\ at
 $\sigma$.}.
\end{itemize}
\end{definition}

The interplay between the flow of information in the game and the strength of a betting strategy has been the core of several problems in
algorithmic information, most notably the question of whether computable {\em non-monotonic betting} can simulate \lce
supermartingales, which has its roots in the study of
non-monotonic selection rules by  \citet{Kolmogorov98} and  \citet{Loveland66} and
remains open to this day despite numerous attempts by \citet{Merkleea06}, \citet{Bienvenu.Hoelzl.ea:09}
among others.
In this spirit, Kastermans asked the following
generalized version of (a) for \lce supermartingales:
\begin{equation}\label{eYV1hawDh}
\parbox{10cm}{{\em Kastermans' question:} given a \lce supermartingale $M$ which succeeds on $x$, does there exist
a  partially-computably-sided $M'$ which succeeds on $x$ ?}
\end{equation}
Hitchcock asked if the above reduction holds in the special case where the wagers the ratios $M(\sigma\ast i)/M(\sigma)$
are uniformly \lcen, which means that a commitment of a betting strategy to bet a certain proportion of the capital on some outcome
at some stage of the approximation is irreversible, and unaffected by any potential future increase of the existing capital.
This subclass of the \lce supermartingales is known as {\em hitchgales}, while the
partially-computably-sided \lce supermartingales are known as {\em kastergales}. These problems were
reported by  \citet[\S 7.9]{rodenisbook} and \citet{rodkasternewt}.

\subsection{Our contribution}
Our main technical result is a negative answer to the questions of Kastermans and Hitchcock, along with several extensions
which give negative answers to questions like (a), (b) about reducibility into restricted betting strategies, for the case of \lce supermartingales.
We reach the solution to \eqref{eYV1hawDh} gradually through a systematic
analysis of the expectation and variance of supermartingales in a game-theoretic framework, which differentiates our approach
to previous attempts that are detailed below.
Our main innovation is developed in \S\ref{singsec4} where
we prove the irreducibility to the fixed-outcome betting strategies:
\begin{theorem}[Fixed-outcome]\label{singth3}
There exists a real $x$ such that
no single-sided \lce supermartingale succeeds
on $x$, but $x$ is not 1-random,
so some \lce supermartingale succeeds on $x$.
\end{theorem}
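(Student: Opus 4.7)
The plan is to construct $x$ by a finite-extension argument $\sigma_0 \prec \sigma_1 \prec \cdots$ with $x = \lim_s \sigma_s$. Enumerate the pairs $(M_e, i_e)$ with $M_e$ a \lce supermartingale and $i_e \in \{0,1\}$ a putative side. The requirement $R_{e, i_e}$ asks that, if $M_e$ is $i_e$-sided, then $\sup_n M_e(x \uhr n) < \infty$. At stage $s$ I will extend $\sigma_s$ by $\tau_s$ such that for every $e \leq s$ and every $i \in \{0,1\}$, if $M_e$ is $i$-sided then $\max_{\tau' \preceq \tau_s} M_e(\sigma_s \ast \tau') \leq (1 + 2^{-s}) \cdot M_e(\sigma_s)$. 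Telescoping $\prod_s(1+2^{-s}) < \infty$ then yields $\sup_n M_e(x \uhr n) < \infty$ for every single-sided $M_e$, meeting every requirement.

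To guarantee $x$ is not \ml-random, I construct in parallel an auxiliary \lce martingale $N$ that bets its full capital on each successive bit of $x$: once $\sigma_s$ is fixed at stage $s$, declare $N(\sigma) = 2^{|\sigma|}$ for $\sigma \preceq \sigma_s$ and $N(\sigma) = 0$ otherwise. This is monotonic in $s$ and hence a bona fide \lce martingale, with $N(x \uhr n) = 2^n \to \infty$ witnessing that $x$ is not \ml-random. Because my construction will force the bits of $x$ to contain both $0$ and $1$ infinitely often, $N$ itself is neither $0$-sided nor $1$-sided, so $N$ poses no conflict with any requirement $R_{e, i_e}$.

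The technical core is the key lemma: given $\sigma$, $K$, $\epsilon > 0$, and a list $(M_e, i_e)_{e \leq K}$, there exists $\tau$ such that for every $e \leq K$ for which $M_e$ is in fact $i_e$-sided, $\max_{\tau' \preceq \tau} M_e(\sigma \ast \tau') \leq (1 + \epsilon) M_e(\sigma)$. A uniformly random choice of $\tau$ is insufficient: an $i$-sided supermartingale can place all of its capital on one extension (for instance $\tau = i^k$), and Markov's inequality only rules out a $1/(1+\epsilon)$-fraction of $\tau$'s, too weak for a union bound over $e \leq K$. A greedy deterministic strategy also fails: trying to suppress the largest active capital lets an opposite-sided $M_e$ double, and along worst-case trajectories the total capital grows without bound.

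The hard part, and the main obstacle, is establishing this key lemma. I would prove it by a game-theoretic analysis in which the builder of $\tau$ plays a randomized, state-dependent strategy against the family of single-sided supermartingales: at each new bit the builder observes the current approximations $M_e[s](\sigma \ast \tau \uhr k)$ and biases the coin toward the side opposing the most ``threatening'' active $M_e$, with weights tied to the current capitals. The crucial structural input is the $i_e$-sidedness of each $M_e$, which forces $M_e(\rho \ast (1-i_e)) \leq M_e(\rho)$ and thereby gives a one-sided control on the conditional variance of each step. Combining the supermartingale bound on the mean with this variance bound—and a restricted form of Doob's maximal inequality applied to the active supermartingales—yields a positive probability that the chosen $\tau$ simultaneously controls the max of every $M_e$ for $e \leq K$, and such a $\tau$ can be found effectively from the approximations $M_e[s]$, completing stage $s$ and the construction.
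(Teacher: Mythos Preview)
Your key lemma is false as stated. Take $\sigma = \emptyset$ and just two opponents: $M_0$ is $0$-sided with $M_0(\emptyset)=1$, $M_0(0)=2$, $M_0(1)=0$, and $M_1$ is $1$-sided with $M_1(\emptyset)=1$, $M_1(1)=2$, $M_1(0)=0$. For any nonempty $\tau$ the first bit is either $0$ (so $M_0$ doubles at that step) or $1$ (so $M_1$ doubles); hence for some $i$ we have $\max_{\tau'\preceq\tau} M_i(\tau')\geq 2 > (1+\epsilon)M_i(\emptyset)$ whenever $\epsilon<1$. No variance or maximal-inequality argument can rescue this: two opposite-sided opponents can deterministically force one of them to double at the very first bit, so the multiplicative control you ask for is simply unattainable. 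The observation $M_e(\rho\ast(1-i_e))\leq M_e(\rho)$ that you cite is exactly what makes the \emph{other} side dangerous, and your randomized builder cannot avoid both sides at once.

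The paper does not attempt to bound each $M_e$ multiplicatively from its current value. Instead it controls the \emph{sum} $\sum_k M_k$ (after suitable rescaling) below a fixed threshold along $x$; individual $M_k$ may spike provided the total stays bounded. This is achieved via a finite game: Alice enumerates strings in $2^n$, Baby must ``catch'' each by pushing $||\mathbf{M}||_1\geq d$ somewhere along it, and Alice's winning strategy guarantees that if Baby catches every string then $||\mathbf{M}(\emptyset)||_1\geq c$. Playing this against the actual left-c.e.\ $\mathbf{M}$ with $||\mathbf{M}(\emptyset)||_1<c$ means some enumerated string is never caught; that string is the next block of $x$. Crucially this is a c.e.\ enumeration with injury, not a computable finite-extension: you do not know which enumerated string survives. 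Non-$1$-randomness then comes for free because the enumerated sets form a Martin-L\"of test. Your scheme needs the extension to be found effectively (otherwise $N$ is not left-c.e.), and even setting aside the falsity of the lemma, the condition $\max_{\tau'\preceq\tau} M_e(\sigma\ast\tau')\leq (1+\epsilon)M_e(\sigma)$ compares two left-c.e.\ quantities and is not $\Sigma^0_1$, so an effective search is not available.
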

In \S\ref{singsecpartial} we obtain a negative answer to the questions of
Kastermans and Hitchcock.
\begin{theorem}\label{singth1}
There exists a real $x$ such that
no kastergale succeeds
on $x$, but $x$ is not 1-random, so some \lce supermartingale succeeds on it.
\end{theorem}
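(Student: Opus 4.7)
The plan is to refine the construction that proves Theorem \ref{singth3} in order to handle the additional freedom that kastergales have over fixed-outcome strategies: a partial computable side-function $p$ that gradually assigns a favored outcome to each node. I would fix a computable enumeration $(M_e,p_e)_{e\in\omega}$ of all kastergales, with synchronized computable approximations $(M_e[t], p_e[t])$ as sketched in the footnote to Definition \ref{VPTxEMxdFT}. The structural fact to exploit is that at any stage $t$ and any $\sigma$ with $p_e[t](\sigma)\uparrow$, the approximation $M_e[t]$ is forced to satisfy $M_e[t](\sigma\ast 0)=M_e[t](\sigma\ast 1)\le M_e[t](\sigma)$, so $M_e$ cannot gain at $\sigma$ until its side is declared; the entire growth of $M_e$ along a path $x$ is carried by those $\sigma\prec x$ for which $p_e(\sigma)\downarrow$, and is gained only in the direction $p_e(\sigma)$.

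Building on this, I would run a priority construction producing $x$ as a $\Delta^0_2$ sequence, dedicating to each requirement $e$ the task of keeping $M_e$ bounded on initial segments of $x$. At a stage $t$, for every $e\le t$, I would apply the expectation-variance analysis of \S\ref{singsec4} to the \emph{currently-declared} restriction of $M_e$: strings $\sigma$ with $p_e[t](\sigma)\downarrow=i$ are treated as single-sided bets favoring $i$, while strings with $p_e[t](\sigma)\uparrow$ are treated as frozen nodes that contribute no growth. The next bit of $x$ is then chosen against the aggregate side-profile of the higher-priority kastergales, exactly as in Theorem \ref{singth3}. Non-randomness of $x$ would be secured by arranging that the construction produces, as a byproduct, a single \lce supermartingale $N$ (not itself a kastergale) that succeeds on $x$, assembled as a weighted sum of the covering martingales generated by the construction at each stage.

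The main obstacle, and what genuinely distinguishes Theorem \ref{singth1} from Theorem \ref{singth3}, is the \emph{timing} of side declarations: a value $p_e(\sigma)$ may become defined only after the bit of $x$ at position $|\sigma|$ has already been provisionally set, and the revealed side may be precisely the bit we chose, giving $M_e$ a sudden opportunity to gain at $\sigma$. The natural remedy is a bounded-revision mechanism: whenever a new side declaration appears at a prefix of the current approximation of $x$, a controlled number of later bits of $x$ may be revised, and the variance estimate from \S\ref{singsec4} would be leveraged to show that the total potential growth across all such late declarations is summable, so the revision budget per requirement is finite and $x$ stabilizes to a well-defined limit real. Executing this summability bound rigorously, while interleaving requirements of different priorities, is where I expect most of the technical work to lie.
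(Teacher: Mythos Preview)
Your proposal correctly identifies the central obstacle --- late side declarations --- but the ``bounded-revision mechanism'' is not a workable remedy, and this is a genuine gap rather than a detail to be filled in. The single-sided argument of \S\ref{singsec4} forces variance via Claim~\ref{singclaim83}: enumerating $B_\rho=\{\rho 11,\rho 0\}$ while withholding $\tau_\rho=\rho 10$ works because $i$-sidedness gives $M_i(\emptyset)\geq\min\{M_i(0),M_i(11)\}$. For a kastergale this step fails outright: Baby can simply delay declaring $p(\rho)$ and $p(\rho 1)$ until after seeing which of $\rho 0,\rho 11$ you enumerate, then declare sides to match, gaining capital with \emph{zero} variance on $B_\rho$. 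So the variance-creating mechanism you are relying on does not exist for kastergales, and Claim~\ref{singclaim5} --- which only \emph{upper-bounds} total variance --- gives you nothing to revise against. Your summability claim has no lower-bound counterpart to play off of. The paper states this explicitly at the start of \S\ref{singsecpartial}: ``For kastergale, there isn't such a simple strategy (to force Baby to allocate variance on $2^{|\rho|+m}\cap[\rho]^\preceq$).''

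The paper's route around this is structurally different from anything in your sketch. It introduces an intermediate \emph{variance game} (Definition~\ref{singdef965}) in which Alice wins either by the dynamic-goal criterion \emph{or} by forcing $\variance(\Mb[t]\mid A[t])\geq\Delta$; Claim~\ref{singclaim966} shows a winning strategy for this game suffices. The variance game is then won by induction on the number $k$ of kastergales (Lemma~\ref{singlem966}). The base case $k=1$ exploits that a single kastergale must either declare $p(\emptyset)$ --- after which Alice plays the opposite bit --- or leave $\emptyset$ undeclared, in which case $M(\emptyset)\geq M(1)$ and Alice wins by enumerating almost all of $[1]^\preceq$. The inductive step is the key new idea: in the variance game, if type-(b) never fires then each $M_j$ is essentially constant on $A[t]$, so the $k$ kastergales cannot cooperate, and Alice can run the $(k-1)$-strategy against $M_1,\dots,M_{k-1}$ first, then handle $M_0$ separately on the remaining strings. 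This decoupling-by-variance is the engine that replaces the missing Claim~\ref{singclaim83}, and nothing in your proposal supplies an analogue.
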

We demonstrate the generality of our methodology in \S\ref{singsecgen}, by generalizing
a result of \citet{muchnik2009algorithmic} that  is known as {\em Muchnik's paradox}
and has been discussed by \citet{ChernovSVV08}  and   \citet{stacsBauwens14}
in the context of {\em online complexity} and prediction.
Consider the following restriction of \lce supermartingales,
where they are required to restrain their betting to certain partitions of the stages.
\begin{definition}[muchgale]
\label{singdefmuchgale}
Given $i<l$, we say that $M$ is \emph{$(l,i)$-\betting}  if
\[
|\sigma|\equiv i\mod l
\hspace{0.3cm}\Rightarrow\hspace{0.3cm}
M(\sigma ) \geq \max_{j<2} M(\sigma\ast j).
\]
We refer to $(l,i)$-betting \lce supermartingales as \emph{muchgales}.
\end{definition}
\citet{muchnik2009algorithmic} considered the
$(2,i)$-betting \lce supermartingales and showed that there exists a real $x$ such that
no $(2,i)$-betting \lce supermartingale succeeds on it,
but $x$ is not 1-random.
In \S\ref{singsecgen} we generalize this result and show that
some reals are unpredictable with respect to both {\em all muchgales} and all kastergales,  yet they are not 1-random:
\begin{restatable}{theorem}{singthunion}\label{singth5}
There exists a real $x$ such that
no kastergale or muchgale succeeds
on $x$, but $x$ is not 1-random, so some \lce supermartingale succeeds on it.
\end{restatable}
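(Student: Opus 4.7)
The plan is to combine the arguments behind Theorem~\ref{singth1} with the muchgale generalization of Muchnik's paradox developed in~\S\ref{singsecgen}. Since kastergales and $(l,i)$-betting \lce supermartingales each form a countable class, I would enumerate their union as a single list $(N_e)_{e\in\omega}$ of restricted \lce supermartingales and build one real $x$ that defeats every $N_e$, together with an \lce certificate witnessing that $x$ is not \ml{}-random.

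First I would recast the proofs of Theorems~\ref{singth3} and~\ref{singth1} so that they yield not merely a defeating real but a defeating set of positive measure: uniformly in an index for a kastergale $K$ and in a rational $\varepsilon>0$, one should be able to compute an \lce open set $U_{K,\varepsilon}$ of measure at most $\varepsilon$ such that $K$ fails to succeed on every $x\notin U_{K,\varepsilon}$. The game-theoretic variance analysis of~\S\ref{singsec4} already has this flavor, since the adversarial player is randomized and succeeds with positive probability independent of the kastergale's approximation; averaging over the adversary's coin tosses turns the event ``$x$ escapes $K$'' into a large-measure escape-set statement.

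Second, I would prove the analogous uniform lemma for muchgales: uniformly in $(l,i,e)$ and $\varepsilon>0$, the set of reals on which the $e$-th $(l,i)$-betting \lce supermartingale succeeds is contained in an \lce open set of measure at most $\varepsilon$. This is essentially Muchnik's paradox generalized to period $l$, whose measure-theoretic content is that a real built by independent uniform draws at the stages $\not\equiv i \pmod l$ (and the kastergale-diagonalizing side-indicator choices elsewhere) almost surely defeats the muchgale. Setting $\varepsilon_e = 2^{-e-2}$, the \lce open set $W = \bigcup_e U_{N_e,\varepsilon_e}$ has measure at most $1/2$, and every $x\notin W$ defeats every kastergale and every muchgale.

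Finally, to secure non-randomness, I would intersect $2^\omega\setminus W$ with the first level $G_1$ of a universal \ml-test, chosen of measure at most, say, $1/4$; since $\mu(2^\omega\setminus W)\geq 1/2$ while $\mu(G_1)$ can be tuned, I can guarantee the overlap is non-empty. Any $x$ in this overlap is non-random (some \lce supermartingale built from $G_1$ succeeds on it) yet escapes every $N_e$. The main obstacle, I expect, is the first step: carefully extracting uniform $\varepsilon$-small defeating sets from the intricate supermartingale-approximation and side-indicator construction of~\S\ref{singsec4}--\S\ref{singsecpartial}, while simultaneously preserving the orthogonal muchgale-diagonalization freedom; one must verify that the two kinds of restrictions (choice of favorable outcome vs. choice of betting stages) interact independently enough that the variance bookkeeping survives their superposition, probably by interleaving which class $N_e$ the construction diagonalizes against at each block of stages.
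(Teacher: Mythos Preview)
Your outline has a fatal gap at the final step. Membership in a single level $G_1$ of a Martin-L\"of test does not make a real non-random; that requires membership in every level. More to the point, the non-random reals form a set of measure zero, so no comparison of $\mu(2^\omega\setminus W)$ with $\mu(G_1)$ can force a non-random real into $2^\omega\setminus W$ (indeed $\mu(2^\omega\setminus W)\geq 1/2$ and $\mu(G_1)\leq 1/4$ do not even guarantee that the two sets meet). Your uniform small-cover claim is in fact \emph{trivially} true for any \lce supermartingale via Ville's inequality: the success set of $M$ lies in the $\Sigma^0_1$ set $\{x:\sup_n M(x\uhr n)\geq M(\emptyset)/\varepsilon\}$, of measure at most $\varepsilon$. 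This uses nothing about sidedness or restricted betting stages, so if your scheme worked it would equally ``prove'' the absurdity that some non-random real defeats every \lce supermartingale. The defeating-sets-plus-measure route cannot succeed; one must actively \emph{build} the non-random real rather than locate it by measure.

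The paper proceeds very differently. In \S\ref{singsecgen} it sets up a single $\mcal{M}$-game framework (Definition~\ref{singdefgen0}) covering any nondecreasing, homogeneous, \shrinkinginvariant, scale-closed $\Pi^0_1$ class of supermartingale-approximations, hence kastergales and muchgales simultaneously. All the reductions of \S\ref{singsec4}--\ref{singsecpartial} (nesting, dynamic goal, restriction rules, reduction to the variance game, and the induction on $k$) use only that each $M_j[t]$ is a nondecreasing supermartingale and therefore transfer verbatim (Claims~\ref{singeqgen1} and~\ref{singgenclaim0}). The only class-specific step is the $k=1$ base case of the variance game: for kastergales this is Strategy~\ref{singstrategy0} in \S\ref{singsec100}; for an $(l,i)$-betting supermartingale Alice simply enumerates $\{\sigma\in 2^n:\sigma(i)=0\}$, which forces $M[t](\emptyset)\geq 1$ while $\mu(A)=1/2<1$. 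The non-random real $x$ then arises exactly as in the proof of Theorem~\ref{singth3} from Lemma~\ref{singlem41}: the strings Alice enumerates form the levels $V_k$ of a Martin-L\"of test with $x=\bigcup_k\sigma_k^*\in\bigcap_k[V_k]$, so non-randomness is produced by the construction itself rather than imposed afterward.
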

Finally we put forth and give strong evidence toward the following
\footnote{A class of left-c.e. supermartingales defines
1-randomness iff
for every non-1-random real $x$, there is some member in the class
succeeding on $x$.}
\begin{equation}\label{singconj0}
\parbox{10.5cm}{{\bf Conjecture:} if a
natural subclass of left-c.e. supermartingales  defines
 1-randomness, then   a single member
of that class can do so.}
\end{equation}
The definition of  ``natural'' will be given
in section \ref{singsecgen}.
The kastergale, hitchgales as well as muchgales are examples of natural subclasses
of \lce\ supermartingales.
A consequence of (\ref{singconj0})
is that \lce supermartingales cannot be reduced to
restricted  supermartingales; that is:  no simpler
\footnote{Here ``simpler" means no single member of the class succeeds on all non-1-random reals.} natural subclass of \lce supermartingales defines 1-randomness.

\subsection{Previous work}
Our irreducibility results are interesting because they only occur in the context of enumerable strategies. Indeed,
every computable martingale is the product of:
\begin{enumerate}[\hspace{0.3cm}$\bullet$]
\item  two computable fixed-outcome martingales
\citep[\S 3]{barmpalias2020monotonous}
\item two computable martingales that bet on alternate stages
\citep[\S 2]{muchnik2009algorithmic}
\end{enumerate}
so in a fully constructive environment, our initial questions (a), (b) have positive answers.
\citet{muchnik2009algorithmic} was the first to demonstrate a negative answer, by
showing that there exists a real $x$ which is unpredictable with respect to \lce supermartingales
that only bet on even stages and those which only bet on odd stages, but is not 1-random, so some
\lce supermartingale succeeds on it. Hence such restricted betting strategies do not
characterize 1-randomness. \citet{stacsBauwens14}  provides expressions of Muchnik's paradox in terms of online complexity.

\citet{barmpalias2020monotonous} examined the case of single-sided strategies, which turned out to be more powerful than
muchgales.  Using different methods, they obtained a weak analogue of Theorem \ref{singth3},
which is restricted to effective countable mixtures of computable martingales; this is  a subclass of \lce
martingales that corresponds to the condition that the wagers are  \lce in a uniform way.
The dependence of their method to the martingale condition and the enumerability of the wagers
is discussed by \citet[\S 5]{barmpalias2020monotonous} where the need for a new method
toward an answer to Kasterman's question is anticipated.
\citet{muchpar22} use different methods to deal with similar questions in terms of fractal dimension.

The problem of restricted betting has also been studied by \citet{schweinsberg_2005}
in a purely probabilistic fashion, in the context of {\em red and black} games of \citet{howgamble}.
A distinguished characteristic of our line of work with the probabilistic tradition is the enumerability of the strategies, which
corresponds to imperfect information during the game. While variance analysis is a standard tool in probability, our
methodology is novel in the study of effective strategies as all of the previous works relied on recursion-theoretic methods.

\subsection{Organization}
The rest of this paper is devoted to proving Theorems \ref{singth3} - \ref{singth5}.
Theorems \ref{singth3},   \ref{singth1}, \ref{singth5} are proved in
\S \ref{singsec4},  \S \ref{singsecpartial},
\S \ref{singsecgen} respectively.
In \S  \ref{singsecgen}, we demonstrate
strong evidence toward  \eqref{singconj0}.
The proof of Theorem \ref{singth3} in \S\ref{singsec4} already
includes the bulk of our method, which is then used for
Theorems \ref{singth3} - \ref{singth5}.
Many lemmas, claims and definitions in \S   \ref{singsecpartial},
are analogues of those in \S  \ref{singsec4}. We repeat them in order to make
it convenient for readers to check the proof.

\subsection{Notation}\label{jKb66xtKze}
We maintain some consistency in our notation, reserving variables
$i, j, k, n, m, t,$ for non-negative integers, $\sigma,\tau,\rho$
for finite strings, $x,y,z$ for reals (infinitely long binary sequence), 
and $A,B$ for sets of reals or strings.

For   $\sigma,\rho\in 2^{<\omega}$:
\begin{itemize}
\item $\rho\preceq\sigma, \sigma\succeq\rho$ denote that $\rho$ is a prefix of $\sigma$;
\item let $[\sigma]^\preceq $ be the set of binary strings extending $\sigma$;  for a set $A\subseteq 2^{<\omega}$, let
$[A]^\preceq := \cup_{\sigma\in A}[\sigma]^\preceq$;
\item let    $[\sigma]:=\{x\in 2^\omega: \sigma\preceq x\}$
and   $[A] = \cup_{\sigma\in A}[\sigma]$;
\item   let $\sigma\uhr n$ denote the string $\sigma(0)\cdots\sigma(n-1)$;
let $\sigma\uhr_m^n$ denote the string $\sigma(m)\sigma(m+1)\cdots\sigma(n)$;
\item let $\emptyset$ also denote the empty string.
\end{itemize}
For a set $A\subseteq 2^{<\omega}$, the measure of $A$,
denoted as $\m(A)$, refers to
the Lebesgue measure of $[A]$;
for $\sigma\in 2^{<\omega}$, we write $\m(\sigma)$ for $\m(\{\sigma\})$;
we write $\m(A|B)$ for $\m(A\cap B)/\m(B)$.
For a vector $\mb{v}\in\mathbb{R}^k$, we use
$||\mb{v}||_p$ to denote the $L^p$-norm of  $\mb{v}$,
namely $\sqrt[p]{\sum_{j<k}|\mb{v}(j)|^p}$.

 For functions $M,\hat M$,
 \begin{itemize}
 \item  we say $\hat M$ \emph{dominates} $M$ on set $A$
 if $\hat M(\sigma)\geq M(\sigma)$ for all $\sigma\in A$;
\item we denote the average of $M$ on a   set $A$ of strings by:
\[
\int_{\sigma\in A}M(\sigma):=\int_A M := \sum_{\sigma\in A}M(\sigma)\cdot \mu(\sigma).
\]
\end{itemize}

\section{Defeat   single-\sided\ supermartingales}
\label{singsec4}

As many assertions in computability theory,
Theorem \ref{singth3} can be seen as a game between two players
(hence called \alice\ and \baby) where
\alice\ constrols a Martin-L\"{o}f test and
\baby\ controls infinitely many
$i$-\sided\ left-c.e. supermartingales.
It is usually very helpful to study the finite version
of such games.
In the game, \alice\ tries to provide
(enumerate) a $\sigma^*$
so that   $\sum_{i<2} M_i(\h\sigma) $ does not reach certain threshold
for all $\h\sigma\preceq\sigma^*$
(where $M_i$ is the left-c.e. $i$-\sided\ supermartingale
controled by \baby).
This gives rise to
the following finite game.
Let $0\leq c \leq d,n\in\omega$.

 \begin{definition}[$(c,d,n)$-\sided-game]
 \label{singdef2}

At each round $t\in\omega$, \alice\ firstly enumerates a $\sigma\in 2^n$
(that has not been enumerated before);
then \baby\ presents $i$-\sided\ supermartingale
$M_i[t]$ (for $i<2$) such that the following hold:
let $\Mb[t] := (M_0[t],M_1[t])$,
\begin{itemize}

\item $||\Mb[t](\h\sigma)||_1\geq d $ for some $\h\sigma\preceq\sigma$;
\item $M_i[t]$ dominates $ M_i[t-1]$ on $2^{\leq n}$
(we set $M_{i}[-1]\equiv 0$).

\end{itemize}
\alice\ wins the game if for some $t$, $||\Mb[t](\emptyset)||_1 \geq  c$.
\end{definition}

We call $\h\sigma$ a \emph{\catchingpoint} of $\sigma$.
Let $A$ be the set of $\sigma$ that \alice\ enumerated
during the game \ref{singdef2}.
We show that  given $c<d$, $\varepsilon>0$, \alice\ can win the $(c,d,n)$-\sided-game \ref{singdef2}
with a cost $\m(A)\leq \varepsilon$ (for some $n$).
By scaling,
\begin{align}\nonumber 
\bullet\ \  \text{$(c,d,n)$-\sided-game is equivalent to $(c\h c, d\h c,n)$-\sided-game
(for all $\h c>0$).}
\end{align}
When $d= 1$ the  $(c,d,n)$-\sided-game
is called  $(c,n)$-\sided-game.
We prove

\begin{lemma}\label{singlem41}
For every  $  c<1, \varepsilon>0$, there is an $n\in\omega$ such that
 \alice\ has a winning strategy  for the
$(c,n)$-\sided-game \ref{singdef2} such that $\m(A)\leq \varepsilon$.
\end{lemma}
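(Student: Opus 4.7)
The plan is to design Alice's adaptive winning strategy that exploits the asymmetry between $0$-sided and $1$-sided supermartingales to force Baby's initial capital far beyond what the measure $\mu(A)$ would naively allow. Indeed, while the ordinary Kraft--Doob bound gives only $M_0(\emptyset)+M_1(\emptyset)\geq \mu([A])$, the single-sided restriction prevents Baby from realising this bound via a uniform martingale whenever $A$ is arranged to be ``balanced,'' opening a gap that the strategy amplifies.

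First, I would establish a basic \emph{pair lemma} for the building block $A_k=\{0^k 1^k,1^k 0^k\}$: using the single-sided upper bounds $M_0(\hat\sigma)\leq 2^{n_0(\hat\sigma)} M_0(\emptyset)$ and $M_1(\hat\sigma)\leq 2^{n_1(\hat\sigma)} M_1(\emptyset)$ together with the supermartingale inequality, solve the resulting linear program to show that any valid $(M_0,M_1)$ covering $A_k$ at its endpoints must satisfy $M_0(\emptyset)+M_1(\emptyset)\geq \Theta(2^{-k})$, while $\mu(A_k)=2^{1-2k}$. This yields a cost-to-measure ratio that grows as $\Theta(2^{k})$, the core amplification mechanism.

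Second, to reach an arbitrary $c<1$ with arbitrarily small $\mu(A)\leq\varepsilon$, Alice plays adaptively. At round $t$ she inspects the current approximation $(M_0[t-1],M_1[t-1])$ and enumerates a new ``balanced'' string $\sigma\in 2^n$ (abundant in $2^n$ by Chernoff) whose every prefix $\hat\sigma$ satisfies $M_0[t-1](\hat\sigma)+M_1[t-1](\hat\sigma)<1$. Baby is then forced to grow the supermartingales to create a fresh catching-point $\hat\sigma\preceq\sigma$; because $\hat\sigma$ is balanced, applying the pair-type analysis locally at $\hat\sigma$ shows the increment to $M_0[t](\emptyset)+M_1[t](\emptyset)$ is $\Omega(2^{-n/2})$. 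After $O(c\cdot 2^{n/2})$ rounds the total capital reaches $c$, while $\mu(A)\leq O(c\cdot 2^{-n/2})\leq \varepsilon$ provided $n$ is chosen sufficiently large.

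The main obstacle is verifying that Baby cannot amortise these increments across rounds, i.e.\ that the per-round cost $\Omega(2^{-n/2})$ genuinely accumulates rather than being collapsed by a single clever supermartingale extension. This is exactly where the variance-based methodology highlighted in the introduction enters: one must bound the variance of the admissible $(M_0,M_1)$ across the catching-points produced by Alice's adaptive choices, and argue that the additive nature of fresh catching events on uncovered balanced strings forces additive contributions to $M_0(\emptyset)+M_1(\emptyset)$. Formalising this additivity -- by showing that Alice's adaptive rule always locates new catching-points in the currently ``uncovered'' region, so that each round corresponds to an independent pair-type amplification -- is the central technical step.
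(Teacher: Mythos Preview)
Your pair lemma for $A_k=\{0^k1^k,1^k0^k\}$ is correct and routine, but the rest of the plan has a genuine gap precisely where you flag the ``main obstacle,'' and your gesture toward variance does not close it.

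The per-round increment $\Omega(2^{-n/2})$ is unjustified and false in general. First, the catching-point $\hat\sigma$ is a \emph{prefix} of a balanced string, not itself balanced, so the bounds $M_i(\hat\sigma)\leq 2^{n_i(\hat\sigma)}M_i(\emptyset)$ do not yield what you claim. More seriously, after several rounds Baby may have arranged $||\Mb[t-1](\hat\sigma)||_1=1-\eta$ at some short prefix $\hat\sigma$; when Alice enumerates a new $\sigma\succ\hat\sigma$ with no catching-point, Baby need only add $\eta$ at $\hat\sigma$, costing $\eta\cdot 2^{-|\hat\sigma|}$ at the root, which can be arbitrarily smaller than $2^{-n/2}$. ``Not yet covered'' is fully compatible with ``almost covered at some prefix,'' so the pair-type amplifications do not act independently across rounds and the additivity you need fails. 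To rescue this you would have to exhibit a specific antichain on which Baby is \emph{forced} into large variance, together with a global bound on that variance; you assert this but supply no mechanism.

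The paper's route is structurally different and sidesteps amortisation entirely. It never sums per-round increments to $||\Mb(\emptyset)||_1$. Instead it (i) nests games to reduce to the regime $c=1-\delta$, $\mu(A)\leq 1-a\delta$; (ii) passes to a \emph{dynamic} goal $1-||\Mb[t](\emptyset)||_1\leq\tfrac{1}{a}(1-\mu(A[t]))$, so that Alice wins the moment any node $\rho$ has $||\Mb[t](\rho)||_1\geq 1$ while $[\rho]^\preceq$ is not yet exhausted; (iii) uses this to impose the restriction $||\Mb[t](\rho)||_1\leq 1+\delta$ everywhere; and (iv) proves the key local estimate that if $||\Mb||_1\geq 1$ on $B_\rho=\{\rho 0,\rho 11\}$ then $1-||\Mb(\rho)||_1\leq C\sqrt{\variance(\Mb\,|\,B_\rho)}$ --- this two-step estimate is the \emph{only} place single-sidedness is used. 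Alice then enumerates along a fixed ternary embedding so that for $\Theta(n)$ many $\rho$ the set $B_\rho$ is enumerated before anything in $[\rho 10]^\preceq$; if no winning attention occurs, each such $\rho$ carries constant variance, contradicting a telescoping bound on the total variance. No Chernoff or long-string balance argument appears.
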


We emphasize that there is no complexity notion involved in
game \ref{singdef2} or Lemma \ref{singlem41}.
The proof of Lemma \ref{singlem41} consists of several separate ideas listed below
(which will consist most of the ingredients needed for Theorem \ref{singth1}).
The first three ingredients reduce Lemma \ref{singlem41} to a weaker
assertion (these three ingredients
reduce the winning strategy  of  game \ref{singdef2} to a series
of other games with each being  easier for \alice\ to win than the previous one).

\begin{enumerate}[1.]
\item {\em Nesting argument.}
This is similar
as  how we apply Lemma \ref{singlem41} iteratively
to prove Theorem  \ref{singth3}.
This, roughly speaking,
allows \alice\ to enumerate much more $\sigma$
in trade of a reasonably stronger goal (i.e., $c$ is
increased reasonably while $\m (A)$ is allowed to be
close to $1$).
The nesting argument reduce
 Lemma \ref{singlem41} to Lemma \ref{singlem42}.
\item {\em Dynamic goal argument.} A dynamic goal version of game \ref{singdef2}
has the same rule as game \ref{singdef2} but a different winning criterion
(of \alice). The dynamic winning criterion allows
\alice\  to achieve a   dynamic goal
 depending on how much resource she cost, namely $\m(A)$
 (the more she cost the better goal she is supposed to achieve).
The argument shows that a winning strategy for the dynamic goal version
of game \ref{singdef2} (namely game \ref{singdef3}) gives rise to a winning strategy of game \ref{singdef2}.
 Thus
 Lemma \ref{singlem42} is subsequently
 reduced to Lemma \ref{singques0}.
\item {\em Imposing restriction on the \sided\ player.}
The dynamic goal
game (game \ref{singdef3}) will allow us to impose further restrictions on \baby's
action: $||\Mb[t](\sigma)||_1\geq 1$
(instead of $||\Mb[t](\h\sigma)||_1\geq 1$ for some $\h\sigma\preceq\sigma$) and
$||\Mb[t](\rho)||_1\leq 1+\delta$ for all $\rho\in 2^{\leq n}$
(where $\delta$ is a pre-chosen small parameter).
These restrictions force that the more capital \baby\
allocates, the more $M_i[t]$  is alike a martingale.
Thus we reduce   game \ref{singdef3}
to its restricted version (game \ref{singdef4}) and the winning strategy
of game \ref{singdef3},
namely Lemma \ref{singques0}, is reduced to
Lemma \ref{singlem43}.

\item A winning strategy for the restricted dynamic game
\ref{singdef4} (proof of Lemma \ref{singlem43}).
\end{enumerate}
The following diagram illustrates the framework of the proof
\footnote{Where  $\psi \overset{\varphi}{\hookrightarrow }
\phi$ means assertion $\varphi$ reduce
assertion $\psi$ to $\phi$. i.e., $\varphi\wedge \phi$ implies $\psi$.}.
\begin{align}\nonumber
 &\text{Lemma \ref{singlem41}}
 \overset{Claim \ref{singclaim1}}{\hookrightarrow }
 \text{Lemma \ref{singlem42}} (\text{more resource allowed,
 \S  \ref{singsec2}});\\ \nonumber
  &
  \text{Lemma \ref{singlem42}}\overset{Claim \ref{singclaim2}}
  {\hookrightarrow }\text{Lemma \ref{singques0}
  (dynamic goal, \S  \ref{singsec10})};
  \\ \nonumber
  &\text{Lemma \ref{singques0}}\overset{Claim \ref{singclaim4}}{\hookrightarrow }
  \text{Lemma \ref{singlem43}
  (imposing restrictions on \baby,
  \S  \ref{singsec1})
  };
  \\ \nonumber
  &\text{Proof of Lemma \ref{singlem43}
  (section \ref{singsec0})}.
\end{align}

Notably, other than the proof of Lemma \ref{singlem43}, none of these
arguments take advantage
of $i$-\sided ness but the mere fact that $M_i[t]$ is supermartingale
and nondecreasing with respect to $t$.
The rest of \S  \ref{singsec4} is devoted to prove Lemma \ref{singlem41}.
Before that, we prove Theorem \ref{singth3}.

\begin{proof}[Proof of Theorem \ref{singth3} using Lemma \ref{singlem41}]
Note that given  $i$-\sided\ left-c.e.    supermartingale $M_i$,
let $\Mb = (M_0,M_1)$,
if  $||\Mb(\emptyset)||_1<c$ and $c<d$, then for some $n\in\omega$,
we can (effectively) play the winning strategy for the $(c,d, n)$-\sided-game \ref{singdef2} on $2^n$
(against $M_0,M_1$)
to computably enumerate a sequence
  $V=(\sigma[0],\cdots,\sigma[s-1])$ of strings
  (with $\m(V)\leq 1/2$) such that let $\sigma^*=\sigma[s-1]$,
we have $||\Mb(\h\sigma)||_1\leq d$ for all $\h\sigma\preceq\sigma^*$.
This is clearly done by taking $M_i[t]$ as the $i$-\sided\ supermartingale presented
by \baby. We refer to this process as  \emph{controlling
 $M_0, M_1$ on $2^{[0,n]}$} and $\sigma^*$ as a \emph{\winningstring}.
Roughly speaking,  the non-1-random real $x$ is produced as follows.

\begin{enumerate}[1.]
\item On $2^{n_0}$, play the winning strategy for  \sided-game
\ref{singdef2} to control $M_0,M_1$ on $2^{[0,n_0]}$.

\item Upon obtaining $\sigma^*_0$, play the winning
strategy for the \sided-game \ref{singdef2} (on $[\sigma^*_0]^\preceq\cap 2^{n_0+n_1}$) to
control $M_0+\h M_0, M_1+\h M_1$ on $2^{[n_0,n_1]}$ and so forth.
Where $\h M_i$ is (some  scaling) of another $i$-\sided\ left-c.e. supermartingale.
\end{enumerate}
In the end, the non-1-random $x$ will be $\cup_k\sigma_k^*$.
Of course, when the winning strategy tells us to enumerate $\sigma\in 2^{n_0}$,
we have no idea whether $\sigma$ is a \winningstring.
We will simply take $\sigma$ as if it is (unless found otherwise later)
and keep doing the above process (item (2)) over $\sigma$. If at some point it is found that
$\sigma$ is not (a \winningstring), then go back to the \sided-game \ref{singdef2} on $2^{n_0}$
(give up whatever is done over $\sigma$),
 obtain (invoking the winning strategy) the next $\t \sigma\in 2^{n_0}$ and proceed with
$\t\sigma$.
The more concrete argument goes as follows.

Let  $n_0,n_1,\dots$,
$0<c_0< d_0<c_1<\cdots <2 $ be
computable sequences such that:
\begin{align}\nonumber
\bullet\ \ &\text{there is a winning strategy of \alice\ for the
$(c_k, d_k, n_k)$-\sided-game }\\ \nonumber
&\text{such that $\m(A)\leq 1/2$ for all $k$.}
\end{align}
Let $M_{i,0},M_{i,1},\cdots$ be an effective list of all  $i$-\sided\ left-c.e.   supermartingale;
let $\Mb_k = (M_{0,k},M_{1,k})$.
For convenience, suppose
$$||\Mb_{k}(\emptyset)||_1\leq \delta_k
\text{ for all $k\in\omega,i\in2$}.
$$
where $(\delta_k\in \mathbb{Q}:k\in\omega)$ is a computable sequence so that each is sufficiently small
(to  be specified).

We will define a sequence $(V_k\subseteq 2^{n_0+\cdots+ n_k}: k\in\omega)$ of uniformly
c.e. sets with $\m(V_k)\leq 2^{-k}$;
 and the non-1-random real $x$ will be (the unique) element of $\cap_k [V_k]$.
To make sure that no $i$-\sided\ supermartingale succeeds on $x$, we satisfy the requirement:
\begin{align}\label{singeqproofth1}
\sum_{k\in\omega }  ||\Mb_{k}(x\uhr m)||_1<2 \text{ for all }m\in\omega.
\end{align}

\noindent {\bf Construction.}
\begin{enumerate}[1.]
\item On $2^{n_0}$, play the winning strategy
for the $(  c_0,d_0,n_0)$-\sided-game \ref{singdef2}
to control $\Mb_0$ (namely $||\Mb_0(\h\sigma)||_1\leq d_0$
for all $ \h\sigma\preceq\sigma^*$). The set $V_0$ consists of the strings enumerated during
this game.
\item When the strategy tells us to enumerate $\sigma$, take $\sigma$ as if it is
a \winningstring. Then, on $[\sigma]^\preceq\cap 2^{n_0+n_1}$,
play the winning strategy for the $(  c_1,d_1,n_1)$-\sided-game \ref{singdef2}
to control $\sum_{k<2} \Mb_k$
(namely $\sum_{k<2} ||\Mb_{k}(\h\sigma)||_1\leq d_1$
for all $\sigma\preceq\h\sigma\preceq\sigma^*$). The set $V_1$ consists of strings in $2^{n_0+ n_1}$
enumerated during the above game.
\item If at some point it is found that $\sigma\in 2^{n_0}$ is not
a \winningstring, then go back to the game on $2^{n_0}$, obtain the next string $\t \sigma$ by the winning
strategy and proceed with $\t\sigma$ (instead of $\sigma$) as in step (2).
\item Similarly for $V_2,V_3,\cdots$.
\end{enumerate}
Note that if $\sigma_0^*$ is a \winningstring\ on $2^{n_0}$,
then it follows that $||\Mb_{0}(\sigma_0^*)||_1\leq d_0$.
Since $\delta_1$ is sufficiently small,
we have $$\sum_{k<2} ||\Mb_{k}(\sigma_0^*)||_1<c_1.$$
Thus the game   on $[\sigma_0^*]^\preceq\cap 2^{n_0+n_1}$
will produce a $\sigma_1^*\in [\sigma_0^*]^\preceq\cap 2^{n_0+n_1}$ such that
 $$\sum_{k<2 }  ||\Mb_{k}(\h\sigma)||_1\leq d_1$$
 for all $\sigma_0^*\preceq\h\sigma\preceq\sigma_1^*$.
 In other words, each $\sigma_k^*$ exist, verifying (\ref{singeqproofth1}).
On the other hand, it's clear that $\m(V_k) \leq 2^{-k}$.
\end{proof}

\subsection{Nesting}\label{singsec2}
First we notice a nesting property of
the $(c,n)$-\sided-game \ref{singdef2}.
\begin{claim}\label{singclaim0}
Suppose \alice\ has a winning strategy for $(c_j,n_j)$-\sided-game \ref{singdef2} such that
$\m(A)\leq \varepsilon_j$
for each $j<2$.
Then \alice\ has a winning strategy for $(c_0c_1,n_0+n_1)$-\sided-game \ref{singdef2}
such that $\m(A)\leq \varepsilon_0\varepsilon_1$.
\end{claim}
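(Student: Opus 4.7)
The plan is a standard nesting argument: Alice plays the outer $(c_0,n_0)$-strategy on the top $n_0$ levels of $2^{n_0+n_1}$ and, for each $\tau\in 2^{n_0}$ proposed by the outer strategy, she runs a fresh inner $(c_1,n_1)$-strategy on the subtree rooted at $\tau$. By the scaling equivalence noted just before the claim, the outer strategy may be viewed as a winning strategy for the $(c_0c_1,c_1,n_0)$-sided-game with the same measure bound $\varepsilon_0$; this normalization is chosen so that the output of each inner game, which guarantees $\|\Mb(\tau)\|_1\ge c_1$, is exactly what the outer game expects as a Baby-move at $\tau$.

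Concretely, when the outer strategy tells Alice to enumerate $\tau$, Alice launches an instance $\mathcal{S}_1^{\tau}$ of the inner strategy on $[\tau]^{\preceq}\cap 2^{n_0+n_1}$. The inner strategy outputs strings $\sigma$, which Alice then enumerates in the big game. Baby responds with $\Mb[t]$ together with some witness $\hat\sigma\preceq\sigma$ satisfying $\|\Mb[t](\hat\sigma)\|_1\ge 1$. If $|\hat\sigma|\le n_0$, then $\hat\sigma\preceq\tau$ and, since $1\ge c_1$, this constitutes a legal Baby-move at $\tau$ for the scaled outer game, so Alice routes it there. Otherwise $\tau\prec\hat\sigma$, and the restriction of $\Mb[t]$ to the subtree rooted at $\tau$ is a legal Baby-move for $\mathcal{S}_1^{\tau}$, so Alice routes it there. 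Once $\mathcal{S}_1^{\tau}$ reaches its winning threshold $\|\Mb[t](\tau)\|_1\ge c_1$, Alice forwards this as the outer strategy's Baby-response to $\tau$; the outer strategy then either proposes a new $\tau'$, at which point a fresh $\mathcal{S}_1^{\tau'}$ is spawned and the old inner state is abandoned, or else announces victory with $\|\Mb[t](\emptyset)\|_1\ge c_0c_1$, as required.

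For the measure bound, each inner instance $\mathcal{S}_1^{\tau}$ enumerates a set of big-game strings of measure at most $\m(\tau)\cdot\varepsilon_1$, since it enumerates a subset of $[\tau]^{\preceq}\cap 2^{n_0+n_1}$ of relative density at most $\varepsilon_1$ inside $[\tau]^{\preceq}$. Summing over the $\tau$ proposed by the outer strategy and using the bound $\sum_\tau \m(\tau)\le \varepsilon_0$ provided by that strategy, the total measure of the set $A$ of strings Alice enumerates in the big game is at most $\varepsilon_0\varepsilon_1$.

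The main obstacle is purely organizational: lining up big-game rounds with rounds of the outer and inner strategies, and verifying that each routing is a legal move in its sub-game. Since the supermartingale inequality, the $i$-sidedness condition, and monotonicity in $t$ all pass through restriction to subtrees unchanged, and since $c_1\le 1$ makes the routing of the $|\hat\sigma|\le n_0$ case valid, the verifications are routine, and no analytic difficulty arises beyond what is already encapsulated in the two assumed strategies.
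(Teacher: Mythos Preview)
Your proposal is correct and follows essentially the same nesting argument as the paper: rescale the outer strategy to a $(c_0c_1,c_1,n_0)$-strategy, and for each $\tau$ it proposes, run the inner $(c_1,n_1)$-strategy on $[\tau]^\preceq\cap 2^{n_0+n_1}$, routing a catching point $\hat\sigma\preceq\tau$ directly to the outer game and otherwise waiting until the inner game forces $\|\Mb[t](\tau)\|_1\ge c_1$. Your case split on $|\hat\sigma|\le n_0$ versus $\tau\prec\hat\sigma$ is exactly the paper's dichotomy, and the measure bound $\sum_\tau \mu(\tau)\varepsilon_1\le \varepsilon_0\varepsilon_1$ is identical.
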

\begin{proof}
In order to win the
$(c_0c_1,n_0+n_1)$-\sided-game we nest $(c_1,n_1)$-strategy into the
$(c_0c_1, c_1,n_0)$-strategy
\footnote{
We call the winning strategy for $(c_j,n_j)$-\sided-game with
$\m(A)\leq \varepsilon_j$ as $(c_j,n_j)$-strategy.
The winning strategy for $(c_0c_1, c_1,n_0)$-\sided-game
(with $\m(A)\leq \varepsilon_0$) exists since by scaling,
$(c_0c_1, c_1,n_0)$-\sided-game is equivalent to $(c_0,n_0)$-\sided-game.}.
For each $\rho\in 2^n$
let $(c_1,n_1)^{\rho}$-\sided-game denote the shift of $(c_1,n_1)$-\sided-game by $\rho$, in the sense that
each string $\sigma$ in the  $(c_1,n_1)$-\sided-game is replaced by $\rho\ast \sigma$.
Let's see what happens
if \alice\ plays the  $(c_1,n_1)^{\rho}$-strategy
(during a $(c_0c_1,n_0+n_1)$-\sided-game).
\begin{itemize}
\item If for each $\sigma\in [\rho]^\preceq\cap 2^{n_0+n_1}$ \alice\ enumerates,
$\sigma$ admits a \catchingpoint\ $\h\sigma$ (by the round $\sigma$ is enumerated)
such that $\h\sigma\succeq\rho$,
then the $(c_1,n_1)^{\rho}$-strategy will forces
\footnote{In case of any doubt, here ``a strategy forces $\varphi$" does not have anything to do with
the forcing notion in logic. It simply means the strategy will ensure that at some round $t$, the state of the
game will satisfy $\varphi$.}
 that at some round $t$, $||\Mb[t](\rho)||_1\geq c_1$.
\item Otherwise, for some $\sigma$ \alice\ enumerates,
the \catchingpoint\ satisfies $\h\sigma\preceq\rho$.
\end{itemize}
Which ever is the case,  we have
\begin{align}
\label{singeqiter0}
&||\Mb[t](\h\rho)||_1\geq c_1\text{ for some $\h\rho\preceq\rho$ }\\ \nonumber
&\text{ while the cost satisfies }
\m(A_\rho)\leq \varepsilon_1\m(\rho).
\end{align}
The strategy for $(c_0c_1,n_0+n_1)$-\sided-game is to
play $(c_0 c_1, c_1, n_0)$-\sided-game (at level $n_0$), but each time an enumeration $\sigma\in 2^{n_0}$
is instructed by the strategy, \alice\ instead play and win the shifted  $(c_1,n_1)^{\sigma}$-\sided-game, which we may view as a
sub-game. Note that the outcome $||\Mb(\h \rho)||_1\geq c_1$
for some $\h\rho\preceq\rho$ of the sub-game $(c_1,n_1)^{\rho}$ that is won  (see (\ref{singeqiter0}))
corresponds to a valid response by   Baby  on the {\em fictitious} move $\rho$  in $(c_0 c_1, c_1, n_0)$-\sided-game by   Alice.
Since the strategy for $(c_0c_1,c_1,n_0)$-\sided-game that we simulate is winning, it follows that the nested strategy is a
winning strategy for $(c_0c_1,c_1,n_0)$-\sided-game.
If $A_0\subseteq 2^{n_0}$ is the set of strings corresponding to the ficticious moves by Alice in $(c_0c_1,c_1,n_0)$-\sided-game, then
by (\ref{singeqiter0}), the cost of the nested strategy is at most
$\varepsilon_1\cdot \mu(A_0)\leq  \varepsilon_0\varepsilon_1$.
\end{proof}
\begin{claim}\label{singclaim1}
Suppose for every $a>0$, there is a  $\delta(a)>0$ (depending on $a$)
such that for every $0<\delta<\delta(a)$, there is an $n\in\omega$ such that \alice\ has a winning strategy for
  $(1-\delta,n)$-\sided-game \ref{singdef2}
such that $\m(A)\leq 1-a\delta$.
Then for every $ c<1, \varepsilon>0$, there is an $  n\in\omega $ such that
\alice\ has a winning strategy for   $(c,n)$-\sided-game \ref{singdef2} such that
$\m(A)\leq  \varepsilon$.
\end{claim}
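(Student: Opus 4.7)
The plan is to apply Claim \ref{singclaim0} repeatedly to the weak winning strategy supplied by the hypothesis, so that both the capital threshold and the cost shrink geometrically, with the cost shrinking strictly faster provided $a$ is chosen large. For any fixed $a>0$ and $0<\delta<\delta(a)$, the hypothesis furnishes some $n_0\in\omega$ and a winning strategy for the $(1-\delta,n_0)$-\sided-game at cost $1-a\delta$. A straightforward induction on $k$, pairing the current $(k-1)$-fold nested strategy with the original via Claim \ref{singclaim0}, yields a winning strategy for the $\bigl((1-\delta)^k,\,kn_0\bigr)$-\sided-game at cost $(1-a\delta)^k$. It then suffices to pick $a,\delta,k$ satisfying $(1-\delta)^k\geq c$ and $(1-a\delta)^k\leq\varepsilon$ simultaneously.

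Given $c<1$ and $\varepsilon>0$, I would fix the parameters in this order. First pick any $a>|\log\varepsilon|/(1-c)$, so that $|\log\varepsilon|/a<1-c$; invoke the hypothesis to obtain $\delta(a)$; then choose $\delta\in(0,\delta(a))$ also satisfying $\delta<(1-c)-|\log\varepsilon|/a$. For this $\delta$ the interval $\bigl[|\log\varepsilon|/(a\delta),\,(1-c)/\delta\bigr]$ has length $\bigl((1-c)-|\log\varepsilon|/a\bigr)/\delta>1$, hence contains an integer $k$; invoke the hypothesis once more to obtain the corresponding $n_0$ and the winning $(1-\delta,n_0)$-strategy.

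To verify the bounds, since $k\delta\leq 1-c$, Bernoulli's inequality gives $(1-\delta)^k\geq 1-k\delta\geq c$, so the $k$-fold nested strategy is automatically a winning strategy for the weaker $(c,kn_0)$-\sided-game (any capital at least $(1-\delta)^k$ is a fortiori at least $c$). Since $k\delta\geq |\log\varepsilon|/a$, the bound $1-x\leq e^{-x}$ gives $(1-a\delta)^k\leq e^{-ka\delta}\leq e^{-|\log\varepsilon|}=\varepsilon$. Setting $n=kn_0$ then completes the argument.

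The only delicate point is the joint calibration forcing an integer $k$ into the critical window $[|\log\varepsilon|/(a\delta),(1-c)/\delta]$. This becomes transparent once $a$ is fixed first purely as a function of $c,\varepsilon$ and $\delta$ is chosen small accordingly, because the window's length then scales as $1/\delta$ while its endpoints, measured in units of $1/\delta$, depend only on the prechosen constants $a,c,\varepsilon$; everything else is routine application of the nesting lemma plus elementary estimates.
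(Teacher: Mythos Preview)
Your argument is correct and follows the same approach as the paper: iterate Claim~\ref{singclaim0} $k$ times on a single $(1-\delta,n_0)$-strategy with cost $1-a\delta$, then calibrate $a,\delta,k$ so that $(1-\delta)^k\geq c$ and $(1-a\delta)^k\leq\varepsilon$. The paper simply asserts that such $a,\delta,k$ exist, whereas you supply explicit choices via Bernoulli's inequality and $1-x\leq e^{-x}$; the two proofs are otherwise identical.
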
\begin{proof}
Fix $ c<1$. Let $a,\delta>0$ be such that
for some $k\in\omega$
\begin{align}\nonumber
&(1-a\delta)^k \leq \varepsilon,
(1-\delta)^k\geq c\text{ and let $n\in\omega$ so that }\\ \nonumber
&\text{\alice\ has a winning strategy
for $(1-\delta,n)$-\sided-game \ref{singdef2} }
\\ \nonumber
&\text{such that  $\m(A)\leq 1-a\delta$.}
\end{align}
Now the conclusion follows by applying Claim \ref{singclaim0} $k$ times.
\end{proof}
By  Claim \ref{singclaim1},
to prove Lemma \ref{singlem41}, it suffices to prove the following.

\begin{lemma}\label{singlem42}
For every $a>0$, there is a  $\delta(a)>0$ (depending on $a$)
such that for every $0<\delta<\delta(a)$,
 there is an $n\in\omega$
such that \alice\ has a winning strategy for
$(1-\delta,n)$-\sided-game \ref{singdef2}
such that $\m(A)\leq 1-a\delta$.
\end{lemma}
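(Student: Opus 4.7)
Following the reduction diagram in the excerpt, my strategy is to reduce Lemma~\ref{singlem42} first to the dynamic-goal game of Lemma~\ref{singques0}, then to the restricted dynamic game of Lemma~\ref{singlem43}, and finally to supply a direct winning strategy for this restricted game. Together with the two forthcoming reduction claims (Claim~\ref{singclaim2} and Claim~\ref{singclaim4}), this establishes Lemma~\ref{singlem42}. The first two steps are structural; the combinatorial content is concentrated in the third step, where the one-sidedness hypothesis on $M_0,M_1$ is used for the first time.

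\textbf{Step 1 (dynamic goal).} I would replace the fixed threshold $1-\delta$ in Definition~\ref{singdef2} by a target $g(\m(A_t))$ that grows with Alice's current expenditure, and show that a winning strategy for the dynamic game yields a winning strategy for Lemma~\ref{singlem42} by an amortisation argument: as long as $g(\m(A_t))$ has not reached $1-\delta$, the expenditure $\m(A_t)$ must still be below $1-a\delta$, so the threshold $1-\delta$ is inevitably attained before the resource cap is violated. The function $g$ is chosen so that the ``bang per buck" improves smoothly as Alice spends more measure, and the slope at $0$ is calibrated by the free parameter $a$.

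\textbf{Step 2 (imposing restrictions on Baby).} Within the dynamic game I would further require that $1\le \|\Mb[t](\rho)\|_1\le 1+\delta$ on every $\rho\in 2^{\le n}$ in the tree under scrutiny, and that the catching point of each enumerated $\sigma$ is $\sigma$ itself. Both constraints are imposed in Alice's favour: if Baby ever lets $\|\Mb[t](\rho)\|_1$ exceed $1+\delta$ on an internal $\rho$, Alice already achieves an outright win because the one-sidedness forces this mass to be inherited up toward $\emptyset$; and if the catching point is a strict prefix $\h\sigma\prec\sigma$, Alice may absorb the entire subtree $[\h\sigma]\cap 2^n$ into $A$ at the cost of a single Baby-move, which is discounted against the dynamic goal. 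Under these constraints $M_0$ and $M_1$ are forced to behave like near-martingales.

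\textbf{Step 3 (winning the restricted dynamic game).} This is the combinatorial core. The one-sidedness of $M_i$ yields a definite bias at every node: $M_0(\sigma\ast 0)\ge M_0(\sigma\ast 1)$, and symmetrically for $M_1$. Combined with the near-martingale condition from Step~2, the per-level conditional variance of $\|\Mb[t]\|_1$ is bounded below by a constant multiple of $\delta$ at each level, so it accumulates to order $n\delta$ after $n$ levels. Alice's strategy is to enumerate at each round a string $\sigma\in 2^n$ on which $\|\Mb[t](\sigma)\|_1$ is currently below average: this costs comparatively little measure yet forces Baby, in satisfying $\|\Mb[t](\sigma)\|_1\ge 1$, to raise the value at $\emptyset$ by a controlled amount. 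Iterating with $n$ of order $1/\delta$ produces exactly the trade-off between $c=1-\delta$ and $\m(A)\le 1-a\delta$ demanded by the lemma. The main obstacle, and what separates this argument from the standard probabilistic analysis of fair betting, is that Baby may defer its choice of side for $M_i$ at any node and reapportion capital between $M_0$ and $M_1$ adaptively after seeing Alice's enumerations; hence the variance lower bound must be established uniformly over all such adaptive sidedness-and-mass schemes, and this is where the bulk of the technical work will lie.
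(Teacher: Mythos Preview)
Your three-step skeleton (dynamic goal $\to$ restricted dynamic $\to$ direct win) matches the paper exactly, and since you explicitly defer Steps~1 and~2 to Claims~\ref{singclaim2} and~\ref{singclaim4} I will not quibble with those sketches---except to flag that your Step~2 explanation is off: the paper stresses (Remark~\ref{singrem2}) that imposing the two restriction rules uses \emph{only} the supermartingale and monotonicity properties, not sidedness. Your appeal to ``one-sidedness forces this mass to be inherited up toward $\emptyset$'' is not the mechanism; rather, if $\|\Mb[t](\rho)\|_1>1$ while some string in $[\rho]^\preceq$ is still unreserved, then $\rho$ trivially satisfies the dynamic winning criterion (\ref{singeq11}) and Alice finishes by dumping $2^n\setminus[\rho]^\preceq$.

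The genuine gap is Step~3. First, you analyse the variance of the \emph{scalar} $\|\Mb[t]\|_1$, but under restriction rule~II this quantity is pinned to $[1,1+\delta]$ on enumerated leaves, so its variance is $O(\delta^2)$ and cannot be bounded \emph{below} by a multiple of $\delta$; the paper instead tracks the variance of the \emph{vector} $\Mb=(M_0,M_1)$, which can be large even when $\|\Mb\|_1$ is constant. Second, your adaptive ``enumerate a below-average leaf'' strategy does not force anything at the root: raising $M_i$ at a leaf of a supermartingale need not raise $M_i(\emptyset)$, and each leaf costs the same $2^{-n}$ regardless of its current value. The paper's strategy is not adaptive at all: Alice enumerates $e(3^{n/2})\subset 2^n$ in a \emph{fixed} lexicographic order chosen so that for every $\rho$ in the range of $e$, there is a round at which $B_\rho=\{\rho 0,\rho 11\}$ has been enumerated while $\tau_\rho=\rho 10$ has not. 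Sidedness then gives $M_i(\rho)\geq\min\{M_i(\rho 0),M_i(\rho 11)\}$ (Claim~\ref{singclaim83}), so either $\rho$ receives winning attention or Baby is forced to place a \emph{constant} (depending on $a$, not $\delta$) amount of vector-variance on $B_\rho$. Summed over $\Theta(n)$ such $\rho$ this contradicts the global variance bound of Claim~\ref{singclaim5}. Finally, your worry that ``Baby may defer its choice of side for $M_i$'' does not arise here: in the single-sided game $M_0$ is $0$-sided and $M_1$ is $1$-sided at every node from the outset; deferral is the kastergale issue of \S\ref{singsecpartial}.
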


\subsection{Dynamic goal}
\label{singsec10}
For technical reason,
we consider the following game
which   allows
 \alice\ to achieve a dynamic goal
 depending on the resource she cost
 (the more she cost the better goal she is supposed to achieve).
Let $a>0, n\in\omega$.

 \begin{definition}[Dynamic $(a,n)$-\sided-game]
 \label{singdef3}

At each round $t\in\omega$, \alice\ firstly enumerates a $\sigma\in 2^n$
(that has not been enumerated before);
then \baby\ presents $i$-\sided\ supermartingale
$M_i[t]$ such that the following hold:
\begin{itemize}
\item $||\Mb[t](\h\sigma)||_1\geq 1 $ for some $\h\sigma\preceq\sigma$;
\item $M_i[t]$ dominates $ M_i[t-1]$ on $2^{\leq n}$.
\end{itemize}
Let $A[t]$ denote the set of $\sigma$ \alice\ enumerated by round $t$.
\alice\ wins the game if for some $t$, $1-||\Mb[t](\emptyset)||_1
\leq \frac{1}{a}(1-\m(A[t]))$.
\end{definition}
In case of any confusion, we emphasize that
$1-||\Mb[t](\emptyset)||_1\leq 0, \m(A[t])<1$ satisfies
the winning criterion of game \ref{singdef3}.
Clearly, dynamic game \ref{singdef3} has the same rule as
game \ref{singdef2} but a different winning criterion.

\begin{claim}\label{singclaim2}
Let $a,\varepsilon>0, \h n\in\omega$. Suppose
\alice\ has a winning strategy for the dynamic
$(a,\h n)$-\sided-game \ref{singdef3}
such that $\m(A)\leq 1-\varepsilon$.
Then for every $0<\delta\leq \varepsilon/2a$,
there is an $n\in\omega$ such that
\alice\ has a winning strategy for the
$(1-2\delta,n)$-\sided-game \ref{singdef2} such that
$\m(A)\leq 1-a\delta$.
\end{claim}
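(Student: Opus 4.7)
The plan is to play the dynamic winning strategy $S$ at depth $\h n$ and convert its outcome into a winning strategy for the $(1-2\delta,n)$-sided-game by case analysis at the first round $t^*$ where the dynamic criterion holds: $a(1-\|\Mb[t^*](\emptyset)\|_1)+\m(A[t^*])\leq 1$, with $\m(A[t^*])\leq 1-\varepsilon$.

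I would split on the value of $\|\Mb[t^*](\emptyset)\|_1$. In the easy case $\|\Mb[t^*](\emptyset)\|_1\geq 1-2\delta$, Alice has already won the $(1-2\delta,n)$-game at round $t^*$, and the cost bound follows from $\m(A[t^*])\leq 1-\varepsilon\leq 1-2a\delta\leq 1-a\delta$, using the hypothesis $\delta\leq\varepsilon/(2a)$.

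In the hard case $\|\Mb[t^*](\emptyset)\|_1<1-2\delta$, the dynamic criterion forces $\m(A[t^*])<1-2a\delta$, leaving Alice with at least $a\delta$ of unused enumeration budget. I would then continue by iteratively re-applying $S$ on subtrees rooted at uncovered strings $\rho\in 2^{\h n}\setminus A[t^*]$, working on a tree of depth $n=k\h n$ for $k$ sufficiently large. Each inner application of $S$ on a subtree $[\rho]^\preceq$ forces $\|\Mb(\rho)\|_1$ above a dynamic threshold determined by the inner cost; these local increases then propagate via the supermartingale inequality to raise $\|\Mb(\emptyset)\|_1$, while the cumulative cost is bounded by a geometric expression in $1-\varepsilon$ and the measure of the uncovered subtrees.

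The hard part will be managing this nesting: each inner win of $S$ at $[\rho]^\preceq$ yields only $\|\Mb(\rho)\|_1\geq 1-(1-c_\rho)/a<1$, which is below the outer catching threshold of $1$, so one must interpret the nested wins as partial contributions to $\|\Mb(\emptyset)\|_1$ and set up the scaling carefully. The arithmetic must balance the cumulative inner costs against the desired deficit of $2\delta$, and the hypothesis $\delta\leq\varepsilon/(2a)$ should provide exactly the slack needed for the bounds to close.
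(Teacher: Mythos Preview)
Your plan diverges from the paper's argument, and the divergence is at the crucial step. The paper does \emph{not} play $S$ once on $2^{\h n}$ and then recurse downward on the leftover nodes. Instead it fixes a fine top level $\t n$ with $2^{-\t n}<a\delta$, sets $n=\t n+\h n$, and plays the dynamic $(a,\h n)$-strategy \emph{in parallel} on the shifted subtrees $[\rho]^\preceq\cap 2^{n}$ for $\rho\in 2^{\t n}$, one $\rho$ after another. On each processed $\rho$ the sub-game either produces a catching point $\h\rho\preceq\rho$ or forces $1-\|\Mb(\rho)\|_1\leq\frac{1}{a}(1-\m(\t A_\rho|\rho))$. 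Alice keeps processing new $\rho$'s until the accumulated ``savings'' $\int_{\rho\in\t A[\t t]}(1-\m(\t A_\rho|\rho))$ first reaches $a\delta$; since each $\rho$ contributes at most $2^{-\t n}<a\delta$, the savings lands in $[a\delta,2a\delta]$. She then brute-force enumerates every remaining $\rho\in 2^{\t n}\setminus\t A[\t t]$, so those $\rho$ acquire catching points and contribute zero to the deficit. The supermartingale inequality then gives $1-\|\Mb(\emptyset)\|_1\leq\frac{1}{a}\cdot 2a\delta=2\delta$, while the unenumerated measure equals the savings $\geq a\delta$. The hypothesis $\delta\leq\varepsilon/2a$ is used only to guarantee the stopping time $\t t$ exists (each sub-game leaves $\geq\varepsilon\geq 2a\delta$ relative savings).

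Your downward-nesting scheme lacks the two ingredients that make this work: a \emph{precise stopping rule} that pins the total savings into a narrow window, and a \emph{fill-in step} that zeroes out the deficit contribution from unprocessed regions. Without the fill-in, the strings in $A[t^*]$ from level~1 only have catching points somewhere above them, not values $\geq 1$ at level $\h n$, so you cannot simply average $\|\Mb(\rho)\|_1$ over $2^{\h n}$ to bound $\|\Mb(\emptyset)\|_1$ after the inner rounds. And iterating $S$ on the uncovered $\rho$'s gives you dynamic wins $\|\Mb(\rho)\|_1\geq 1-(1-c_\rho)/a$ whose deficits you have no mechanism to drive to a prescribed total; the ``geometric expression in $1-\varepsilon$'' you allude to would control the \emph{uncovered measure}, not the ratio of deficit to uncovered measure, which is what the claim requires. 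The paper's parallel-then-fill construction is exactly the missing idea.
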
\begin{proof}
Let $\t n$ be sufficiently large, say $2^{-\t n}<a\delta$.
Rougly speaking, the winning strategy goes like this.
\alice\ play the winning strategy of the dynamic game \ref{singdef3}
over each $\rho\in 2^{\t n}$ (while monitoring the cost  $\m(\t A_\rho)$
over each $\rho$) until $\int_{\t A[\t t]}(1-\m(\t A_\rho|\rho))\approx a\delta$
(where $\t A[t]$ is the set of $\rho\in 2^{\t n}$ over which the strategy has been played by round $t$).
Then \alice\ simply enumerates $2^{\t n}\setminus \t A[\t t]$.
If for each $\rho\in \t A[t]$, the dynamic winning criterion is reached, then
$$1-||\Mb[t](\emptyset)||_1
\leq \int_{\rho\in \t A[\t t]}(1-|| \Mb[t](\rho)||_1)\leq\int_{\rho\in \t A[\t t]}\frac{1}{a}(1-\m(\t A_\rho|\rho))  \lessapprox \delta.$$

A small difficulty is that it's not necessary that
for each  $\rho\in \t A[t]$, the dynamic winning criterion is reached
since some string over $\rho$ \alice\ enumerates may admit \catchingpoint\
that is a prefix of $\rho$ (i.e., the action of \baby\ may not
be a valid action with respect to the sub-game over $\rho$).
To overcome this, note that
(like we argued in the proof of Claim \ref{singclaim0}),
for each $\rho\in 2^{\t n}$, \alice\ could play
the winning strategy   for the dynamic $(a,\h n)$-\sided-game
\ref{singdef3}
(on $[\rho]^\preceq\cap 2^{\t n+\h n}$)
so as to force
\begin{align}\label{singeqdyn2}
&\text{either $\rho$ admits a \catchingpoint, namely}\\ \nonumber
&\hspace{1cm}\text{$|| \Mb[t](\h\rho)||_1\geq 1$ for some $\h\rho\preceq\rho$;} \\ \label{singeqdyn3}
&\text{or $1-||\Mb[t](\rho)||_1\leq \frac{1}{a}(1-\m(\t A_\rho|\rho)) $,}
\end{align}
where $\t A_\rho$ is the set of strings \alice\ enumerated on the game over $\rho$.

We design the following strategy of \alice\ for
the $(1-2\delta,\t n+\h n)$-\sided-game \ref{singdef2}.
\begin{enumerate}[1.]
\item Until certain criterion (namely (\ref{singeq2})) is reached,
for each $\rho\in 2^{\t n}$ (in whatever order):
  play
the winning strategy for the dynamic $(a,\h n)$-\sided-game
\ref{singdef3}
(on $[\rho]^\preceq\cap 2^{\t n+\h n}$)
so as to force either (\ref{singeqdyn2}) or (\ref{singeqdyn3})
\footnote{For example, if a \catchingpoint\ of $\rho$ already exist
(i.e., (\ref{singeqdyn2}) is already true),
then \alice\ does nothing over $\rho$ and  $\t A_\rho=\emptyset$.}.
\item Let $\t A[t]$ denote the set of $\rho\in 2^{\t n}$ that the
above action has been done with respect to $\rho$ (by round $t$).
\alice\ do the above action to each $\rho\in 2^{\t n}$ until
at some round $\t t$,
\begin{align}\label{singeq2}
 \int_{\rho\in \t A[\t t]}\big(1-\m(\t A_\rho|\rho)\big)\geq a\delta
\end{align}
\item Now (right after the round $\t t $)
 \alice\ simply enumerates $ 2^{\t n}\setminus \t
A[\t t]$
(enumerate $B$ means enumerate every elements in $[B]^\preceq\cap 2^{\t n+\h n}$).
\end{enumerate}
Clearly, the round $\t t$ must exist since,
by the hypothesis of this claim,
$1-\m(\t A_\rho|\rho)\geq \varepsilon\geq a\delta$.
Since $\t n$ is sufficiently large and
during each sub-game, $\int_{\rho\in \t A[t]}(1-\m(\t A_\rho|\rho))$
increases at most $2^{-\t n}$, we may assume
\begin{align}\label{singeq1}
\int_{\rho\in \t A[\t t]}\big(1-\m(\t A_\rho|\rho)\big)\leq 2a\delta.
\end{align}
By observation (\ref{singeqdyn2}), (\ref{singeqdyn3})
(and the nondecreasing, supermartingale of $M_i[t]$,
let $t$ be the round
everything is done),
   actions (1), (3) force that
 \begin{align}\nonumber
\bullet\ \  &\text{for every }\rho\in 2^{\t n}\setminus \t A[\t t],\ (\ref{singeqdyn2}) \text{ follows;}\\ \nonumber
 &\text{for every }\rho\in \t A[\t t],\text{ either (\ref{singeqdyn2}) or (\ref{singeqdyn3}) follows.}
 \end{align}
 Thus, there must be a \pf
set $B\subseteq 2^{\leq \t n}$ with $[B]^\preceq\supseteq 2^{\t n}\setminus \t A[\t t]$
such that
\begin{align}\label{singeqdyn0}
1-||\Mb[t](\rho)||_1&\leq \frac{1}{a}(1-\m(\t A_\rho|\rho))  &\text{ for all }\rho\in \t A[\t t]\setminus [B]^\preceq;
\\ \label{singeqdyn1}
|| \Mb[t](\h\rho)||_1&\geq 1 &\text{ for all }\h\rho\in B.
\end{align}
Thus, by supermartingale of $M_i[t]$,
\begin{align}\nonumber
1-||\Mb[t ](\emptyset)||_1
&\overset{(\ref{singeqdyn1})}{\leq}  \int_{\rho\in \t A[\t t]\setminus [B]^\preceq}\bigg(1-||\Mb[t](\rho)||_1\bigg)
\\ \nonumber
& \overset{(\ref{singeqdyn0})}{\leq}
\int_{\rho\in \t A[\t t]\setminus [B]^\preceq}\frac{1}{a}\big(1-\m(\t A_\rho|\rho)\big)
 \\ \nonumber
 &\overset{(\ref{singeq1})}
{\leq}  2\delta.
\end{align}
On the other hand, the cost of \alice, namely $\m(A)$, satisfies:
\begin{align}\nonumber
1-\m(A) = \int_{\rho\in\t A[\t t]}\big(1-\m(\t A_\rho|\rho)\big)
\overset{(\ref{singeq2})}{\geq} a\delta.
\end{align}
Thus we are done.
\end{proof}
\begin{remark}\label{singrem1}
The proof of Claim \ref{singclaim2}
does not take advantage of $i$-\sided ness
but the mere fact that $M_i[t]$ is a supermartingale
nondecreasing with respect to $t$.
\end{remark}
By Claim \ref{singclaim2}, Lemma \ref{singlem42} is reduced to the following assertion:
\begin{align}\nonumber
\bullet\ \ &\text{For every $a>0$, there exists a $\varepsilon>0$ and $n\in\omega$
such that \alice\ has }\\ \nonumber
&\text{a winning strategy for the dynamic $(a,n)$-\sided-game \ref{singdef3}
such that }\\ \nonumber
&\text{$\m(A)\leq 1-\varepsilon$.}
\end{align}
Combine with the upward closure
of game \ref{singdef3}
(i.e., if \alice\ has a winning strategy for the dynamic
$(a,n)$-\sided-game \ref{singdef3} such that
$\m(A)\leq 1-\varepsilon$, then she also has a winning strategy for
the dynamic $(a,\h n)$-\sided-game \ref{singdef3}
such that $\m(A)\leq 1-\varepsilon$ provided $\h n\geq n$),
   Lemma \ref{singlem42} is reduced to
the following:
\begin{lemma}\label{singques0}
For every $a>0$,  there is an $n\in\omega$ such that
\alice\ has a winning strategy for the dynamic $(a,n)$-\sided-game \ref{singdef3}
such that $\m(A)<1$.
\end{lemma}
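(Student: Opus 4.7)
The plan is to follow the reduction chain sketched in \S\ref{singsec4}. I would introduce the further-restricted dynamic game \ref{singdef4} in which Baby is additionally constrained to: (i) achieve $||\Mb[t](\sigma)||_1\geq 1$ at $\sigma$ itself rather than merely at some prefix $\hat\sigma\preceq\sigma$, and (ii) maintain the uniform cap $||\Mb[t](\rho)||_1\leq 1+\delta$ on all $\rho\in 2^{\leq n}$, for a small parameter $\delta$ depending on $a$. Lemma \ref{singlem43} then asserts that Alice wins this restricted game with $\m(A)<1$, and the task of Claim \ref{singclaim4} is to absorb the extra constraints at essentially no cost, thereby deducing Lemma \ref{singques0}.

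For the reduction step (Claim \ref{singclaim4}), the key observation is that each potential violation of (i) or (ii) converts into surplus capital at the root which Alice may credit toward the dynamic criterion $1-||\Mb[t](\emptyset)||_1\leq \frac{1}{a}(1-\m(A[t]))$. If Baby ever catches an enumerated $\sigma$ at a proper prefix $\hat\sigma$, then by supermartingale monotonicity the whole cylinder $[\hat\sigma]$ contributes at least $\m(\hat\sigma)$ worth of capital to $||\Mb(\emptyset)||_1$, far more than the $\m(\sigma)$ Alice actually spent; symmetrically, whenever $||\Mb[t](\rho)||_1$ exceeds $1+\delta$, the overshoot is pure supermartingale slack that Alice books as free progress. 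Concretely, I would run Alice's strategy for the restricted game while maintaining a bookkeeping of the ``refund'' generated by such violations, treating them as if the enclosing cylinder had been enumerated at the smaller cost. A routine computation shows that the accumulated refund dominates any efficiency loss, and the residual $O(\delta)$ slack is absorbed by shrinking $\delta$ in the final parameter choice. Notably, following Remark \ref{singrem1}, this reduction is again insensitive to sided-ness and uses only supermartingale-monotonicity.

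The real combinatorial content is then pushed into Lemma \ref{singlem43}, where Baby is effectively forced to allocate capital as a near-martingale uniformly bounded by $1+\delta$. Here sided-ness finally does essential work: because $M_0$ can only grow along $0$-extensions and $M_1$ only along $1$-extensions, at a random $\sigma\in 2^n$ the pair $\Mb(\sigma)$ distributes its capital with variance governed by the bit-balance of $\sigma$. Alice's strategy would be to enumerate strings chosen to force Baby into high-variance allocations, which, coupled with the hard cap $1+\delta$, prevents $||\Mb(\emptyset)||_1$ from staying far below $1$ on an asymptotically full subset of $2^n$.

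The hard part will be the quantitative conversion of this ``opposite-bias variance gap'' into a genuine bound on $1-||\Mb(\emptyset)||_1$ in terms of $1-\m(A)$, and in particular identifying the correct $n=n(a)$ and $\delta=\delta(a)$ so that Alice's enumeration terminates strictly before exhausting $2^n$. This is the variance-based supermartingale analysis the authors single out as the central innovation of the paper, and is deferred to \S\ref{singsec0}.
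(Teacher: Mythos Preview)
Your reduction chain is correct and matches the paper exactly: Lemma \ref{singques0} $\leftarrow$ Claim \ref{singclaim4} $\leftarrow$ Lemma \ref{singlem43}, with the combinatorial variance argument deferred to \S\ref{singsec0}. The issue is in your mechanism for Claim \ref{singclaim4}.

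Your ``refund bookkeeping'' idea does not work as stated. When Baby catches $\sigma$ at a proper prefix $\hat\sigma$, you get $||\Mb[t](\hat\sigma)||_1\geq 1$ and hence $||\Mb[t](\emptyset)||_1\geq \m(\hat\sigma)$ by the supermartingale inequality; but this is useless for the dynamic criterion $1-||\Mb[t](\emptyset)||_1\leq \tfrac{1}{a}(1-\m(A[t]))$ unless $\m(\hat\sigma)$ is already close to $1$. A single violation at a short prefix does not ``credit toward the root'' in any additive way you can accumulate. The same objection applies to rule-II overshoots: $||\Mb[t](\rho)||_1>1+\delta$ at one $\rho$ contributes only $\m(\rho)\delta$ of slack, which is not ``free progress'' of any useful size. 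More seriously, once Baby violates a restriction rule, the restricted-game strategy you are simulating no longer has a well-defined next move, so ``continue while bookkeeping refunds'' is not a strategy.

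The paper's mechanism is different and is precisely the missing idea. First, it isolates the local notion of \emph{winning attention} (Fact \ref{singfactwinatt}): if some $\rho$ satisfies $1-||\Mb[t](\rho)||_1\leq \tfrac{1}{a}(1-\m(A[t]|\rho))$ with $\m(A[t]|\rho)<1$, then Alice wins globally by enumerating $2^n\setminus[\rho]^\preceq$. Second, it engineers the simulation so that any violation \emph{triggers} winning attention. Alice plays on $2^{\tilde n+\hat n}$ rather than $2^{\tilde n}$; when the restricted strategy says to enumerate $\rho\in 2^{\tilde n}$, she instead enumerates $D_\rho=([\rho]^\preceq\cap 2^{\tilde n+\hat n})\setminus\{\rho 0^{\hat n}\}$, reserving one leaf. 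This forces either a catching-point $\hat\rho\preceq\rho$ (rule I broken) or $||\Mb[t](\rho)||_1\geq 1-2^{-\hat n}$ (rule I effectively obeyed). In the first case, or whenever rule II is broken at some $\rho$, the reserved leaf guarantees $\m(A[t]|\hat\rho)<1$ while $1-||\Mb[t](\hat\rho)||_1\leq 0$, so $\hat\rho$ receives winning attention and Alice terminates immediately. Only if no violation ever occurs does the restricted strategy run to completion, and then its own winning state gives the dynamic win (with $a$ degraded to $a/2$ to absorb the $2^{-\hat n}$ scaling). Your sketch of Lemma \ref{singlem43} is in the right spirit, but for Claim \ref{singclaim4} you need this reserve-and-abort mechanism, not an amortized refund ledger.
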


\subsubsection{The power of dynamic goal —— \typeaattention}
\label{singsec11}
The dynamic goal argument is arguably the most important ingredients
of the proof of Lemma \ref{singlem41}.
Let's briefly explain the power of dynamic goal (i.e., why it is
considerably easier to win the dynamic  game \ref{singdef3}).
During the dynamic $(a,n)$-\sided-game \ref{singdef3} (or some similar game),
\begin{definition}[\typeaattention]
We say $\rho\in 2^{\leq n}$ \emph{receives \typeaattention}
at round $t$
iff:
\begin{align}\label{singeq11}
1-||\Mb[t](\rho)||_1&\leq\frac{1}{a} (1-\m(A[t]|\rho))\text{ \ while }\\ \nonumber
\m(A[t]|\rho)&<1.
\end{align}
\end{definition}
The point is,
\begin{fact}\label{singfactwinatt}
If some $\rho$ receives \typeaattention, then \alice\ wins
the dynamic $(a,n)$-\sided-game
\ref{singdef3} (with $\m(A)<1$)
 by enumerating (whatever is left in)
  $2^n\setminus [\rho]^\preceq$.
  \end{fact}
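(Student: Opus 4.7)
Suppose $\rho$ receives \typeaattention\ at round $t_0$. \alice's strategy is to enumerate every element of $2^n\setminus [\rho]^\preceq$ that has not yet been enumerated. Let $t$ be a round at which this is complete. Then $A[t]=A[t_0]\cup(2^n\setminus[\rho]^\preceq)$ and $A[t]\cap[\rho]^\preceq=A[t_0]\cap[\rho]^\preceq$, so $\m(A[t]|\rho)=\m(A[t_0]|\rho)$, and a direct measure computation gives
\[
1-\m(A[t]) \;=\; \m(\rho)\bigl(1-\m(A[t]|\rho)\bigr).
\]
Since $\m(A[t_0]|\rho)<1$ by the definition of \typeaattention, this quantity is strictly positive, so the clause ``$\m(A)<1$'' is immediate; the real task is to verify $1-||\Mb[t](\emptyset)||_1 \leq \tfrac{1}{a}(1-\m(A[t]))$.

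The plan is to exhibit a prefix-free set $G'\subseteq 2^{\leq n}$ with $[G']^\preceq\supseteq 2^n$ (so $\sum_{\tau\in G'}\m(\tau)=1$) on which $||\Mb[t]||_1$ admits good pointwise lower bounds, and then invoke the supermartingale inequality
\[
||\Mb[t](\emptyset)||_1 \;\geq\; \sum_{\tau\in G'}\m(\tau)\,||\Mb[t](\tau)||_1.
\]
By the rule of game \ref{singdef3} together with monotonicity of $M_i[t]$ in $t$, every $\sigma\in A[t]$ has a \catchingpoint\ $\h\sigma\preceq\sigma$ with $||\Mb[t](\h\sigma)||_1\geq 1$. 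I would collect a prefix-free family $G$ of such catching points with $[G]^\preceq\supseteq A[t]$, and form $G'$ from $G$ by discarding every $\tau\in G$ that strictly extends $\rho$ and adjoining $\rho$.

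To verify the properties of $G'$: if some retained $\tau\in G$ were a prefix of $\rho$, then $[\tau]^\preceq$ would already cover $[\rho]^\preceq\cap 2^n$, and combined with $[G]^\preceq\supseteq A[t]\supseteq 2^n\setminus[\rho]^\preceq$ this gives $[G]^\preceq\supseteq 2^n$; the supermartingale inequality then yields $||\Mb[t](\emptyset)||_1\geq 1$, which is strictly stronger than we need. Outside this trivial case, no retained element of $G$ is comparable with $\rho$, so $G'$ is prefix-free; and $[G']^\preceq\supseteq 2^n$ because extensions of $\rho$ are covered by $\rho\in G'$, while any $\sigma\in 2^n\setminus[\rho]^\preceq\subseteq A[t]$ has a catching ancestor $\tau\in G$ that cannot strictly extend $\rho$ (else $\rho\prec\tau\preceq\sigma$ would contradict $\sigma\notin[\rho]^\preceq$), so $\tau\in G'$.

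With $G'$ in hand, monotonicity and the \typeaattention\ inequality give $||\Mb[t](\rho)||_1\geq ||\Mb[t_0](\rho)||_1\geq 1-\tfrac{1}{a}(1-\m(A[t]|\rho))$, while $||\Mb[t](\tau)||_1\geq 1$ for each $\tau\in G'\setminus\{\rho\}$. Summing,
\[
||\Mb[t](\emptyset)||_1 \;\geq\; (1-\m(\rho))+\m(\rho)\Bigl(1-\tfrac{1}{a}(1-\m(A[t]|\rho))\Bigr) \;=\; 1-\tfrac{1}{a}\m(\rho)(1-\m(A[t]|\rho)),
\]
and combining with the measure identity above delivers the required winning bound. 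No single step is difficult; the place that demands care is the bookkeeping for $G'$ and its interaction with $\rho$.
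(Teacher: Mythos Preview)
Your proof is correct and follows essentially the same approach as the paper: both argue that after enumerating $2^n\setminus[\rho]^\preceq$, the complement of $[\rho]$ is covered by catching points, then combine the supermartingale inequality over a suitable prefix-free cover with the \typeaattention\ bound at $\rho$ and the measure identity $1-\m(A[t])=\m(\rho)(1-\m(A[t]|\rho))$. Your version is simply more explicit about the prefix-free set $G'$ and the edge case where a catching point lies at or above $\rho$, which the paper compresses into the phrase ``every $\tilde\rho\in 2^{|\rho|}\setminus\{\rho\}$ admits a \catchingpoint.''
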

  \begin{proof}
  To see this, upon doing so,
  it forces
  every $\t\rho\in 2^{|\rho|}\setminus \{\rho\}$ to admit a \catchingpoint.
    Therefore, we have, by supermartingale of $M_i[t]$, (\ref{singeq11})
    and $A[t+1]=A[t]\cup( 2^n\setminus[\rho]^\preceq)$,
  \begin{align}\nonumber
1-||\Mb[t+1](\emptyset)||_1&\leq
(1-||\Mb[t+1](\rho)||_1)\m(\rho)\\ \nonumber
&\leq \frac{1-\m(A[t]|\rho)}{a}\m(\rho)\\ \nonumber
&= \frac{1}{a}(1-\m(A[t+1])).
\end{align}
On the other hand, $A[t+1]\cap [\rho]^\preceq = A[t]\cap [\rho]^\preceq\ne\emptyset$.
Thus $\m(A[t+1])<1$.
\end{proof}
Note: whether $\rho$ receives \typeaattention\
only relies on what happens in $[\rho]^\preceq$.

\subsection{Imposing   restrictions on the \sided\ player}\label{singsec1}
The dynamic winning criterion allows us to impose very strong restriction on
the \sided\ player's action.
\begin{definition}[Restricted dynamic $(a,\delta,n)$-\sided-game]
 \label{singdef4}
At each round $t\in\omega$, \alice\ firstly enumerates a $\sigma\in 2^n$
(that has not been enumerated before);
then \baby\ presents $i$-\sided\
 supermartingale
$M_i[t]$ such that the following hold:
\begin{itemize}
\item $||\Mb[t](\sigma)||_1\geq 1 $;
\item $||\Mb[t](\rho)||_1\leq 1+\delta$
for all $\rho\in 2^{\leq n}$;
\item $M_i[t]$ dominates $ M_i[t-1]$ on $2^{\leq n}$
(we set $M_{i}[-1]\equiv 0$).
\end{itemize}
Let $A[t]$ denote the set of $\sigma$ \alice\ enumerated by round $t$.
\alice\ wins the game if for some $t$, $1-||\Mb[t](\emptyset)||_1
\leq \frac{1}{a}(1-\m(A[t]))$.
\end{definition}
Clearly the restricted dynamic game \ref{singdef4} has
the same winning criterion as dynamic game \ref{singdef3} but different
rules.
We refer to the additional requirement
$||\Mb[t](\sigma)||_1\geq 1$
(instead of $||\Mb[t](\h\sigma)||_1\geq 1$ for some $\h\sigma\preceq\sigma$)
and $||\Mb[t](\rho)||_1\leq 1+\delta$
as \emph{restriction rule I,
restriction rule II} respectively.
Let's firstly focus on the restriction rule II.
The intuition   (how a winning strategy for the restricted game
helps her win the non restricted game) is the following.
If, in the non restricted dynamic game,
 at some round $t$, \baby\ is forced to set $|| \Mb[t](\rho)||_1>1$,
then it is likely that $\rho$ receives \typeaattention\ (at  round $t$)
since we now have $1-||\Mb[t](\rho)||_1<0$. So upon this (namely $|| \Mb[t](\rho)||_1>1$)
happening,
\begin{align}\label{singeq12}
&\text{as long as some string
in $[\rho]^\preceq\cap 2^n$ is not enumerated by round $t$,}\\ \nonumber
&\text{$\rho$ receives \typeaattention.}
\end{align}
 How do \alice\ ensure (\ref{singeq12}).
 That is simple, whenever she wants to enumerate $\sigma$, instead of enumerating
 $\sigma$, she enumerates $([\sigma]^\preceq\cap 2^{n+\h n})\setminus \{\sigma 0^{\h n}\}$.
 i.e., she reserves the string $\sigma 0^{\h n}$
 to prepare for the \typeaattention: $1-|| \Mb[t](\sigma)||_1<0$.
 By doing so, she could no longer  force $|| \Mb[t](\h\sigma)||_1\geq 1$ (for some $\h\sigma\preceq\sigma$),
 but $|| \Mb[t](\h\sigma)||_1\geq 1-2^{-\h n}$ (for some $\h\sigma\preceq\sigma$).
 Clearly, letting $\h n$ to be sufficiently large,
 this small difference can be ignored.
The intuition of restriction rule I is similar.
The concrete proof goes as follows.

\begin{claim}\label{singclaim4}
Let $a,\delta>0, \t n\in \omega$. Suppose \alice\ has a winning strategy for
the restricted dynamic $(a, \delta, \t n)$-\sided-game \ref{singdef4}
such that $\m(A)<1$.
Then there is an $n\in\omega$ such that \alice\ has a winning
 strategy for
the  dynamic $(a/2, n)$-\sided-game \ref{singdef3}
such that $\m(A)<1$.
\end{claim}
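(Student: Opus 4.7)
The plan is to simulate $\tau^*$, the given winning strategy for the restricted $(a,\delta,\tilde n)$-sided-game, inside the dynamic game on $2^n$ where $n=\tilde n+\hat n$ and $\hat n$ is chosen sufficiently large in terms of $a$, $\delta$, and the cost-gap attached to $\tau^*$. The key device is a \emph{reservation trick}: whenever $\tau^*$ prescribes a move $\sigma^*\in 2^{\tilde n}$, Alice enumerates the block
\[
E(\sigma^*):=\bigl([\sigma^*]^\preceq\cap 2^n\bigr)\setminus\{\sigma^* 0^{\hat n}\}
\]
one string at a time in the dynamic game, but never enumerates the reserved string $\sigma^* 0^{\hat n}$. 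This guarantees $\m(A[t]\mid\rho)<1$ for every $\rho\in 2^{\leq\tilde n}$ throughout the run.

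Next, at each round during the enumeration of a block, Alice checks whether some $\rho\in 2^{\leq\tilde n}$ satisfies $\|\Mb[t](\rho)\|_1\geq 1$. If so, the reservation immediately gives $1-\|\Mb[t](\rho)\|_1\leq 0\leq (2/a)(1-\m(A[t]\mid\rho))$, so $\rho$ receives $a/2$-winning attention; by Fact \ref{singfactwinatt}, Alice terminates by enumerating $2^n\setminus[\rho]^\preceq$ and wins the dynamic $(a/2,n)$-sided-game with $\m(A)<1$. If no such early victory occurs during the enumeration of $E(\sigma^*)$, then the minimal catching point of each $\tau\in E(\sigma^*)$ is a proper extension of $\sigma^*$ (lying at some level in $(\tilde n,n]$); collecting these into a prefix-free antichain $B$ of proper extensions of $\sigma^*$ with $E(\sigma^*)\subseteq[B]^\preceq$, the supermartingale inequality gives
\[
\|\Mb[t](\sigma^*)\|_1\;\geq\;2^{\tilde n}\sum_{b\in B}2^{-|b|}\|\Mb[t](b)\|_1\;\geq\;2^{\tilde n}(2^{-\tilde n}-2^{-n})\;=\;1-2^{-\hat n}.
\]

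Alice then feeds $\Mb^*[t^*]:=\Mb[t]/(1-2^{-\hat n})$ restricted to $2^{\leq\tilde n}$ back to $\tau^*$ as Baby's response. Restriction rule I is ensured by the previous inequality; rule II follows because the failure of early victory forces $\|\Mb[t](\rho)\|_1<1$ on all of $2^{\leq\tilde n}$, hence $\|\Mb^*[t^*](\rho)\|_1<1/(1-2^{-\hat n})\leq 1+\delta$ as soon as $\hat n$ satisfies $2^{-\hat n}\leq\delta/(1+\delta)$; sidedness and the domination $M^*_i[t^*]\geq M^*_i[t^*-1]$ are preserved by the common positive scaling. Since $\tau^*$ wins, at some simulated round $t^*$ one has $1-\|\Mb^*[t^*](\emptyset)\|_1\leq(1/a)(1-\m(A^*[t^*]))$ with $\m(A^*[t^*])<1$. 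Translating back through the identities $\Mb[t]=(1-2^{-\hat n})\Mb^*[t^*]$ and $\m(A[t])=(1-2^{-\hat n})\m(A^*[t^*])$, a short rearrangement shows that the target $1-\|\Mb[t](\emptyset)\|_1\leq(2/a)(1-\m(A[t]))$ reduces to $2^{-\hat n}(1+\delta)\leq(1/a)(1-\m(A^*[t^*]))$, which is secured by taking $\hat n$ large enough to dominate the quantitative cost-gap of $\tau^*$; the cost bound $\m(A[t])=(1-2^{-\hat n})\m(A^*[t^*])<1$ is then immediate.

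The main obstacle is the simultaneous arithmetic absorption of three distinct approximation losses into a single choice of $\hat n$: the drop from $1$ to $1-2^{-\hat n}$ in the catching inequality caused by reservation; the $(1+\delta)$-slack in rule II after rescaling; and the $2^{-\hat n}(1+\delta)$ error term appearing in the translation of the winning criterion. All three are handled together by choosing $\hat n$ large in terms of $a$, $\delta$, and the cost-gap of $\tau^*$, and it is precisely this slack budget that forces the loss factor $2$ between the restricted parameter $a$ and the dynamic parameter $a/2$ in the statement.
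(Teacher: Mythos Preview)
Your proof is correct and follows essentially the same approach as the paper: the reservation trick $E(\sigma^*)=([\sigma^*]^\preceq\cap 2^n)\setminus\{\sigma^*0^{\hat n}\}$, the monitoring for a node $\rho\in 2^{\leq\tilde n}$ with $\|\Mb[t](\rho)\|_1\geq 1$ (triggering winning attention via Fact~\ref{singfactwinatt}), and the three-way absorption of errors into a large $\hat n$ all mirror the paper's argument. The only cosmetic difference is that the paper phrases the simulation as playing a ``$(1-\Delta)$-scaled'' restricted game, whereas you equivalently rescale Baby's response by $1/(1-2^{-\hat n})$ before feeding it to $\tau^*$; your single threshold $\|\Mb[t](\rho)\|_1\geq 1$ also merges what the paper separates into two cases (breaking rule~I versus rule~II), but this is harmless since either violation yields $\|\Mb[t](\rho)\|_1\geq 1$ at some $\rho\in 2^{\leq\tilde n}$.
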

\begin{proof}
Let $\h n$ be sufficiently large so that
$\Delta = 2^{-\h n}$ is sufficiently small (depending on $a,\delta,\t n$ and to be specified).
Firstly, we note that for $\rho\in 2^{\t n}$, upon enumerating
$D_\rho:=([\rho]^\preceq\cap 2^{\t n+\h n})\setminus \{\rho0^{\h n}\}$,
\alice\ forces (when playing the dynamic \sided-game \ref{singdef3}) that
\begin{align}\label{singeqres1}
&\text{either  $\rho$ admits a \catchingpoint. i.e., }
\\ \nonumber
&\hspace{1cm}\text{$||\Mb[t](\h\rho)||_1\geq 1$ for some $\h\rho\preceq\rho$;} \\ \label{singeqres0}
&\text{or $1-||\Mb[t](\rho)||_1\geq 1-\Delta$.}
\end{align}
A winning strategy for   dynamic $(a/2, \t n+\h n)$-\sided-game \ref{singdef3}
proceeds as follows:
\begin{enumerate}[1.]
\item On $2^{\t n}$, \alice\ plays the  winning strategy for the
($(1-\Delta)$-scaled
\footnote{In a $(1-\Delta)$-scaled restricted dynamic $(a,\delta,\t n)$-\sided-game,
the restriction rule I, II are
$||\Mb[t](\sigma)||_1\geq 1-\Delta $,
 $||\Mb[t](\rho)||_1\leq (1-\Delta)(1+\delta)$ respectively.
 The winning criterion is
 $(1-\Delta)-||\Mb[t](\emptyset)||_1
\leq (1-\Delta)\cdot\frac{1}{a}(1-\m(A[t]))$.
 })
restricted dynamic $(a, \delta,\t n)$-\sided-game \ref{singdef4}
until  some string $\rho\in 2^{\leq \t n+\h n}$ receives \typeaattention\
(with respect to the dynamic $(a/2, \t n+\h n)$-\sided-game \ref{singdef3});
\item But when the strategy tells her to enumerate $\rho\in 2^{\t n}$,
instead of   enumerating $\rho$, she enumerates
$D_\rho$,
which forces either (\ref{singeqres1}) or (\ref{singeqres0}).
\item When some $\rho\in 2^{\leq \t n+\h n}$ receives \typeaattention,
\alice\ then enumerates $2^{\t n+\h n}\setminus [\rho]^\preceq$.
\end{enumerate}
Clearly, there are three cases depending
on whether \baby\ breaks the restriction rule I, II
of the ($(1-\Delta)$-scaled) restricted dynamic game \ref{singdef4} played on $2^{\t n}$.

\ \\

\noindent\textbf{Case 1:} \baby\ breaks the restriction rule I.
i.e.,  for some $\rho\in 2^{\t n}$, where $D_\rho$
is enumerated (at round $t$),
(\ref{singeqres0}) does not follow.

As we observed,
this means (\ref{singeqres1}) follows. But if
 \catchingpoint\ $\h\rho$ of $\rho\in 2^{\t n}$ appears,
then $\h\rho$ clearly receives \typeaattention\ (with respect to the dynamic $(a/2,\t n+ \h n)$-\sided-game \ref{singdef3}),
since $1-||\Mb[t](\h\rho)||_1\leq 0$ while $\m(A[t]|\h\rho)<1$
(since $\rho 0^{\h n}\notin A[t]$).
As we argued in Fact \ref{singfactwinatt}, action (3) makes
\alice\ win the dynamic $(a/2,\t n+\h n)$-\sided-game \ref{singdef3}
(with $\m(A)<1$).

\ \\

\noindent\textbf{Case 2: }\baby\ breaks
the restriction rule II. i.e., (since $\Delta$ is sufficiently small depending on $\delta$)
 for some $\rho\in 2^{\leq \t n}$
\begin{align}\nonumber
||\Mb[t](\rho)||_1&> (1-\Delta)(1+\delta)\\ \nonumber
&\geq 1+\delta/2.
\end{align}

However, that implies $\rho$ receives \typeaattention\
  since $1-||\Mb[t](\rho)||_1<0$
while $\m(A[t]|\rho)<1$.
By Fact \ref{singfactwinatt}, action (3) makes
\alice\ win the dynamic $(a/2,\t n+\h n)$-\sided-game \ref{singdef3}
(with $\m(A)<1$).

\ \\

\noindent\textbf{Case 3: }\baby\ does not break the restriction rules.

Let $\t A[t]$ denote the set of $\rho\in 2^{
\t n}$ such that $D_\rho$ is enumerated by round $t$.
The winning strategy (of the $(1-\Delta)$-scaled restricted dynamic $(a,\t n)$-\sided-game \ref{singdef4} on $2^{\t n}$)
  forces that at some round $t$,
\begin{align}\nonumber
(1-\Delta)- ||\Mb[t](\emptyset)||_1&\leq (1-\Delta)\cdot \frac{1}{a}(1-\m(\t A[t])).
\end{align}
Then we have (since $\Delta$ is sufficiently small depending on $a,\t n$,
so $\frac{1}{a}(1-\m(\t A[t]))\geq 2^{-\t n}/a\geq \Delta$)
\begin{align}\nonumber
1- ||\Mb[t](\emptyset)||_1&\leq (1-\Delta)\cdot\frac{1}{a}(1-\m(\t A[t]))+\Delta
\\ \nonumber
&\leq \frac{2}{a}(1-\m(  A[t]))\text{ while }\\ \nonumber
\m(A[t])&  <1
\end{align}
So \alice\ wins the dynamic $(a/2,\t n+\h n)$-\sided-game
with $\m(A) <1$.
Thus we are all done.
\end{proof}
\begin{remark}\label{singrem2}
The proof of Claim \ref{singclaim4}
does not take advantage of $i$-\sided ness
but the mere fact that $M_i[t]$ is a supermartingale
nondecreasing with respect to $t$.
\end{remark}
By Claim \ref{singclaim4},
to prove Lemma \ref{singques0},
it suffices to prove the following.
\begin{lemma}
\label{singlem43}

For every $a>0$,
there are $ n\in\omega,\delta>0$ such that
\alice\ has a winning strategy for   restricted
dynamic $(a,\delta, n)$-\sided-game \ref{singdef4}
such that $\m(A)<1$.
\end{lemma}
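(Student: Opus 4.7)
The plan is to design an adaptive strategy for Alice that, given $a>0$, wins the restricted dynamic $(a,\delta,n)$-sided-game for suitably chosen $\delta$ small and $n$ large (both depending only on $a$). Alice restricts her enumerations to a homogeneous core $B\subseteq 2^n$ consisting of roughly balanced strings---those whose count of $0$s is within $O(\sqrt n)$ of $n/2$; by a Chernoff estimate $\m(B)\geq 1-\eta$ for $\eta$ as small as we wish. She enumerates $B$ adaptively, at each round picking the $\sigma\in B\setminus A[t]$ on which Baby's current $\|\mb M[t](\sigma)\|_1$ is smallest, and halts the first round $t$ at which the winning criterion $1-\|\mb M[t](\emptyset)\|_1\leq\tfrac1a(1-\m(A[t]))$ fires.

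The crucial analytical step---and the only place in the whole argument where the $i$-sidedness of $M_0$ and $M_1$ actually enters, as Remarks \ref{singrem1} and \ref{singrem2} anticipate---is a \emph{sidedness cost} estimate bounding Baby's necessary expenditure. The heuristic is that restriction II caps Baby's total root capital at $1+\delta$, so Baby cannot pre-invest uniformly; when Baby reacts to Alice's enumeration of a balanced $\sigma$, raising $M_0(\sigma)+M_1(\sigma)$ up to $1$ forces---via the $0$- (resp.\ $1$-)sidedness constraint at each bit-flip along the path to $\sigma$---\emph{collateral} raises of $M_0$ on the $0$-branch sibling subtrees and of $M_1$ on the $1$-branch sibling subtrees. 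Summing these collateral raises for a homogeneous $\sigma$ (which has about $n/2$ bit flips of each type) should yield an amplified root-cost of the form
\[
\|\mb M[t](\emptyset)\|_1\;\geq\;(1+\gamma)\,\m(A[t])\;-\;O(\eta),
\]
for a gain factor $\gamma=\gamma(a,\delta)>0$. For $\delta$ small and $n$ large in terms of $a$, this beats the trivial supermartingale bound $\m(A[t])$ by exactly the margin that the winning criterion requires, at latest once $\m(A[t])$ has grown close to $\m(B)<1$; so Alice's stopping rule fires while she still has strings in $B$ left to enumerate, i.e.\ with $\m(A)<1$.

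The principal obstacle is proving the sidedness cost estimate sharply enough. The trivial supermartingale identity yields only $\|\mb M(\emptyset)\|_1\geq\m(A)$; extracting the gain factor $\gamma>0$ needs a careful pathwise accounting. Concretely, raising $M_0(\sigma)$ by $\Delta$ propagates up the tree, and at every level $k$ with $\sigma(k)=1$ the constraint $M_0(\sigma\uhr k\ast 0)\geq M_0(\sigma\uhr k\ast 1)$ demands a matching raise on the $0$-sibling subtree that costs an extra $\m(\sigma)\Delta$ at the root, \emph{unless} Baby has previously pre-invested there---but preinvestment is what restriction II budgets against. I expect the argument to combine this pathwise propagation with a Chernoff/variance bound on balanced paths, showing that in aggregate the collateral cost Alice forces is strictly larger than the bare $\m(A)$ of the trivial bound. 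Once this cost estimate is in hand, the rest of the lemma---combining it with the stopping rule above and the bookkeeping of $A[t]$, and confirming that $\m(A)<1$ at the stopping time---should be essentially mechanical.
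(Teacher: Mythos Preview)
Your proposal has a genuine gap at the step you yourself flag as the ``principal obstacle'': the sidedness cost estimate
$\|\mb M[t](\emptyset)\|_1\geq (1+\gamma)\m(A[t])-O(\eta)$ with $\gamma>0$ fixed is not established, and the mechanism you sketch does not produce it. The collateral raise you describe---forcing $M_0(\sigma\uhr k\ast 0)\geq M_0(\sigma\uhr k\ast 1)$ when $\sigma(k)=1$---does \emph{not} cost extra at the root in the aggregate: that raise sits on the $0$-sibling subtree and Baby is free to redistribute it (as a supermartingale) to cover balanced strings Alice will enumerate later in that subtree. Since balanced strings are dense in every subtree of moderate depth, essentially all the collateral is recycled rather than wasted, and the purported constant gain $\gamma$ collapses to something of order $\eta$ (or $1/\sqrt n$), which is not enough to force the winning criterion for large $a$. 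Your adaptive rule ``enumerate the current minimum'' plays no structural role here and does not prevent this recycling.

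The paper's argument is quite different and does not attempt any global gain-factor bound. Alice enumerates along a fixed ternary embedding $e:3^{n/2}\to 2^n$ in lexicographic order, which guarantees that for every node $\rho=e(\alpha)$ there is a round at which $B_\rho=\{\rho 0,\rho 11\}$ has been enumerated while $[\tau_\rho]^\preceq=[\rho 10]^\preceq$ is still untouched. Sidedness is used \emph{locally} at each such $\rho$: one checks directly that $M_i(\rho)\geq\min\{M_i(\rho 0),M_i(\rho 11)\}$ for each $i$, so $1-\|\mb M(\rho)\|_1\leq C\sqrt{\variance(\mb M\mid B_\rho)}$. Hence if $\rho$ does not receive winning attention (which would finish the game), Baby is forced to allocate variance $\geq C(a)$ on $B_\rho$; restriction II makes this variance persist. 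A supermartingale variance decomposition then caps the total variance over the ternary tree by an absolute constant, contradicting the $\Theta(n)$ many nodes $\rho$ that each demand constant variance. The crux is thus the carefully staged enumeration order creating many local two-point traps, together with a variance budget argument---not a uniform amplification of root cost over a homogeneous set.
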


\subsection{Proof of Lemma \ref{singlem43}}
\label{singsec0}

Fix $a>0$,
 a sufficiently small $\delta>0$ (depending on $a$) and
a sufficiently large $n$ (depending on $a $).
We start with the following observation (\ref{singobs01}).
For a nonempty
\pf  set $B\subseteq 2^{<\omega}$,
a function $\Mb:2^{<\omega}\rightarrow \mathbb{R}^k$
(not necessarily a vector-martingale),
we write
\begin{align}\nonumber
&\mbE(\Mb|B) = \frac{1}{\m(B)}\int_B \Mb\text{ and }\\ \nonumber
&\variance(\Mb|B)= \frac{1}{\m(B)}\int_{\sigma\in B} ||\Mb(\sigma) - \mbE(\Mb|B) ||_2^2.
\end{align}
In   \S  \ref{singsec0}, for a $\rho\in 2^{<\omega}$, let $B_\rho = \{\rho11,\rho0\}, \tau_\rho = \rho10$.
The point is,
\begin{align}\label{singobs01}
&\text{if $B_\rho$ is enumerated
while nothing in $[\tau_\rho]^\preceq$ is enumerated, then }
\\ \nonumber
 &\text{ either $\rho$ receives \typeaattention;}\\ \nonumber
 &\text{ or \baby\ is forced to allocate a constant large   variance on $B_\rho$.}
 \end{align}

\begin{claim}\label{singclaim83}
Let $M_i$ be $i$-\sided\ supermartingale
(not necessarily martingale).
Suppose $||\Mb(11)||_1$,$||\Mb(0)||_1\geq 1$.
Then, let $B = \{0,11\}$,
we have $1-||\Mb(\emptyset)||_1\leq
C\sqrt{\variance(\Mb|B)}$
where $C$ is an absolute constant.

\end{claim}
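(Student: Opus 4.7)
The plan is to bound $1-||\Mb(\emptyset)||_1$ above by $M_0(0)-M_0(11)$ (up to a factor of $1/2$), and then convert $M_0(0)-M_0(11)$ to $\sqrt{\variance(\Mb|B)}$ by a direct computation using the weights $\m(0)=1/2$ and $\m(11)=1/4$.

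First I would propagate the hypothesis $||\Mb(0)||_1\geq 1$ up to the empty string using the supermartingale property and the sided conditions. For $M_1$, the $1$-sidedness at $\emptyset$ gives $M_1(1)\geq M_1(0)$, which together with the supermartingale inequality at $\emptyset$ yields $M_1(\emptyset)\geq M_1(0)$. For $M_0$, the $0$-sidedness at the node $1$ gives $M_0(10)\geq M_0(11)$, so the supermartingale inequality at $1$ yields $M_0(1)\geq M_0(11)$; combining with the supermartingale inequality at $\emptyset$ one obtains $M_0(\emptyset)\geq \tfrac{1}{2}(M_0(0)+M_0(11))$. Summing these two estimates and invoking $||\Mb(0)||_1\geq 1$ gives
\[
||\Mb(\emptyset)||_1 \;\geq\; \tfrac{1}{2}(M_0(0)+M_0(11))+M_1(0) \;=\; ||\Mb(0)||_1-\tfrac{1}{2}(M_0(0)-M_0(11)) \;\geq\; 1-\tfrac{1}{2}(M_0(0)-M_0(11)).
\]
Note that the same chain also shows $M_0(0)\geq M_0(1)\geq M_0(11)$, so the error term is nonnegative.

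Next I would bound $M_0(0)-M_0(11)\leq ||\Mb(0)-\Mb(11)||_2$ trivially, and compute the variance explicitly. With $\m(B)=3/4$ one has $\mbE(\Mb|B)=\tfrac{2}{3}\Mb(0)+\tfrac{1}{3}\Mb(11)$, hence $\Mb(0)-\mbE(\Mb|B)=\tfrac{1}{3}(\Mb(0)-\Mb(11))$ and $\Mb(11)-\mbE(\Mb|B)=-\tfrac{2}{3}(\Mb(0)-\Mb(11))$. Substituting into the definition yields $\variance(\Mb|B)=\tfrac{2}{9}||\Mb(0)-\Mb(11)||_2^2$, so $||\Mb(0)-\Mb(11)||_2=\tfrac{3}{\sqrt{2}}\sqrt{\variance(\Mb|B)}$, and the claim follows with $C=\tfrac{3}{2\sqrt{2}}$.

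There is no real obstacle. The key conceptual ingredient is the asymmetry that drives the argument: $M_1$ being $1$-sided allows $M_1(0)$ to be propagated all the way to $\emptyset$ without any loss, while $M_0$ being $0$-sided allows $M_0(11)$ to be propagated up through the node $1$ so that the only loss appears in the averaging step at $\emptyset$. The hypothesis $||\Mb(11)||_1\geq 1$ does not appear explicitly in this derivation; the symmetric version that uses both hypotheses by averaging $(||\Mb(0)||_1+||\Mb(11)||_1)/2\geq 1$ produces the dual bound $1-||\Mb(\emptyset)||_1\leq \tfrac{1}{2}|M_1(11)-M_1(0)|$ and yields the same constant through $|M_1(11)-M_1(0)|\leq ||\Mb(0)-\Mb(11)||_2$.
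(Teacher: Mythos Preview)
Your proof is correct, and the underlying idea is the same as the paper's: use $i$-sidedness to push values from $B=\{0,11\}$ up to $\emptyset$, then translate the loss into $\sqrt{\variance(\Mb|B)}$. The execution differs in one respect worth noting. The paper argues symmetrically, bounding $M_i(\emptyset)\geq\min\{M_i(0),M_i(11)\}$ for each $i$ and then invoking \emph{both} hypotheses via $\mbE(||\Mb||_1\,|\,B)\geq 1$ to obtain
\[
1-||\Mb(\emptyset)||_1\;\leq\;\sum_{i<2}\bigl(\mbE(M_i|B)-\min\{M_i(0),M_i(11)\}\bigr),
\]
after which each summand is crudely bounded by a constant times $\sqrt{\variance(\Mb|B)}$ (they arrive at $C=4$). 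You instead keep the sharper asymmetric estimate $M_0(\emptyset)\geq\tfrac{1}{2}(M_0(0)+M_0(11))$ alongside $M_1(\emptyset)\geq M_1(0)$, which lets you use only $||\Mb(0)||_1\geq 1$ and compute $\variance(\Mb|B)=\tfrac{2}{9}\,||\Mb(0)-\Mb(11)||_2^2$ exactly, yielding the tighter $C=\tfrac{3}{2\sqrt{2}}$. Your observation that the hypothesis $||\Mb(11)||_1\geq 1$ is not needed in this argument is correct and reflects the asymmetry of $B$; the paper's min-based formulation hides this but is slightly more format-agnostic.
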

\begin{proof}
This is because  if the variance is small, then
$\min\{ M_i(0),M_i(11)\}$
is close to $\mbE( M_i|B)$;
and by $i$-\sided\ of $M_i$,
$  M_i(\emptyset)\geq \min\{ M_i(0),M_i(11)\}$.
More specifically,
$\variance(M_i|B)\geq \frac{1}{3}\big(\min\{M_i(0),M_i(11)\}- \mbE(M_i|B)\big)^2.$
So,
suppose $\sqrt{\variance(\Mb|B)} = \Delta$.
We have: for some absolute constant $C_0$ (say $C_0=2$),
$$\mbE(M_i|B)-\min\{M_i(0),M_i(11)\} \leq C_0\Delta.$$
By $i$-\sided\ of $M_i$,
$M_i(\emptyset)\geq \min\{M_i(0),M_i(11)\}$.
 Thus
\begin{align}\nonumber
1-||\Mb(\emptyset)||_1 &\leq  \mbE(||\Mb||_1|B)
 -\sum\nolimits_{i}\min \{M_i(0),M_i(11)\}
 \\ \nonumber
 &\leq 2C_0\Delta.
 \end{align}
\end{proof}

The strategy of \alice\ is to enumerate $\sigma\in 2^n$ in some fixed order so that
for many $\rho\in 2^{\leq n}$, there is a round $t$ such that
$B_\rho$ has been enumerated (by round $t$)  while nothing in $[\tau_\rho]^\preceq$
 is enumerated (by round $t$).
 This forces \baby\ to allocate  constant large variance on $B_\rho$ for many $\rho$ (if no $\rho$ receives \typeaattention),
 which contradicts  to the variance analysis
  (Claim \ref{singclaim5}).

Let $\t B_0,\cdots,\t B_{m}\subseteq 2^{\leq n}$ be
a sequence of \pf  sets such that
$[\t B_{\t m}] = 2^\omega, \t B_{\t m}\subseteq [\t B_{\t m-1}]^\preceq$.
For each $\rho\in \t B_{\t m-1}$, let $\t B_\rho = [\rho]^\preceq\cap \t B_{\t m}$.
Let $\varepsilon>0$.
\begin{claim}
\label{singclaim5}
Let $(M_j,j<k)$ be supermartingales (not necessarily single-\sided)
such that $1-\varepsilon\leq \int_{\sigma\in 2^n}||\Mb(\sigma)||_1$,
 $||\Mb(\emptyset)||_1\leq 1$ and
$ M_j(\rho)\leq 2$ for all $\rho\in 2^{\leq n}$.
Then, for some constant $C(k)$ (depending on $k$),
$$\sum_{\t m<m}\int\nolimits_{\rho\in \t B_{\t m}}\variance(\Mb|\t B_\rho)
\leq C(k)(1+m\varepsilon).$$

\end{claim}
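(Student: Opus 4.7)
The plan is to reduce to the one-dimensional case, since $\variance(\Mb|\t B_\rho)=\sum_{j<k}\variance(M_j|\t B_\rho)$, and then invoke a telescoping identity for variance along a refining chain of prefix-free partitions, together with a ``supermartingale-drop'' correction to handle the failure of the martingale identity.

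Fix $j<k$. I would start from the elementary computation
\[
\m(\rho)\variance(M_j|\t B_\rho)=\int_{\t B_\rho}M_j^2-\m(\rho)\,\mbE(M_j|\t B_\rho)^2,
\]
and rewrite the last term as $\m(\rho)M_j(\rho)^2-\m(\rho)\bigl[M_j(\rho)^2-\mbE(M_j|\t B_\rho)^2\bigr]$. The supermartingale property gives $0\le\mbE(M_j|\t B_\rho)\le M_j(\rho)$, so combining the hypothesis $M_j\le 2$ with $a^2-b^2=(a+b)(a-b)$ yields the linear bound $M_j(\rho)^2-\mbE(M_j|\t B_\rho)^2\le 4\bigl(M_j(\rho)-\mbE(M_j|\t B_\rho)\bigr)$. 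Summing over $\rho\in\t B_{\t m}$ and using $\bigsqcup_{\rho\in\t B_{\t m}}\t B_\rho=\t B_{\t m+1}$, this gives
\[
\int_{\rho\in\t B_{\t m}}\variance(M_j|\t B_\rho)\le\Bigl(\int_{\t B_{\t m+1}}M_j^2-\int_{\t B_{\t m}}M_j^2\Bigr)+4\Bigl(\int_{\t B_{\t m}}M_j-\int_{\t B_{\t m+1}}M_j\Bigr).
\]

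Now both right-hand terms telescope across $\t m<m$: the quadratic piece to $\int_{\t B_m}M_j^2-\int_{\t B_0}M_j^2\le 4$, and the linear piece to $4\bigl(\int_{\t B_0}M_j-\int_{\t B_m}M_j\bigr)$, the cumulative supermartingale drop of $M_j$ from the top of the refinement chain to the bottom. Summing over $j<k$, the first contribution totals $\le 4k$, and the second aggregates into $4\bigl(\int_{\t B_0}\|\Mb\|_1-\int_{\t B_m}\|\Mb\|_1\bigr)$. Iterated supermartingale inequalities give
\[
\int_{2^n}\|\Mb\|_1\le\int_{\t B_m}\|\Mb\|_1\le\int_{\t B_0}\|\Mb\|_1\le\|\Mb(\emptyset)\|_1\le 1,
\]
while the hypothesis $\int_{2^n}\|\Mb\|_1\ge 1-\varepsilon$ forces this aggregate drop to be at most $4\varepsilon$. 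Thus $\sum_{\t m<m}\int_{\rho\in\t B_{\t m}}\variance(\Mb|\t B_\rho)\le 4k+4\varepsilon$, which fits within $C(k)(1+m\varepsilon)$ with $C(k)=4k$.

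The only nontrivial ingredient is the linear bound $M_j(\rho)^2-\mbE(M_j|\t B_\rho)^2\le 4\bigl(M_j(\rho)-\mbE(M_j|\t B_\rho)\bigr)$: it is precisely where the uniform pointwise bound $M_j\le 2$ enters, and without it the supermartingale correction would be quadratic in the drop and could in principle scale with $m$, putting the desired bound out of reach. Everything else is routine bookkeeping about refining partitions and interchanging sums, and, consistently with Remarks \ref{singrem1} and \ref{singrem2}, no use is made of single-sidedness of the $M_j$.
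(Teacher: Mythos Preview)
Your argument is correct and in fact yields the sharper bound $4k+4\varepsilon$, well within the stated $C(k)(1+m\varepsilon)$. The route, however, differs from the paper's.

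The paper decomposes each $M_j$ as $\hat M_j+\Delta M_j$, where $\hat M_j$ is the \emph{martingale} generated by $M_j\uhr 2^n$ and $\Delta M_j=M_j-\hat M_j\ge 0$ is the supermartingale excess. It then bounds $\variance(\Mb|\cdot)\le 2\variance(\hat\Mb|\cdot)+2\variance(\Delta\Mb|\cdot)$ and treats the two pieces separately: the martingale piece telescopes exactly via the law of total variance $\mbE(\variance(z|\mathcal F))=\variance(z)-\variance(y)$, giving a contribution $\le C_0(k)$; the $\Delta$-piece is controlled at each level by $\mbE(||\Delta\Mb||_1|\t B_{\t m})\le ||\Delta\Mb(\emptyset)||_1\le\varepsilon$, which accumulates to $O(m\varepsilon)$ over the $m$ levels. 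Your approach avoids the decomposition entirely: you telescope $\int M_j^2$ directly along the chain $\t B_0,\dots,\t B_m$ and absorb the failure of the martingale identity into a single linear correction $4\bigl(\int_{\t B_0}M_j-\int_{\t B_m}M_j\bigr)$, which itself telescopes and is bounded once and for all by the global drop $\le\varepsilon$ rather than $m$ times a per-level drop. This is why you do not pick up the factor $m$ in front of $\varepsilon$. Both approaches hinge on the pointwise bound $M_j\le 2$ in the same way (to pass from a quadratic gap to a linear one), and neither uses sidedness, consistent with Remarks \ref{singrem1} and \ref{singrem2}.
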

\begin{proof}
Let $\h M_j$ be a martingale
generated by $M_j\uhr 2^n$ (i.e., $\h M_j$ agrees with
$M_j $ on $2^n$).
Note that $M_j$ dominates $\h M_j$ on $2^{\leq n}$.
Let $\Delta M_j =M_j-\h M_j $ and
$\Delta\Mb = (\Delta M_0,\cdots,\Delta M_{k-1})$.
Note that (by $(z+y)^2\leq 2z^2+2y^2$) for any \pf  set $B$,
\begin{align}\label{singeq6}
\variance(\Mb|B)\leq 2\variance(\h \Mb|B)
+2\variance(\Delta\Mb|B).
\end{align}
We need to estimate each term of (\ref{singeq6}).

Concerning the first term,
recall that
the variance of a random variable equals to the expectation of
its conditional variance plus the variance of its expectation.
More specifically, let $z$ be a random variable, $\mcal{F}$ be a filter
(say a finite collection of mutually disjoint events whose union is the whole probability space),
let $y = \mbE(z|\mcal{F})$; then $ \mbE(\variance(z|\mcal{F}))=\variance(z)-\variance(y)$.
Using this\footnote{Where we take $\t B_{\t m+1}$ as
the probability space, $\h \Mb\uhr \t B_{\t m+1}$ as the random variable $z$, $\{\t B_\rho:\rho\in \t B_{\t m}\}$
as the filter.} and by martingale of $\h M_j$, we have
$$
\int\nolimits_{\rho\in \t B_{\t m}}
\variance(\h \Mb|\t B_\rho) = \variance(\h\Mb|\t B_{\t m+1})- \variance(\h\Mb|\t B_{\t m}).
$$
Therefore, since $range(\h M_j)\subseteq [0,2]$, for some constant $C_0(k)$ depending on $k$ (say $C_0(k) = 4k$),
\begin{align}\label{singeq20}
\sum_{\t m<m}\int\nolimits_{\rho\in \t B_{\t m}}
\variance(\h\Mb|\t B_\rho)\leq C_0(k).
\end{align}

Concerning the second term, since $range(\Delta M_j)\subseteq [0,2]$,  we have
for every \pf  set $B$:
\begin{align}\label{singeq8}
\variance(\Delta\Mb|B)\leq \mbE\big(||\Delta \Mb||_2^2 \big|B\big)\leq
2\mbE\big(||\Delta \Mb ||_1 \big |B\big).
\end{align}
By supermartingale of $\Delta M_j$,
\begin{align}\label{singeq7}
\int_{\rho\in \t B_{\t m-1}}\mbE\big(||\Delta \Mb||_1\big|\t B_\rho\big)
 =& \mbE\big(||\Delta \Mb||_1\big|\t B_{\t m}\big)
 \\ \nonumber
 \leq& ||\Delta \Mb(\emptyset)||_1\leq \varepsilon.
 \end{align}
Combining (\ref{singeq6}), (\ref{singeq20}), (\ref{singeq8}), (\ref{singeq7}) concludes the proof.
\end{proof}

\alice\ will enumerate strings in $2^n$ by a specific fixed order
so that for many $\rho\in 2^{\leq n}$, there is a round $t$ such that
$B_\rho$ has been enumerated (by round $t$)  while nothing in $[\tau_\rho]^\preceq$
 is enumerated (by round $t$).
 To define this order, we fix an embedding $e:3^{\leq n/2}\rightarrow 2^{\leq n}$
inductively defined as follows:
\begin{align}
\nonumber
&e(\emptyset) =\emptyset;\text{ and
suppose $e(\alpha) = \rho$, then }\\ \nonumber
&e(\alpha 0 ) = \rho0, e(\alpha 1) = \rho 11,
e(\alpha 2) = \rho10.
\end{align}
Let $\leq_{lex}$ denote the lexicographical
order on $3^{n/2}$
(for example $ 01<_{lex}02<_{lex}10$).

 \begin{strategyn*}
\alice\ enumerates $(e(\alpha):\alpha\in 3^{n/2})$ in the lexicographical order on $3^{n/2}$
until some   $\rho\in 2^{\leq n}$ \emph{receives \typeaattention}.
When that happens, \alice\ then enumerates $2^n\setminus[\rho]^\preceq$.
 (End of   Strategy.)
\end{strategyn*}
Clearly there are two cases.

\noindent\textbf{Case (a)}: Some $\rho$ receives \typeaattention\ during the game.

As we argued in Fact \ref{singfactwinatt}, this means \alice\ win the game with $\m(A)<1$.

\noindent\textbf{Case (b)}: Otherwise.

We will derive a contradiction.
The strings \alice\ expects to receive \typeaattention\ are the elements
of $e(3^{\leq n/2})$.
Now since they don't, each of them will force \baby\ to allocate
a constant large \variancee.
To see this, for each $\rho\in e(3^{\leq n/2})$,
by the lexicographical order,
there is a round $t$ such that
\begin{align}\label{singeq14}
B_\rho\text{ is enumerated  while nothing in }[\tau_\rho]^\preceq
\text{ is enumerated.}
\end{align}
We refers to (\ref{singeq14}) as $\rho$ \emph{receives \potentialwin\ attention}.
Clearly
\begin{align}\label{singeq00}
1-\m(A[t]|\rho)= \m(\tau_\rho|\rho)=2^{-2}.
\end{align}
Now since $\rho$ does not receive \typeaattention
\footnote{This, namely (\ref{singeq00}), (\ref{singeq01}), is why it is crucial that nothing in $[\tau_\rho]^\preceq$
is enumerated when $B_\rho$ has been enumerated. And that's why we need to
enumerate in the lexicographical order.}
\begin{align}\label{singeq01}
1-||\Mb[t](\rho)||_1\geq 2^{-2}/a.
\end{align}
By Claim \ref{singclaim83}, for some constant $C(a)$
(say $C(a) = \frac{1}{2^4 C^2 a^2}$ where
$C$ is the constant in Claim \ref{singclaim83})
depending on $a$,
$$\variance(\Mb[t]|B_\rho)\geq C(a).$$
But $||\Mb[t](\t\tau)||_1\geq 1$  for each $\t\tau\in B_\rho$,
which means by the restriction rule II
(game  \ref{singdef4} second item),
$\variance(\Mb[\h t]|B_\rho)\geq C(a)-4\delta$\text{ for all }$\h t\geq t.$
Let
$\t B_\rho = B_\rho\cup\{\tau_\rho\}$,
since $\m(B_\rho)\geq \m(\t B_\rho)/2$ and $\delta$ is sufficiently small
(depending on $a$), we have
\begin{align}\label{singeq15}
\variance(\Mb[\h t]|\t B_\rho)\geq C(a)/4\text{ for all }\h t\geq t.
\end{align}
Let $\t B_m = e(3^m)$
(which verifies the setting of $\t B_m$
in Claim \ref{singclaim5})
 we have: for some round $t$ by the near end of the game,
 \begin{align}
 \label{singeq16}
 &\text{sufficiently many $\rho$ in $e(3^{\leq n/2})$,
  say every $\rho\in \cup_{m< n/4}e(3^{m})$,  }\\ \nonumber
  &\text{have received }
 \text{\potentialwin\ attention
 (see (\ref{singeq14}));
 and}\\ \label{singeq17}
 &\text{\alice\ has enumerated sufficiently many strings}.
 \end{align}
i.e.,
\begin{align}\label{singeq13}
\sum_{m<n/2-1}\int_{\rho\in \t B_m}\variance(\Mb[t]|\t B_\rho)
&\overset{(\ref{singeq15}),(\ref{singeq16})}{\geq} \frac{C(a)n}{16}  \text{ while }\\ \label{singeq18}
\int_{\sigma\in 2^n}||\Mb[t](\sigma)||_1&\overset{(\ref{singeq17})}{\geq} 1-\varepsilon.
\end{align}
Where   $\varepsilon$ is sufficiently small (say $o(C(a))$,
since it can  be chosen to be $2^{-n}$ and $n$ is sufficiently large depending on $a$).
Since $\emptyset$ does not receive \typeaattention, so
\begin{align}\label{singeq19}
||\Mb[t](\emptyset)||_1\leq 1.
\end{align}
But (\ref{singeq18}), (\ref{singeq19})
together with  Claim \ref{singclaim5} implies (for some absolute constant $C$)
\begin{align}\nonumber
\sum_{m<n/2-1}\int_{\rho\in \t B_m}\variance(\Mb[t]|\t B_\rho)
\leq C(1+n\varepsilon).
\end{align}
Thus a contradiction to (\ref{singeq13}).
By the choice of $C(a)$, we can choose  $n=O(a^2)$.

\section{Defeat   kastergales}
\label{singsecpartial}


Again, Theorem \ref{singth1} is reduced to the winning strategy of the following game.
Let $0\leq c\leq 1,n\in\omega$.
We will use $p$ to denote partial function
on $2^{<\omega}$ with $range(p)\subseteq 2$;
we say function $p_1$ \emph{extends} $p_0$
if $dom(p_0)\subseteq dom(p_1)$
and $p_1$ agrees with $p_0$ on $dom(p_0)$.

 \begin{definition}[$(c,n,k)$-partial-\sided-game]
 \label{singdef962}

At each round $t\in\omega$, \alice\ firstly enumerates a $\sigma\in 2^n$
(that has not been enumerated before);
then \baby\ presents a partial function $p_j[t]$ (for each $j<k$),
 $p_j[t]$-\sided\ supermartingale
$M_j[t]$ (for each $j<k$) such that the following hold:
\begin{itemize}
\item $p_j[t]$ extends $p_j[t-1]$
(we set $dom(p_j[-1])=\emptyset$);
\item $|| \Mb[t](\h\sigma)||_1\geq 1 $ for some $\h\sigma\preceq\sigma$;
\item $M_j[t]$ dominates $ M_j[t-1]$ on $2^{\leq n}$.

\end{itemize}
\alice\ wins the game if for some $t$, $||\Mb[t](\emptyset)||_1\geq  c$.
\end{definition}

Until \S  \ref{singsec100},
fix a $k\in\omega$ (unless claimed otherwise).

\begin{lemma}\label{singlem941}
For
every  $ c<1, \varepsilon>0$, there is an $n\in\omega$ such that
\alice\ has  a winning strategy   for
$(c,n,k )$-partial-\sided-game \ref{singdef962} such that $\m(A)\leq \varepsilon$.
\end{lemma}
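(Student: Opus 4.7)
The plan is to mirror the four-step scaffold of Section~\ref{singsec4} in the partial-sided setting with $k$ supermartingales. First I would state and prove the partial-sided analogues of Claims~\ref{singclaim0}, \ref{singclaim1}, \ref{singclaim2}, and \ref{singclaim4}, reducing Lemma~\ref{singlem941} successively to nested, dynamic-goal, and restricted-dynamic versions, culminating in the partial-sided analogue of Lemma~\ref{singlem43}. As emphasized in Remarks~\ref{singrem1} and \ref{singrem2}, these reductions rely only on the supermartingale property of the $M_j[t]$ and the monotonicity of their approximations, so they are insensitive to the particular sidedness structure. The only additional bookkeeping is closure of the partially-computably-sided property under positive linear combinations of supermartingales whose sidedness functions extend a common partial function, which is immediate from Definition~\ref{VPTxEMxdFT}.

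The main technical work lies in adapting the proof of Lemma~\ref{singlem43} from \S\ref{singsec0}. The global variance bookkeeping of Claim~\ref{singclaim5} is purely supermartingale-theoretic and generalizes to arbitrary $k$ with no change. The crux is the per-node variance-forcing inequality (Claim~\ref{singclaim83}), whose original proof exploits consistent single-sidedness at both $\rho$ and $\rho\ast 1$ to chain two inequalities into $M_j(\rho)\geq \min\{M_j(\rho\ast 0), M_j(\rho\ast 11)\}$. In the partial-sided setting baby may commit $p_j$ to opposite sides at successive nodes and break this chain, so a deeper probe is required.

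My plan is to place the probe $B_\rho$ and the reserved string $\tau_\rho$ at depth $d = d(k,a)$ below $\rho$, with $B_\rho$ consisting of all depth-$d$ descendants of $\rho$ except the single distinguished $\tau_\rho$. The key observation is that, whether $p_j(\sigma)$ is undefined, committed to $0$, or committed to $1$, one always has $M_j(\sigma)\geq \min\{M_j(\sigma\ast 0), M_j(\sigma\ast 1)\}$: the undefined case follows from the martingale equality of the two children, and each committed case combines supermartingale with the sidedness inequality. Iterating this uniform bound down the subtree yields $M_j(\rho)\geq \mbE(M_j\mid B_\rho\cup\{\tau_\rho\})$. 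Since $\|\Mb\|_1\geq 1$ on $B_\rho$ (propagated from the leaf-level restriction rule~I by supermartingale averaging), this gives the bound $1-\|\Mb(\rho)\|_1\leq (1-\|\Mb(\tau_\rho)\|_1)/2^d$. Hence a \potentialwin\ node $\rho$ that fails to receive winning attention --- so $1-\|\Mb(\rho)\|_1>(1/a)\cdot 2^{-d}$ --- is forced to satisfy $\|\Mb(\tau_\rho)\|_1<1-1/a$. Since $\|\Mb\|_1$ is pinned inside $[1,1+\delta]$ on $B_\rho$ by restriction rule~II with $\delta\ll 1/a$, the outlier value at $\tau_\rho$ then supplies a variance contribution $\variance(\Mb\mid B_\rho\cup\{\tau_\rho\})\geq C(a)/2^d$.

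The embedding $e\colon(2^d+1)^{\leq n/(d+1)}\to 2^{\leq n}$ replaces the ternary embedding of \S\ref{singsec0}, and enumeration in lexicographic order ensures every $\rho$ in its image receives \potentialwin\ attention at some round. Summing the per-probe variance lower bound $\Omega_a(2^{-d})$ across the $\Omega(n/(d+1))$ levels of the embedding and invoking the partial-sided analogue of Claim~\ref{singclaim5} produces a contradiction once $n\gg ka^2(d+1)2^d$. The main obstacle I anticipate is controlling the interaction between baby's adaptive commitments $(p_j[t])_{j<k}$ across the probe tree and the iterated min bound: baby may reshape her commitment pattern differently at different $\rho$'s, and one must verify that the per-probe variance lower bound holds uniformly in these choices. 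Because the per-node inequality $M_j(\sigma)\geq \min\{M_j(\sigma\ast 0), M_j(\sigma\ast 1)\}$ is uniform in the commitment, the iterated bound survives this reshaping, and the argument closes as in \S\ref{singsec0}.
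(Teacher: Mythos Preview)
Your scaffold through nesting, dynamic goal, and restriction is correct and matches the paper exactly---those reductions are indeed sidedness-agnostic. The gap is in your adaptation of the variance-forcing step (the analogue of Claim~\ref{singclaim83}).

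Your per-probe inequality $1-\|\Mb(\rho)\|_1\leq 2^{-d}(1-\|\Mb(\tau_\rho)\|_1)$ is just the supermartingale average over $B_\rho\cup\{\tau_\rho\}$; it uses no sidedness at all (your ``iterated min'' observation is redundant here). Consequently the variance you extract lives on the full set $B_\rho\cup\{\tau_\rho\}$ and is supplied entirely by the outlier value at the \emph{unenumerated} node $\tau_\rho$. That variance is \emph{transient}: once the recursion descends into $\tau_\rho$ and enumerates leaves below it, restriction rule~I forces $\|\Mb(\tau_\rho)\|_1$ up to~$1$ and the variance vanishes. But Claim~\ref{singclaim5} bounds the sum of variances at a \emph{single} late time~$t$, so you need the per-probe lower bound to persist to the end of the game, as it does in \S\ref{singsec0} where the variance is confined to the \emph{pinned} set $B_\rho$ and is created by the single-sidedness itself. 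A quick sanity check: your argument would apply verbatim to an arbitrary \lce supermartingale (take $k=1$, commit $p_0$ toward $\tau_\rho$ at each branching, and set $M_0\equiv 1$ on $B_\rho$, $M_0(\tau_\rho)=0$), yet such supermartingales define 1-randomness, so the argument cannot be sound.

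The paper does not try to force persistent variance by a local combinatorial probe. Instead it inserts an additional reduction layer: a \emph{variance game} (Definition~\ref{singdef965}) in which Alice may also win by forcing $\variance(\Mb[t]\mid A[t])\geq\Delta$, reduces the restricted dynamic game to it (Claim~\ref{singclaim966}), and then proves the variance game by \emph{induction on~$k$}. The base case $k=1$ exploits partial-sidedness directly (Strategy~\ref{singstrategy0}: enumerate inside one half-space until $p_0(\emptyset)$ commits, then play the losing child). The inductive step uses the already-established Lemma~\ref{singlem941} for $k-1$ supermartingales, leveraging the type-(b) criterion to freeze the first coordinate. This induction is the essential new idea you are missing.
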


Again, we reduce (by nesting,
dynamic goal and restricting \sided\ player's action)
partial-\sided-game \ref{singdef962} to the corresponding
restricted dynamic partial-\sided-game \ref{singdef964}
(and Lemma \ref{singlem941} to Lemma \ref{singlem943}).
This part, \S \ref{singsecidr2}, is the same as
\S \ref{singsec2}-\S \ref{singsec1}.
 The new ingredients are the following.

\begin{enumerate}[(1)]
\item The idea
of Lemma \ref{singlem43} is to force \baby\ to allocate
a constant large variance on $2^{|\rho|+2}\cap [\rho]^\preceq$ for many
$\rho\in 2^{\leq n}$.  This is done by enumerating $B_\rho$ while
  nothing in $[\tau_\rho]^\preceq$ is enumerated.
For kastergale, there isn't such a simple strategy
(to force \baby\ to allocate variance on $2^{|\rho|+m}\cap [\rho]^\preceq$).
However, assuming a strategy to force that,
we show that \alice\ can win the restricted dynamic partial-\sided-game \ref{singdef964}
(so Lemma \ref{singlem943}--a winning strategy for
restricted dynamic game \ref{singdef962}, is reduced to Lemma \ref{singlem966}--a
 winning strategy for \variancee\ game \ref{singdef965}).

\item
An inductive (in $k$) proof that \alice\ has a winning strategy for   \variancee\ game \ref{singdef965}.
\end{enumerate}
The following diagram illustrates the framework of the new ingredients.

\begin{align}\nonumber
 &\text{Lemma \ref{singlem943}}
 \overset{Claim \ref{singclaim966}}{\hookrightarrow }
 \text{Lemma \ref{singlem966}
  (reduce to \variancee\ game \ref{singdef965}, \S  \ref{singseccatching})};
  \\ \nonumber
  &\text{A winning strategy for the \variancee\ game  \ref{singdef965}
  (proof of Lemma \ref{singlem966}, \S  \ref{singsec100})}.
\end{align}

\begin{proof}[Proof of Theorem \ref{singth1} using Lemma \ref{singlem941}]
The same as how Theorem \ref{singth3} is proved using Lemma \ref{singlem41}.
\end{proof}

Notably, other than a small portion
of the proof of Lemma \ref{singlem966}
(that \alice\ has a winning strategy for the \variancee\ game
\ref{singdef965} when $k=1$),
none of the ingredients    relies on partial-\sided ness
but the nondecreasing and supermartingale of $M_j[t]$.

\subsection{Nesting, dynamic goal and restrictions}
\label{singsecidr2}
We first note a nesting property of
the $(c,n,k )$-partial-\sided-game \ref{singdef962}.
By the same argument we used for Claim \ref{singclaim0} we have:
\begin{claim}\label{singclaim960}
Suppose
\alice\ has a winning strategy for the $(c_j,n_j,k )$-partial-\sided-game \ref{singdef962} such that
$\m(A)\leq \varepsilon_j$
(for each $j<2$).
Then \alice\ has a winning strategy for $(c_0c_1,n_0+n_1,k)$-partial-\sided-game \ref{singdef962}
such that $\m(A)\leq \varepsilon_0\varepsilon_1$.
\end{claim}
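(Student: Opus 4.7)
The plan is to adapt the nesting construction of Claim \ref{singclaim0} essentially verbatim, leveraging the fact that its proof — like those of Claims \ref{singclaim2} and \ref{singclaim4} (see Remarks \ref{singrem1} and \ref{singrem2}) — uses only the supermartingale inequality, the nondecreasingness of $M_j[t]$ in $t$, and extension-monotonicity of auxiliary data, none of which are tied to single-\sided ness. As a preliminary, I would generalise Definition \ref{singdef962} to a $(c,d,n,k)$-partial-\sided-game in which $1$ is replaced by $d$ in the \catchingpoint\ requirement, and observe that rescaling $M_j[t] \mapsto M_j[t]/\hat c$ yields an equivalence between $(c,d,n,k)$- and $(c\hat c, d\hat c, n, k)$-partial-\sided-games; in particular, a winning strategy for the $(c_0,n_0,k)$-partial-\sided-game (with cost $\varepsilon_0$) gives a winning strategy for the $(c_0 c_1, c_1, n_0, k)$-partial-\sided-game (with the same cost).

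Next I would introduce, for each $\rho \in 2^{n_0}$, the shifted sub-game $(c_1, n_1, k)^{\rho}$-partial-\sided-game, by replacing every string $\sigma$ of Definition \ref{singdef962} with $\rho \ast \sigma$. The key point to verify is that any move by \baby\ in the ambient $(c_0 c_1, n_0 + n_1, k)$-partial-\sided-game, when restricted to $[\rho]^\preceq \cap 2^{\leq n_0 + n_1}$ and shifted, is a legal move of this sub-game: $p_j[t]$-\sided ness of $M_j[t]$ restricts to $p_j^{\rho}[t]$-\sided ness of the restricted $M_j[t]$ on the shifted frame, extension-monotonicity of $p_j[t]$ and nondecreasingness of $M_j[t]$ in $t$ pass to the restriction, and the supermartingale inequality restricts to $[\rho]^\preceq$ with no conflict from what \baby\ does outside $[\rho]^\preceq$. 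Running the hypothesised $(c_1, n_1, k)$-winning strategy in the shifted frame then produces, exactly as in Claim \ref{singclaim0}, one of two outcomes: either some enumerated $\sigma \succeq \rho$ already admits a \catchingpoint\ $\hat\sigma \preceq \rho$, or every \catchingpoint\ lies strictly above $\rho$ and the sub-game's victory condition forces $||\Mb[t](\rho)||_1 \geq c_1$. Either way $\rho$ ends up with a \catchingpoint\ at outer threshold $c_1$, at a cost of at most $\varepsilon_1 \m(\rho)$.

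With this lemma in hand, the nested strategy is built exactly as in Claim \ref{singclaim0}: \alice\ simulates the winning strategy for the $(c_0 c_1, c_1, n_0, k)$-partial-\sided-game at level $n_0$, and whenever that simulator instructs a fictitious enumeration of $\rho \in 2^{n_0}$, \alice\ instead plays out the sub-game $(c_1, n_1, k)^{\rho}$ on $[\rho]^\preceq \cap 2^{n_0 + n_1}$. The sub-game outcome just described is a legal fictitious response by \baby\ on $\rho$ in the outer scaled game, so the simulator forces $||\Mb[t](\emptyset)||_1 \geq c_0 c_1$ at some round, while the total cost is bounded by $\sum_{\rho} \varepsilon_1 \m(\rho) \leq \varepsilon_0 \varepsilon_1$.

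The only point I would expect to require care specific to the partial-\sided\ setting is the coherence of the $p_j$'s across distinct sub-games, because unlike the supermartingales the $p_j$'s must extend monotonically throughout the entire ambient game. But the sub-games for different $\rho \in 2^{n_0}$ act on pairwise-disjoint sets $[\rho]^\preceq \cap 2^{\geq n_0}$, so any extension made to $p_j$ inside one sub-game defines values only on strings irrelevant to any other sub-game; the restriction $p_j^{\rho}$ seen by a given sub-game thus grows only in response to that sub-game's own moves. Hence the argument of Claim \ref{singclaim0} carries over with no essential modification.
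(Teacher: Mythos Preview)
Your proposal is correct and follows exactly the approach the paper intends: the paper's proof of this claim is simply the sentence ``By the same argument we used for Claim \ref{singclaim0},'' and you have spelled out that argument together with the one partial-\sided-specific point (coherence of the $p_j$'s across disjoint sub-games) that the paper leaves implicit.
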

By the argument we used for Claim \ref{singclaim1} we have:
\begin{claim}\label{singclaim961}
Suppose for every $a>0$, there is a $\delta(a)>0$
(depending on $a$) such that for every $0<\delta<\delta(a)$,
 there is an  $n\in\omega$ such that
 \alice\ has a winning strategy for
the $(1-\delta,n,k)$-partial-\sided-game \ref{singdef962}
such that $\m(A)\leq 1-a\delta$.
Then for every $  c<1, \varepsilon>0$, there is an $n\in\omega$ such that
\alice\ has a winning strategy for   $(c,n,k)$-partial-\sided-game \ref{singdef962} such that
$\m(A)\leq  \varepsilon$.
\end{claim}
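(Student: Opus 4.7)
This claim is the partial-\sided\ analogue of Claim~\ref{singclaim1}, and the plan is to argue by exactly the same parameter-chase, with Claim~\ref{singclaim960} in place of Claim~\ref{singclaim0}. Fix $c<1$ and $\varepsilon>0$. The first step is to select $a>0$, $\delta\in(0,\delta(a))$, and an integer $k'\in\omega$ jointly satisfying
\[
(1-\delta)^{k'}\geq c\qquad\text{and}\qquad (1-a\delta)^{k'}\leq\varepsilon.
\]
Such a choice is possible: first take $a$ large enough that $a\log(1/c)>\log(1/\varepsilon)$; then, since $\log(1/(1-\delta))\sim\delta$ and $\log(1/(1-a\delta))\sim a\delta$ as $\delta\to 0^+$, the interval
\[
\Bigl[\ \frac{\log(1/\varepsilon)}{\log(1/(1-a\delta))}\ ,\ \frac{\log(1/c)}{\log(1/(1-\delta))}\ \Bigr]
\]
has length tending to $+\infty$ as $\delta\to 0^+$, hence for all sufficiently small $\delta>0$ (in particular for some $\delta<\delta(a)$) it contains an integer $k'$.

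Having fixed $a,\delta,k'$, invoke the hypothesis of the claim to obtain $n_0\in\omega$ such that Alice has a winning strategy for the $(1-\delta,n_0,k)$-partial-\sided-game with $\m(A)\leq 1-a\delta$. Now apply Claim~\ref{singclaim960} iteratively $k'-1$ times, at each step nesting the current winning strategy with a fresh copy of the $(1-\delta,n_0,k)$-strategy: after $j$ such nestings Alice has a winning strategy for the $((1-\delta)^{j+1},(j+1)n_0,k)$-partial-\sided-game with cost $\m(A)\leq (1-a\delta)^{j+1}$. After $k'-1$ nestings we obtain a winning strategy for the $((1-\delta)^{k'},k'n_0,k)$-partial-\sided-game with $\m(A)\leq (1-a\delta)^{k'}$. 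Since $(1-\delta)^{k'}\geq c$, any such strategy is \emph{a fortiori} a winning strategy for the $(c,k'n_0,k)$-partial-\sided-game (the winning threshold in Definition~\ref{singdef962} is monotone in the target), and $(1-a\delta)^{k'}\leq\varepsilon$ supplies the required control on $\m(A)$. Taking $n:=k'n_0$ completes the argument.

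There is no genuine obstacle here: the whole argument is a routine parameter-chase sitting on top of the nesting claim. The one item worth sanity-checking is that each application of Claim~\ref{singclaim960} preserves partial-computable-\sided ness along all $k$ coordinates simultaneously; but this is exactly what Claim~\ref{singclaim960} asserts, since its shift-and-simulate construction (the direct analogue of the one in Claim~\ref{singclaim0}) leaves Baby's partial functions $p_j[t]$ unchanged and only reshapes which strings Alice enumerates.
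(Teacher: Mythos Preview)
Your proof is correct and follows exactly the paper's approach: the paper simply states that Claim~\ref{singclaim961} holds ``by the argument we used for Claim~\ref{singclaim1},'' and your write-up is a careful spelling-out of precisely that argument (choose $a,\delta,k'$ with $(1-\delta)^{k'}\geq c$ and $(1-a\delta)^{k'}\leq\varepsilon$, then iterate Claim~\ref{singclaim960}). Your use of $k'$ for the iteration count, rather than $k$, is a sensible notational choice here since $k$ is already taken by the number of supermartingales in Definition~\ref{singdef962}.
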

By  Claim \ref{singclaim961},
to prove Lemma \ref{singlem941}, it suffices to prove the following.

\begin{lemma}\label{singlem942}
For every $a>0$, there is a $\delta(a)>0$
(depending on $a$) such that for every $0<\delta<\delta(a)$,
 there is an $n\in\omega$ such that
 \alice\ has a winning strategy for
$(1-\delta,n)$-partial-\sided-game \ref{singdef962}
such that $\m(A)\leq 1-a\delta$.
\end{lemma}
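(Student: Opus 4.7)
The plan is to mirror the reduction chain used for Lemma \ref{singlem42} in \S\ref{singsec4}, whose structure is already announced by the diagram at the start of \S\ref{singsecpartial}. Fix $a>0$. By Remark \ref{singrem1}, the dynamic-goal reduction of Claim \ref{singclaim2} uses only that the $M_j[t]$ are supermartingales nondecreasing in $t$, so the very same argument, applied to kastergales $M_0,\dots,M_{k-1}$, reduces Lemma \ref{singlem942} to providing \alice\ with a winning strategy for the dynamic $(a,n,k)$-partial-\sided-game with $\m(A)<1$. Likewise, by Remark \ref{singrem2}, the restriction step of Claim \ref{singclaim4} goes through verbatim, further reducing the task to the kastergale analog of Lemma \ref{singlem43}, namely Lemma \ref{singlem943}: for some $n\in\omega$ and $\delta>0$, \alice\ wins the restricted dynamic $(a,\delta,n,k)$-partial-\sided-game \ref{singdef964} with $\m(A)<1$.

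The substantive new content lies in Lemma \ref{singlem943}. In the single-sided proof (\S\ref{singsec0}), enumerating $B_\rho=\{\rho11,\rho0\}$ while reserving $\tau_\rho=\rho10$ forced a constant lower bound on $\variance(\Mb|B_\rho)$ via Claim \ref{singclaim83}, because $i$-\sided ness yields $M_i(\emptyset)\geq \min\{M_i(0),M_i(11)\}$. This one-shot trick fails for kastergales: \baby\ may defer choosing $p_j(\rho)$ and then commit it adaptively, aligning the favored bit with whichever branch \alice\ has just revealed. My plan, matching the subsection diagram, is to decouple variance-forcing from the dynamic game by introducing a purely combinatorial \variancee-game (Lemma \ref{singlem966}) in which \alice's only obligation is to force \baby\ to allocate a constant total variance on many subtrees, and then to use Claim \ref{singclaim966} to convert a winning strategy for that game into one for the restricted dynamic partial-\sided-game by exactly the variance accounting of Claim \ref{singclaim5}.

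The main obstacle is proving Lemma \ref{singlem966}, which I expect to do by induction on $k$. For the base $k=1$, my plan is to play the single-sided strategy of \S\ref{singsec0} on the maximal subtree where \baby\ has not yet committed $p_0$: on such a subtree the kastergale is simultaneously $0$- and $1$-\sided at every node, so Claim \ref{singclaim83} applies and forces variance exactly as before; any commitment of $p_0$ at some $\rho$ can be absorbed by restarting the argument in the complementary subtree, where the kastergale is now a computably-\sided object and the bound can be propagated upward. For the inductive step, whenever \baby\ commits some $p_j(\rho)$ the subgame on $[\rho]^\preceq$ effectively involves only $k-1$ undetermined \sideindicator s, so the inductive hypothesis supplies a sub-strategy. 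The delicate part is the bookkeeping: the total variance accumulated across all subtrees and commitment histories must exceed the upper bound $C(k)(1+n\varepsilon)$ of Claim \ref{singclaim5}, which forces the branching factor and $n$ to grow rapidly in $k$ and $1/a$. Choosing these parameters so that the variance lower bound survives all nested recursions is where I expect the bulk of the technical effort to lie.
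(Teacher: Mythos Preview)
Your reduction chain is exactly the paper's: Claims \ref{singclaim962} and \ref{singclaim964} are indeed verbatim copies of Claims \ref{singclaim2} and \ref{singclaim4}, and Claim \ref{singclaim966} reduces Lemma \ref{singlem943} to the \variancee\ game of Lemma \ref{singlem966}. The substantive content is, as you say, Lemma \ref{singlem966}, and here your plan diverges from the paper and contains a genuine gap.

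For $k=1$ your description is muddled. On a subtree where $p_0$ is nowhere defined, $M_0$ satisfies $M_0(\sigma 0)=M_0(\sigma 1)$ everywhere, so it is \emph{constant} along every path; Claim \ref{singclaim83} then yields zero variance, not ``variance exactly as before''. What one actually gets is a direct type-(a) win. The paper exploits this much more simply (Strategy \ref{singstrategy0}): enumerate all of $([1]^\preceq\cap 2^m)\setminus\{1^m\}$, forcing \baby\ either to commit $p_0(\emptyset)$ (whereupon \alice\ enumerates the non-favored child and wins with $M_0(\emptyset)\geq 1$) or to leave it undefined (whereupon $M_0(\emptyset)=M_0(1)\geq 1-2^{-(m-1)}$ while $1-\m(A)\geq 1/2$). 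No variance is forced at the base level at all.

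Your inductive step is where the real problem lies. Committing $p_j(\rho)$ fixes the favored bit \emph{only at the single node} $\rho$; on $[\rho]^\preceq$ the function $p_j$ remains undefined at every proper extension and can still be set adaptively. So the subgame on $[\rho]^\preceq$ is still a full $k$-kastergale game, not a $(k-1)$-game, and your inductive hypothesis does not apply. The paper's induction uses a completely different mechanism: assuming type-(b) has not occurred, each $M_j$ is within $\hat\Delta$ of a constant on the enumerated strings. This near-constancy of $M_0$ (not any commitment of $p_0$) is what decouples it from $M_1,\dots,M_{k-1}$; \alice\ then plays the full Lemma \ref{singlem941} strategy for $k-1$ against $M_1,\dots,M_{k-1}$ in phase 1, and separately against $M_0$ in phase 2, combining the outcomes to get $||\Mb(\emptyset)||_1\geq c-2\hat\Delta$ with cost $\leq 2\varepsilon$. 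Note that the inductive hypothesis is Lemma \ref{singlem941} for $k-1$ (arbitrary $c<1$, $\varepsilon>0$), not merely Lemma \ref{singlem966} for $k-1$.
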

Again, we reduce the $(c,n)$-partial-\sided-game \ref{singdef962}
to a dynamic game.
 \begin{definition}[Dynamic $(a,n, k )$-partial-\sided-game]
 \label{singdef963}
Let $a>0, n\in\omega$.
At each round $t\in\omega$, \alice\ firstly enumerates a $\sigma\in 2^n$
(that has not been enumerated before);
then \baby\ presents $p_j[t]$,
 $p_j[t]$-\sided\ supermartingale
$M_j[t]$ such that the following hold:
\begin{itemize}
\item $p_j[t]$ extends $p_j[t-1]$;
\item $||\Mb[t](\h\sigma)||_1\geq 1 $ for some $\h\sigma\preceq\sigma$;
\item $M_j[t]$ dominates $ M_j[t-1]$ on $2^{\leq n}$.
\end{itemize}
Let $A[t]$ denote the set of $\sigma$ \alice\ enumerated by round $t$.
\alice\ wins the game if for some $t$, $1-||\Mb [t](\emptyset)||_1
\leq \frac{1}{a}(1-\m(A[t]))$.
\end{definition}
\begin{claim}\label{singclaim962}
Let $a,\varepsilon>0, \h n\in\omega$. 
Suppose
\alice\ has a winning strategy for the dynamic
$(a,\h n ,k )$-partial-\sided-game \ref{singdef963}
such that $\m(A)\leq 1-\varepsilon$.
Then for every $0<\delta\leq \varepsilon/2a$,
there is an $n$ such that
\alice\ has a winning strategy for
$(1-2\delta,n,k)$-partial-\sided-game \ref{singdef962} such that
$\m(A)\leq 1-a\delta$.
\end{claim}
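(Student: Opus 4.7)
The plan is to mimic the proof of Claim~\ref{singclaim2} almost verbatim, since the argument there (as observed in Remark~\ref{singrem1}) only uses the fact that the supermartingales are nondecreasing in $t$ and satisfy the supermartingale inequality. The presence of the partial functions $p_j[t]$ does not interfere, because the ``extends'' condition on $p_j[t]$ is a monotone condition that restricts harmlessly to any subtree $[\rho]^\preceq$.

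More concretely, I would first pick $\t n$ so large that $2^{-\t n}<a\delta$, and design a strategy for the $(1-2\delta,\t n+\h n,k)$-partial-\sided-game as follows. For each $\rho\in 2^{\t n}$, Alice runs the given winning strategy of the dynamic $(a,\h n,k)$-partial-\sided-game \emph{over} $[\rho]^\preceq\cap 2^{\t n+\h n}$ as a shifted sub-game. Exactly as in Claim~\ref{singclaim2}, this forces one of two outcomes: either $\rho$ acquires a \catchingpoint\ (so $\|\Mb[t](\h\rho)\|_1\geq 1$ for some $\h\rho\preceq\rho$), or the local dynamic criterion
\[
1-\|\Mb[t](\rho)\|_1\leq \tfrac{1}{a}\bigl(1-\m(\t A_\rho|\rho)\bigr)
\]
holds, where $\t A_\rho$ is the set of strings enumerated inside the sub-game on $[\rho]^\preceq$.

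Alice plays these sub-games in some fixed order over $\rho\in 2^{\t n}$, monitoring the aggregate quantity $\int_{\rho\in \t A[\t t]}(1-\m(\t A_\rho|\rho))$, where $\t A[\t t]$ is the set of $\rho$ over which the sub-game has been initiated by round $\t t$. She continues until this aggregate first reaches $\geq a\delta$; the hypothesis that each sub-game costs at most $1-\varepsilon$ (so $1-\m(\t A_\rho|\rho)\geq \varepsilon\geq 2a\delta$) guarantees this threshold is reached, and by the choice of $\t n$ the overshoot is bounded, giving $\int_{\rho\in\t A[\t t]}(1-\m(\t A_\rho|\rho))\leq 2a\delta$. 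At that moment Alice enumerates all of $2^{\t n}\setminus\t A[\t t]$ (lifted to $2^{\t n+\h n}$), which forces every remaining $\rho$ to acquire a \catchingpoint.

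The final bookkeeping is then identical to Claim~\ref{singclaim2}: a \pf\ set $B\subseteq 2^{\leq \t n}$ covers $2^{\t n}\setminus\t A[\t t]$ with $\|\Mb[t](\h\rho)\|_1\geq 1$ on $B$, while on $\t A[\t t]\setminus [B]^\preceq$ the local dynamic inequality holds. The supermartingale property of each $M_j[t]$ then yields
\[
1-\|\Mb[t](\emptyset)\|_1\leq \int_{\rho\in\t A[\t t]\setminus[B]^\preceq}\tfrac{1}{a}(1-\m(\t A_\rho|\rho))\leq 2\delta,
\]
and the cost satisfies $1-\m(A)=\int_{\rho\in\t A[\t t]}(1-\m(\t A_\rho|\rho))\geq a\delta$. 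I do not anticipate any genuine obstacle: the only point that deserves a one-line check is that restricting Baby's monotone sequences $(p_j[t],M_j[t])$ to the subtree $[\rho]^\preceq$ still yields a legitimate play in the sub-game, which is immediate because both the ``$p_j[t]$ extends $p_j[t-1]$'' and the ``$M_j[t]$ dominates $M_j[t-1]$ and is $p_j[t]$-\sided'' conditions are inherited by any subtree.
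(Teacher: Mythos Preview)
Your proposal is correct and matches the paper's own proof exactly: the paper simply writes ``Exactly as Claim~\ref{singclaim2} since in Claim~\ref{singclaim2} we merely use the fact that $M_j[t]$ is supermartingale and nondecreasing,'' and you have spelled out that same argument, including the observation that the monotone conditions on $p_j[t]$ and $M_j[t]$ restrict to any subtree $[\rho]^\preceq$.
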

\begin{proof}
Exactly  as Claim \ref{singclaim2}
since in Claim \ref{singclaim2} we merely use the
fact that $M_j[t]$ is supermartingale and nondecreasing.
\end{proof}

Thus
   Lemma \ref{singlem942} is reduced to
the following:
\begin{lemma}\label{singques960}
For every $a>0$, there is an $n\in\omega$ such that
\alice\ has a winning strategy for   dynamic $(a,n,k)$-partial-\sided-game \ref{singdef963}
such that $\m(A)<1$.
\end{lemma}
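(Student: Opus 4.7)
The plan is to mirror the three-step reduction of \S\ref{singsec4} that derived Lemma \ref{singques0} via a variance analysis on a restricted dynamic game. Adapted to the partial-sided setting, the roadmap is to reduce Lemma \ref{singques960} first to a restricted dynamic partial-sided game (Lemma \ref{singlem943}), then to a variance-forcing game (Lemma \ref{singlem966}), and finally to prove the winning strategy for that variance game by induction on $k$.

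For the restriction step, I would introduce the restricted dynamic $(a,\delta,n,k)$-partial-sided-game of Definition \ref{singdef964}, the natural analogue of Definition \ref{singdef4}, in which Baby is additionally required to satisfy $||\Mb[t](\sigma)||_1 \geq 1$ and $||\Mb[t](\rho)||_1 \leq 1+\delta$ for all $\rho \in 2^{\leq n}$. Since the reduction of Claim \ref{singclaim4} invokes only the supermartingale and nondecreasing properties of $M_j[t]$ (Remark \ref{singrem2}), the identical strategy transfers verbatim: Alice reserves a witness $\sigma 0^{\hat n}$ and enumerates the rest of $[\sigma]^\preceq \cap 2^{\tilde n+\hat n}$, so that any violation of either restriction rule forces a winning-attention event at some $\rho$, letting Alice close out with $\m(A) < 1$ via Fact \ref{singfactwinatt}.

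For the variance reduction, I would follow the schema of \S\ref{singsec0}: Claim \ref{singclaim5} bounds the total weighted variance of any bounded vector supermartingale across a prefix-free hierarchy by $O(1+m\varepsilon)$, so if Alice can force Baby to allocate constant variance on $\Theta(n)$ disjoint subtrees without any $\rho$ receiving winning attention, a contradiction closes the case. What changes in the kastergale setting is that Baby may postpone the sided-ness commitment $p_j(\rho)$ indefinitely, so the three-string gadget $B_\rho = \{\rho 0, \rho 11\}$ with $\tau_\rho = \rho 10$ that worked via Claim \ref{singclaim83} no longer directly forces variance on $B_\rho$. The variance-forcing game of Definition \ref{singdef965} is engineered to specify exactly the enforcement task Alice needs against partial-sided opponents, and Claim \ref{singclaim966} then packages its winning strategy into a winning strategy for the restricted dynamic partial-sided game.

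The main obstacle is Lemma \ref{singlem966}, which I would prove by induction on $k$. The base case $k=1$ is where partial-sided-ness is genuinely exploited: with a single kastergale, Alice must engineer a configuration in which Baby's eventual irrevocable sided-ness commitment at $\rho$ conflicts with the supermartingale inequality at both children, so that whichever side Baby commits to produces a constant local variance. The inductive step treats $M_{k-1}$ as a perturbation on top of $(M_0, \dots, M_{k-2})$, applying the inductive strategy inside many disjoint subtrees and balancing the extra catching pressure from $M_{k-1}$ against the variance ceiling of Claim \ref{singclaim5}. The delicate part is designing the enumeration order so that Alice's moves are robust against Baby's adaptive timing of sided-ness declarations for all $k$ functions simultaneously; once that is secured, aggregation against Claim \ref{singclaim5} is largely mechanical.
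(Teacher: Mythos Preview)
Your reduction chain matches the paper exactly: Lemma~\ref{singques960} is reduced to the restricted dynamic game (Lemma~\ref{singlem943}) and then to the variance game (Lemma~\ref{singlem966}) via Claims~\ref{singclaim964} and~\ref{singclaim966}, both of which indeed use only the supermartingale and nondecreasing properties.

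Your account of how to \emph{win} the variance game, however, diverges from the paper and contains gaps. For the base case $k=1$ you propose forcing a sided-ness commitment whose either branch yields constant local variance. The paper does not force variance here at all: Alice enumerates $([1]^\preceq\cap 2^m)\setminus\{1^m\}$ and wins type-(a) either way. If $p_0(\emptyset)$ is ever declared as $1-i$, she enumerates $i$ and gets $M_0(\emptyset)\geq M_0(i)\geq 1$ with $\m(A)<1$; if $p_0(\emptyset)$ stays undefined then $M_0(\emptyset)=M_0(1)\geq 1-2^{-(m-1)}$ while $1-\m(A)\geq 1/2$. Your sketch does not cover the case where Baby never commits, which is exactly the freedom partial-sided-ness grants.

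For the inductive step, the paper neither invokes Claim~\ref{singclaim5} again nor treats one martingale as a perturbation. The induction is not internal to Lemma~\ref{singlem966}: the hypothesis used is the full Lemma~\ref{singlem941} at level $k-1$ (arbitrarily small cost in the original game), obtained by running the whole reduction chain at $k-1$. The mechanism is that if type-(b) never fires, every $M_j$ is within $\hat\Delta$ of a constant on $A[t]$, hence $\sum_{j>0}M_j\geq 1-c_0-\hat\Delta$ on every enumerated leaf. Alice plays in phase~1 the $(c,m/2,k-1)$-game strategy against $(M_1,\dots,M_{k-1})$ on $2^{m/2}$ with cost $\leq\varepsilon$, then in phase~2 the same strategy against $M_0$ alone on each remaining $\rho\in 2^{m/2}\setminus\tilde A$. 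This forces $\|\Mb(\emptyset)\|_1\geq c-2\hat\Delta$ with $\m(A)\leq 2\varepsilon$, a type-(a) win. Your description (``balancing catching pressure against the variance ceiling of Claim~\ref{singclaim5}'') does not correspond to this argument.
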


Again, we simplify  Lemma \ref{singques960} by imposing restrictions
on \baby's action.

\begin{definition}[Restricted dynamic $(a, \delta, n,k )$-partial-\sided-game]
 \label{singdef964}

At each round $t\in\omega$, \alice\ firstly enumerates a $\sigma\in 2^n$
(that has not been enumerated before);
then \baby\ present $p_j[t]$, $p_j[t]$-\sided\ supermartingale
$M_j[t]$ such that the following hold:
\begin{itemize}
\item $p_j[t]$ extends $p_j[t-1]$;
\item $||\Mb[t](\sigma)||_1\geq 1 $;
\item $||\Mb[t](\rho)||_1\leq 1+\delta$
for all $\rho\in 2^{\leq n}$;
\item $M_j[t]$ dominates $ M_j[t-1]$ on $2^{\leq n}$.
\end{itemize}
Let $A[t]$ denote the set of $\sigma$ \alice\ enumerated by round $t$.
\alice\ wins the game if for some $t$, $1-||\Mb[t](\emptyset)||_1
\leq \frac{1}{a}(1-\m(A[t]))$.
\end{definition}
By the same argument we used for Claim \ref{singclaim4} we have:

\begin{claim}\label{singclaim964}
Let $a,\delta>0, \t n\in \omega$.
Suppose \alice\ has a winning strategy for
the restricted dynamic $(a,\delta, \t n,k )$-partial-\sided-game \ref{singdef964}
such that $\m(A)< 1$.
Then  there is an $n\in\omega$ such that \alice\ has a winning
 strategy for
the  dynamic $(a/2,n,k )$-partial-\sided-game \ref{singdef963}
such that $\m(A)<1$.
\end{claim}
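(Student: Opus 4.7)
The plan is to essentially replay the proof of Claim \ref{singclaim4}, since (as Remark \ref{singrem2} emphasizes) that argument never used single-\sided ness, only the fact that each $M_i[t]$ is a supermartingale nondecreasing in $t$. The only new feature in the partial-\sided\ setting is the presence of partial functions $p_j[t]$ with $p_j[t]$ extending $p_j[t-1]$, but this monotonicity condition is harmless: whatever \baby\ commits to in the sub-game on $2^{\tilde n}$ restricts her consistently throughout the simulation, which is exactly what we need.

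Concretely, I would choose $\hat n$ large enough so that $\Delta := 2^{-\hat n}$ is small compared with $a,\delta,\tilde n$. For each $\rho \in 2^{\tilde n}$, instead of having \alice\ enumerate $\rho$ itself in an imagined sub-game on $2^{\tilde n}$, she enumerates $D_\rho := ([\rho]^{\preceq}\cap 2^{\tilde n+\hat n})\setminus\{\rho 0^{\hat n}\}$, reserving the single string $\rho 0^{\hat n}$ as a witness that $[\rho]^{\preceq}$ is not fully covered. This enumeration forces, exactly as in \eqref{singeqres1}--\eqref{singeqres0}, that either $\rho$ admits a \catchingpoint\ or $||\Mb[t](\rho)||_1 \geq 1-\Delta$.

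Then, on $2^{\tilde n}$, \alice\ runs the winning strategy of the $(1-\Delta)$-scaled restricted dynamic $(a,\delta,\tilde n,k)$-partial-\sided-game \ref{singdef964}, but substitutes $D_\rho$-enumerations for the $\rho$-enumerations it prescribes, halting as soon as some $\rho\in 2^{\leq \tilde n+\hat n}$ receives \typeaattention\ (with respect to the outer dynamic $(a/2,\tilde n+\hat n,k)$-partial-\sided-game). In that event she enumerates $2^{\tilde n+\hat n}\setminus[\rho]^{\preceq}$, and Fact \ref{singfactwinatt} (which again depends only on supermartingale/monotonicity) wraps up the victory with $\m(A)<1$. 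The three-case analysis of Claim \ref{singclaim4} transfers verbatim: if \baby\ breaks restriction rule I on some $\rho\in 2^{\tilde n}$, the \catchingpoint\ $\hat\rho$ satisfies $1-||\Mb[t](\hat\rho)||_1\leq 0$ while $\m(A[t]|\hat\rho)<1$ (because $\rho 0^{\hat n}$ is still absent), so $\hat\rho$ receives \typeaattention; if \baby\ breaks restriction rule II, then $||\Mb[t](\rho)||_1 \geq 1+\delta/2>1$ so again $\rho$ receives \typeaattention; and if neither, the winning criterion of the scaled restricted game delivers, for $\Delta$ sufficiently small relative to $2^{-\tilde n}/a$, that $1-||\Mb[t](\emptyset)||_1\leq \tfrac{2}{a}(1-\m(A[t]))$ with $\m(A[t])<1$.

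The only thing to verify that is not purely cosmetic is that the partial function monotonicity $p_j[t]\supseteq p_j[t-1]$ is preserved when we interpret \baby's moves against $D_\rho$-enumerations as responses to a fictitious $\rho$-enumeration in the inner scaled game; but since these $p_j[t]$ are simply carried along unchanged and each commitment is made once and extended monotonically in the outer game, the inner simulation automatically inherits a valid partial-computably-sided play. Thus there is no real new obstacle, and the proof is a direct transcription of Claim \ref{singclaim4}.
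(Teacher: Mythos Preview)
Your proposal is correct and is exactly the approach the paper takes: the paper's entire proof of this claim is the single line ``By the same argument we used for Claim \ref{singclaim4}'', and you have faithfully replayed that argument, correctly noting (as in Remark \ref{singrem2}) that only the supermartingale and monotonicity properties of $M_j[t]$ are used. Your additional remark that the monotone extension of the partial functions $p_j[t]$ is trivially inherited by the inner simulation is a valid and harmless observation.
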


By Claim \ref{singclaim964},
to prove Lemma \ref{singques960},
it suffices to prove the following.
\begin{lemma}
\label{singlem943}

For every $a>0$,
there are  $n\in\omega, \delta>0$ such that
\alice\ has a winning strategy for the restricted
dynamic $(a,\delta,n, k )$-partial-\sided-game \ref{singdef964}
such that $\m(A)<1$.
\end{lemma}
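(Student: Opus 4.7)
The plan is to mirror the structure of the proof of Lemma \ref{singlem43} while isolating the extra difficulty posed by kastergales into a stand-alone auxiliary game. Recall that in the single-\sided\ setting \alice\ forced \baby\ to allocate a uniformly large \variancee\ on each doubleton $B_\rho=\{\rho 0,\rho 11\}$ whenever $\rho$ did not receive \typeaattention, and then contradicted the variance budget provided by Claim \ref{singclaim5}. The obstacle in the kastergale setting is that the \sided-direction at $\rho$ is revealed only partially and dynamically: \baby\ may refuse to declare $p_j(\rho)$, thereby imposing $M_j(\rho 0)=M_j(\rho 1)$ and making $M_j$ locally a martingale at $\rho$, so the simple lexicographic enumeration over $e(3^{\leq n/2})$ no longer suffices to produce local \variancee\ at a pre-chosen doubleton.

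My first step would be to introduce an auxiliary \variancee\ game \ref{singdef965} whose sole objective for \alice\ is to produce, with small measure cost, a prefix-free family of nodes on each of which \baby\ has been driven to allocate a prescribed amount of \variancee\ into the associated doubletons. Lemma \ref{singlem966} asserts that \alice\ has a winning strategy for this game. A reduction, Claim \ref{singclaim966}, would then combine such a strategy with the \typeaattention\ mechanism (Fact \ref{singfactwinatt}, which carries over verbatim to the kastergale setting since its proof uses only supermartingale monotonicity) and Claim \ref{singclaim5} applied jointly to the $k$ kastergales, to build a winning strategy for the restricted dynamic partial-\sided-game \ref{singdef964}. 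The argument is by contradiction, in the style of Case (b) of \S\ref{singsec0}: absent \typeaattention, the \variancee\ forced by Lemma \ref{singlem966} grows linearly in the number of levels on which it is applied, eventually exceeding the $C(k)(1+m\varepsilon)$ bound.

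The bulk of the work is Lemma \ref{singlem966}, which I would establish by induction on $k$. For $k=1$, a single kastergale either commits $p_0(\rho)$, at which point Claim \ref{singclaim83} forces \variancee\ on $B_\rho$ whenever $\rho$ does not receive \typeaattention, or else it keeps $p_0(\rho)$ undefined and thereby imposes $M_0(\rho 0)=M_0(\rho 1)$, pushing any potential \variancee\ at $\rho$ down to the next level; iterating this observation along a branch, \alice\ can extract the required \variancee\ by deferring her enumeration inside $[\rho]^\preceq$ until either a commitment occurs or the depth budget is exhausted. For the inductive step, given a winning strategy for $k-1$ kastergales, the extra coordinate $M_{k-1}$ is handled by branching on its commitment status: where $M_{k-1}$ remains uncommitted it is a local martingale whose contribution is absorbed into Claim \ref{singclaim5} applied to $\Mb=(M_0,\dots,M_{k-1})$; where $p_{k-1}$ is declared, the inductive $(k-1)$-strategy is nested inside a local $k=1$ \variancee-forcing routine on $M_{k-1}$.

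The main obstacle I anticipate is the measure accounting through the induction on $k$: each inductive layer consumes a positive fraction of the remaining measure and weakens the per-level \variancee\ quota, and one must keep $\m(A)<1$ uniformly in $k$. I would address this by choosing the induction parameters so that the \variancee\ quota at induction stage $k$ scales as a fixed geometric factor of the form $C(a)/2^k$ while the depth $n_k$ grows only multiplicatively in $k$, ensuring that the measure spent at each layer is summable and the residual measure stays bounded away from $1$. Once this accounting is in place, Lemma \ref{singlem966} combined with Claim \ref{singclaim966} yields Lemma \ref{singlem943}.
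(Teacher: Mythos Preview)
Your high-level architecture matches the paper: reduce Lemma \ref{singlem943} to the \variancee\ game via Claim \ref{singclaim966}, then prove Lemma \ref{singlem966} by induction on $k$. The details of both the base case and the inductive step, however, differ from the paper, and your inductive step has a real gap.

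For $k=1$, the paper's Strategy \ref{singstrategy0} is much simpler than what you sketch and never aims for a type-(b) win. \alice\ enumerates $([1]^\preceq\cap 2^m)\setminus\{1^m\}$ and waits: if $p_0[t](\emptyset)$ is ever declared to be $1-i$, she enumerates $i$ and wins type-(a) because $M_0[t](\emptyset)\geq M_0[t](i)\geq 1$; if $p_0[t](\emptyset)$ stays undefined, then $M_0[t](0)=M_0[t](1)=M_0[t](\emptyset)$, so $M_0[t](\emptyset)=M_0[t](1)\geq 1-2^{-(m-1)}$ while $\m(A[t])\leq 1/2$, again type-(a). Your plan to invoke Claim \ref{singclaim83} at a committed node is shaky: that claim needs $i$-\sided ness at both $\emptyset$ and $1$ (i.e., at $\rho$ and $\rho 1$), whereas a single kastergale commitment at $\rho$ fixes the side only at $\rho$.

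The more serious issue is the inductive step. The paper's mechanism is not branching on the commitment status of $p_{k-1}$; it is the observation that failure of type-(b) forces every $M_j$ to be nearly \emph{constant} on $A[t]$ (within $\hat\Delta$ of its value on the first enumerated string). This ``no cooperation'' phenomenon lets \alice\ defeat the coordinates one by one: in phase~1 she plays the winning strategy for the full $(c,m/2,k-1)$-partial-\sided-game \ref{singdef962} from Lemma \ref{singlem941} against $M_1,\dots,M_{k-1}$, forcing $\sum_{j>0}M_j[t](\emptyset)\geq c(1-c_0-\hat\Delta)$; in phase~2 she plays the same against $M_0$ alone over the untouched cones. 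Crucially, the induction is Lemma \ref{singlem941}$(k-1)\Rightarrow$ Lemma \ref{singlem966}$(k)$, cycling back through \emph{all} the reductions of \S\ref{singsecidr2}--\ref{singseccatching}, not a self-contained induction inside the \variancee\ game. This is also what resolves your measure-accounting worry: the $(k-1)$-strategy from Lemma \ref{singlem941} has cost $\leq\varepsilon$ for arbitrary $\varepsilon$, so the total cost is $\leq 2\varepsilon$ with no geometric loss in $k$. Your commitment-branching plan does not explain how to coordinate when $p_{k-1}$ is declared at some nodes and undeclared at others, and ``absorbed into Claim \ref{singclaim5}'' is a variance \emph{upper} bound, not a device that produces a winning state for \alice.
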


In \S  \ref{singseccatching},
we reduce the restricted dynamic partial-\sided-game
to yet another game.

\subsection{Reducing to \variancee\ game}
\label{singseccatching}
As   commented below Lemma \ref{singlem941},
it suffices to show that \alice\ can force
\baby\ to allocate  large \variancee\ on
the set of strings she enumerates.
This gives rise to the following \variancee\ game.

\begin{definition}[Variance\ $(a,\Delta, m,k )$-partial-\sided-game]
\footnote{The variance game does not have restriction rule II for \baby.
Indeed, \alice\ can still win it without that rule (see \S  \ref{singsec100}).}
 \label{singdef965}

At each round $t\in\omega$, \alice\ firstly enumerates a $\sigma\in 2^m$
(that has not been enumerated before);
then \baby\ presents $p_j[t]$, $p_j[t]$-\sided\ supermartingale
$M_j[t]$ such that the following hold:
\begin{itemize}
\item $p_j[t]$ extends $p_j[t-1]$;
\item $||\Mb[t](\sigma)||_1\geq 1 $;
\item $M_j[t]$ dominates $ M_j[t-1]$ on $2^{\leq n}$.

\end{itemize}
Let $A[t]$ be the set of $\sigma$ \alice\ enumerated by round $t$.
\alice\ wins  if for some $t$,
\begin{itemize}
\item (type-(a)) either $1-||\Mb[t](\emptyset)||_1
\leq \frac{1}{a}(1-\m(A[t]))$;

\item (type-(b)) or $\variance( \Mb[t]|A[t])\geq \Delta$.

\end{itemize}
\end{definition}

Later, we will show that \alice\ does have a winning strategy
for the \variancee\   game \ref{singdef965}
such that $\m(A)<1$ (Lemma \ref{singlem966}).
Now we reduces Lemma \ref{singclaim964} to Lemma \ref{singlem966}.
Let $a,\Delta>0, m\in\omega$.
\begin{claim}\label{singclaim966}
Suppose \alice\ has a winning strategy for the
\variancee\ $(a,\Delta, m, k )$-partial-\sided-game \ref{singdef965}
such that $\m(A)<1$.
Then there are
$n\in\omega,\h\delta>0$ such that \alice\ has a winning strategy for
 restricted dynamic
$(a,\h\delta, n, k)$-partial-\sided-game \ref{singdef964} such that $\m(A)<1$.

\end{claim}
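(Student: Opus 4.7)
I plan to prove Claim \ref{singclaim966} by iterating the variance-game strategy across many nested generations of sub-trees, and then using the variance decomposition of Claim \ref{singclaim5} as an upper bound that contradicts the accumulated variance. Fix a large integer $\ell$ and a small $\h\delta>0$, both to be chosen as a function of $a,\Delta,k,m$ at the end. Set $n=\ell m$. In the restricted dynamic $(a,\h\delta,n,k)$-partial-sided-game, Alice simultaneously simulates, for every $i<\ell$ and every $\rho\in 2^{im}$, an appropriately shifted copy of her winning variance-game strategy on the subtree $[\rho]^\preceq\cap 2^{(i+1)m}$. Whenever a simulation calls for enumerating $\sigma$, Alice enumerates $\sigma$ in the ambient game. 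Restriction rule I of the restricted dynamic game is identical to the requirement $\|\Mb[t](\sigma)\|_1\geq 1$ in the variance game, and restriction rule II is strictly stronger than anything the variance game demands, so every Baby response is legal in every active simulation.

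Each simulation on subtree $[\rho]^\preceq$ terminates with either the shifted type-(a) win $1-\|\Mb(\rho)\|_1\leq \tfrac{1}{a}(1-\m(A_\rho|\rho))$ or the shifted type-(b) win $\variance(\Mb\mid A_\rho)\geq \Delta$, where $A_\rho\subseteq [\rho]^\preceq\cap 2^{(i+1)m}$ is Alice's enumeration under that simulation. The first condition is precisely that $\rho$ receives winning attention in the restricted dynamic game, so as soon as it occurs for some $\rho$ in some generation, Alice enumerates $2^n\setminus[\rho]^\preceq$ and wins by the same calculation as in Fact \ref{singfactwinatt}. It remains to rule out the opposite case, where type-(b) holds at every $\rho\in\bigcup_{i<\ell}2^{im}$. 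I would then apply Claim \ref{singclaim5}, which holds for partial-sided supermartingales by Remark \ref{singrem1}, to the nested family $\t B_i:=2^{im}$: restriction rule II gives $M_j(\rho)\leq 1+\h\delta\leq 2$ for all $\rho$, and combining restriction rule I with the supermartingale property yields $\int_{\sigma\in 2^n}\|\Mb(\sigma)\|_1\geq 1-O(\h\delta)$, so the hypotheses hold with $\varepsilon=O(\h\delta)$. Under the type-(b)-everywhere assumption, the integrated variance at generation $i$, namely $\int_{\rho\in 2^{im}}\variance(\Mb\mid \t B_\rho)$, is at least $\Delta/2$ after accounting for the small gap $[\rho]^\preceq\setminus A_\rho$ via the within/between-groups decomposition of variance. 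Summing over $\ell$ generations produces total variance at least $\ell\Delta/2$, while Claim \ref{singclaim5} bounds the same sum by $C(k)(1+\ell\h\delta)$. Choosing $\h\delta<\Delta/(4C(k))$ and $\ell>4C(k)/\Delta$ yields the desired contradiction.

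The main obstacle is the alignment between each $A_\rho$ and the full prefix-free refinement $\t B_\rho=[\rho]^\preceq\cap 2^{(i+1)m}$ required by Claim \ref{singclaim5}. The variance game only guarantees $\m(A_\rho|\rho)<1$, so $\t B_\rho\setminus A_\rho$ is a nontrivial ``missing'' portion, and one must show it cannot secretly absorb essentially all of the variance and thereby drop $\int_{\rho\in 2^{im}}\variance(\Mb\mid\t B_\rho)$ below $\Delta/2$. This is exactly where restriction rule II earns its keep: the uniform bound $\|\Mb\|_1\leq 1+\h\delta$ prevents Baby from hiding arbitrarily large values in the un-enumerated complement, so the variance on the whole $\t B_\rho$ remains a constant fraction of the variance on $A_\rho$. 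Making this last step quantitatively tight, including choosing the implicit constants and coordinating the $O(\h\delta)$ slack with the target $\Delta/2$, is the delicate bookkeeping at the heart of the argument.
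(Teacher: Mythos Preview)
Your overall architecture—iterate the variance game across $\ell$ generations and contradict Claim \ref{singclaim5}—is the paper's, but there is a real gap in how the levels are coupled. You say Alice ``simultaneously simulates'' a variance game at every $\rho\in 2^{im}$ for all $i<\ell$, and that ``restriction rule I \ldots\ is identical to the requirement $\|\Mb[t](\sigma)\|_1\geq 1$ in the variance game.'' But rule I only forces $\|\Mb[t](\sigma)\|_1\geq 1$ for \emph{leaf} strings $\sigma\in 2^{n}$ that Alice actually enumerates. For a simulation at an intermediate level $i<\ell-1$, the variance game needs $\|\Mb[t](\sigma)\|_1\geq 1$ at the intermediate node $\sigma\in 2^{(i+1)m}$ being ``enumerated,'' and nothing in the ambient game delivers this directly; so Baby's response is not, as you claim, automatically a legal response in every active simulation. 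The paper fixes this by a \emph{nested} (not simultaneous) recursion: when the level-$i$ simulation calls for $\sigma\in 2^{(i+1)m}$, Alice first runs the full $(\ell-i-1)$-deep substrategy on $[\sigma]^\preceq$, and—this is the easily-missed point—after a substrategy reaches its variance-game winning criterion it \emph{continues enumerating the remainder of its subtree}. Only once all of $[\sigma]^\preceq\cap 2^n$ has been enumerated does the supermartingale inequality force $\|\Mb(\sigma)\|_1\geq 1$, at which moment Baby's state becomes a valid move for the level-$i$ game.

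This same full-subtree enumeration is also what makes the type-(b) variance persist to a single common round, which you need since Claim \ref{singclaim5} is applied at one time $t$ while each $\variance(\Mb[t_\rho]\mid A_\rho)\geq\Delta$ is achieved at its own $t_\rho$. Persistence requires each $\sigma\in A_\rho\subseteq 2^{(i+1)m}$ to be pinned in $[1,1+\h\delta]$; rule II gives the upper bound, but the lower bound again comes from $[\sigma]^\preceq\cap 2^n\subseteq A$, not from rule I alone. Two smaller slips: since $\m(A_\rho\mid\rho)$ can be as small as $2^{-m}$, the per-level integrated variance is only $\gtrsim 2^{-O(m)}\Delta$ (the paper gets $2^{-3m}\Delta$), not $\Delta/2$; and $\int_{2^n}\|\Mb\|_1\geq 1-\varepsilon$ does not follow from rule I with $\varepsilon=O(\h\delta)$ but from choosing the round $t$ with $\m(A[t])\geq 1-\varepsilon$ (the paper takes $\varepsilon=2^{-n}$).
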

\begin{proof}

For each $l\in \omega$, we inductively define a
$l$-level-\variancee-strategy. For each $l$, it is
a strategy played on $2^{ml}$.
We show that for sufficiently large $l$ (depending on $a,\Delta,m,k$),
the strategy wins the restricted dynamic $(a,\h\delta, ml, k)$-partial-\sided\ game \ref{singdef964}
(where $\h\delta$ is sufficiently small depending on $m,\Delta$ and to be specified).

\begin{strategyn*}[$1$-level-\variancee-strategy]
This is simply the winning strategy for the
\variancee\ $(a,\Delta,m, k)$-partial-\sided-game \ref{singdef965}
except that when some $\rho$ receives \typeaattention\
(recall from (\ref{singeq11}) the definition of
\typeaattention),
\alice\ then enumerates $2^m\setminus [\rho]^\preceq$ and terminates.
Moreover, after reaching
the winning criterion of the  \variancee\ $(a,\Delta,m, k)$-partial-\sided-game
\ref{singdef965}\footnote{
Since they are actually playing the restricted dynamic game \ref{singdef964},
 now \baby\ even has to conform
to stronger restrictions
(i.e., restriction rule II). So the \variancee\ game winning
strategy still leads to a winning state (of the \variancee\ game).},
\alice\ keep enumerating whatever is left in $2^m$. (End of Strategy.)
\end{strategyn*}

The round $t$ \alice\ wins the
\variancee\ $(a,\Delta,m,k)$-partial-\sided-game \ref{singdef965}
is the round $\emptyset$ receives \emph{\potentialwin\ attention} (compare to (\ref{singeq14})).
 If no $\rho$ (in particular $\emptyset$ does not) receives \typeaattention,
it means \alice\ win the \variancee\ $(a,\Delta,m,k)$-partial-\sided-game
\ref{singdef965} in type-(b) way, namely
\begin{align}\label{singeq978}
\variance(\Mb[t]|A[t])\geq \Delta
\end{align}
(otherwise $\emptyset$ receives \typeaattention\ since by the hypothesis of Claim \ref{singclaim966}, $\m(A[t])<1$).
Since $\h\delta$ is sufficiently small (depending on $  m, \Delta $)
and $M_j[t]$ is nondecreasing,
we have
\begin{align}\label{singeq971}
&\variance(\Mb[\h t]|A[  t])\geq \Delta/2\text{ for all $\h t\geq t$;
 thus }\\  \nonumber
 &\variance(\Mb[\h t]|2^m)\geq 2^{-2m}\Delta\text{ for all $\h t\geq t$}.
\end{align}
We emphasize that these \variancee\ strategy are not
about winning the \variancee\ game \ref{singdef965},
but the restricted dynamic
$(a,\h \delta,m,k)$-partial-\sided-game \ref{singdef964}.
The \variancee\ strategy, if not interrupted by \typeaattention, will enumerate
all strings in the end.

\begin{strategyn*}[$l$-level-\variancee-strategy]
On $2^m$, \alice\ plays the $1$-level-\variancee-strategy.
But when the strategy tells her to enumerate $\rho$,
instead of enumerating $\rho$ directly,
\alice\ plays the $(l-1)$-level-\variancee-strategy
on $[\rho]^\preceq\cap 2^{ml}$.
Again, during this process (main game or sub-game),
if some $\rho\in 2^{\leq ml}$ receives \typeaattention,
\alice\ then enumerates $2^{ml}\setminus [\rho]^\preceq$
and terminates.
(End of strategy.)
\end{strategyn*}
We show that when $l$ is sufficiently large,
the $l$-level-\variancee-strategy
wins the restricted dynamic $(a,\h\delta,ml,k)$-partial-\sided-game \ref{singdef964}.
\vspace{0.3cm}

\noindent\textbf{Case (a):} Some $\rho$ receives \typeaattention.

This is the same as case (a) of the proof of Lemma \ref{singlem43} (see Fact \ref{singfactwinatt}).\vspace{0.3cm}

\noindent\textbf{Case (b):} Otherwise.

We derive a contradiction.
Note that if a sub-strategy over $\rho$ calls for \typeaattention,
then so does the main game
(since \typeaattention\ of $\rho$
 only depends on what happened in $[\rho]^\preceq$).
 Thus if some sub-strategy calls for \typeaattention,
the whole strategy terminates and \alice\ wins the restricted dynamic $(a,\h\delta,ml, k)$-partial-\sided-game \ref{singdef964}
(as we argued in Fact \ref{singfactwinatt}).
In other words, in this case, for every $\h l<l$, every $\rho\in 2^{m\h l}$,
\begin{align}\label{singeq973}
&
\text{the (sub) \variancee\ $(a,\Delta,m,k)$-partial-\sided-game \ref{singdef965}}\\ \nonumber
&\text{on $[\rho]^\preceq\cap 2^{|\rho|+m}$
is won in
a type-(b) way (i.e., (\ref{singeq978})).}
\end{align}

Also note that in a $l$-level-\variancee-strategy, when
\alice\ wins the main variance game (the one on $2^m$), it means
  $ [\t A[t]]^\preceq\cap 2^{ml}$ has been enumerated (where $\t A[t]\subseteq 2^m$ is the
  set of strings
   \alice\ consumed to win the \variancee\ game on $2^m$).
So
   (as (\ref{singeq971}) by restriction rule)
   \begin{align}\label{singeq972}
   &\text{once $\Mb[t]$ allocate a constant large variance on
   $\t A[t]$, }\\ \nonumber
   &\text{the variance won't   change too much ever since
   (i.e., (\ref{singeq971})).}
   \end{align}

 By the near end of the game
(when $\m(A[t])$ is close to $1$),
\begin{align}\label{singeq977}
&\text{ for sufficiently many $\rho\in \cup_{\h l<l}2^{m\h l} $
(say every $\rho\in \cup_{\h l <l/2}2^{m\h l}$),
}\\ \nonumber
 &\text{ \alice\ has won the \variancee\ game \ref{singdef965} on $[\rho]^\preceq\cap 2^{|\rho|+m}$.}
\end{align}

Combining (\ref{singeq971}), (\ref{singeq973}), (\ref{singeq972}), (\ref{singeq977})
(see case (b) of Lemma \ref{singlem43}) and
 letting $\t B_{\h l} = 2^{m\h l}$,
if for each $\rho\in \t B_{\h l-1}$ we let
$\t B_\rho = [\rho]^\preceq\cap \t B_{\h l}$,
there exists a round $t$ (near end of the game) such that:
\begin{align}\nonumber
\sum_{\h l<l}\int_{\rho\in \t B_{\h l}}\variance(\Mb[t]|\t B_\rho)
&\overset{}{\geq} 2^{-3m}\Delta\cdot l\text{ while }\\ \nonumber
\int_{\sigma\in 2^{ml}}||\Mb[t](\sigma)||_1&\geq 1-\varepsilon.
\end{align}
where  $\varepsilon$ is sufficiently small (say $\varepsilon=2^{-ml}$).
Since $\emptyset$ does not receive \typeaattention,
\begin{align}\nonumber
||\Mb[t](\emptyset)||_1\leq 1.
\end{align}
Therefore, by Claim \ref{singclaim5},
$$
\sum_{\h l<l}\int_{\rho\in \t B_{\h l}}\variance(\Mb[t]|\t B_\rho)
\leq C(k)(1+\varepsilon l)
$$
Since we can choose $\varepsilon= 2^{-ml} = o(2^{-3m}\Delta/C(k))$
and $l$ is sufficiently large, this is a
contradiction.
\end{proof}

\begin{remark}\label{singrem3}
Again, Claim \ref{singclaim966}
does not rely on \sided ness
but the supermartingale and nondecreasing of $M_j[t]$.
\end{remark}

By Claim \ref{singclaim966},
it remains to prove
\begin{lemma}\label{singlem966}
For every $a>0$,
there are $m\in\omega$, $\Delta>0$ such that
\alice\ has a winning strategy for
\variancee\ $(a,\Delta, m, k )$-partial-\sided-game \ref{singdef965} such that $\m(A)<1$.
\end{lemma}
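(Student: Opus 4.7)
The plan is to prove Lemma \ref{singlem966} by induction on $k$, with the base case $k=1$ being the only place where partial-\sided ness plays an essential role and the inductive step relying solely on the supermartingale and monotonicity-in-$t$ properties of the $M_j[t]$ (in line with Remarks \ref{singrem1}, \ref{singrem2}, \ref{singrem3}).

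\textbf{Base case $k=1$.} Here \alice\ faces a single partially-computably-sided supermartingale $M_0$ with approximating partial function $p_0$. I would adapt the lexicographic enumeration strategy from the proof of Lemma \ref{singlem43}, using the embedding $e:3^{\leq m/2}\to 2^{\leq m}$ together with the triples $B_\rho=\{\rho 11,\rho 0\}$ and $\tau_\rho=\rho 10$. The crucial step is a partial-\sided\ analogue of Claim \ref{singclaim83}: for $\rho\in e(3^{<m/2})$ that has received \potentialwin\ attention at round $t$ but not \typeaattention, one shows
\[
1-||\Mb[t](\rho)||_1\ \leq\ C\sqrt{\variance(\Mb[t]|B_\rho)}
\]
for an absolute constant $C$, regardless of whether $p_0[t]$ is defined at $\rho$, $\rho\ast 1$, or both. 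When $p_0[t](\rho)$ is defined, Claim \ref{singclaim83} applies directly. When $p_0[t](\rho)$ is undefined, the balance requirement forces $M_0[t](\rho\ast 0)=M_0[t](\rho\ast 1)$, so $M_0[t](\rho)\geq M_0[t](\rho\ast 0)$ by the supermartingale inequality, and a short case analysis on the status of $p_0[t](\rho\ast 1)$ yields the desired bound (the unconstrained sub-case is in fact strictly more favourable, since $M_0[t]$ is then forced to behave like a martingale on $B_\rho$). Once this analogue is in hand, the variance-accumulation argument of Lemma \ref{singlem43} (combined with Claim \ref{singclaim5}) immediately wins the \variancee\ $(a,\Delta,m,1)$-partial-\sided-game for sufficiently small $\Delta$ and sufficiently large $m$ depending on $a$.

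\textbf{Inductive step.} Assume the lemma for $k-1$ with parameters $(m',\Delta')$ depending on $a'$. To handle $k$ supermartingales, I would play a nested strategy that, at the outermost level, attempts to force variance on some distinguished coordinate $M_{k-1}$ by a Claim \ref{singclaim966}-style recursive nesting with level depth chosen large relative to the variance budget $C(k)$ of Claim \ref{singclaim5}. Either this nesting succeeds in forcing $\variance(\Mb[t]|A[t])\geq \Delta$ directly (winning type-(b)), or at some intermediate level the variance allotted to $M_{k-1}$ is uniformly small on the enumerated portion, in which case $M_{k-1}$ is approximately constant on a substantial sub-tree, its contribution can be absorbed into an adjusted threshold, and the game on this sub-tree reduces to a $(k-1)$-supermartingale \variancee\ game to which the induction hypothesis applies. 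Since neither step uses \sided ness, Remarks \ref{singrem1}, \ref{singrem2}, \ref{singrem3} apply and the supermartingale/monotonicity structure is all that is needed.

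\textbf{Main obstacle.} The technical heart of the argument is the partial-\sided\ analogue of Claim \ref{singclaim83} in the base case. One must argue uniformly over the possible approximation histories of $p_0$, because Baby chooses when (and whether) to commit $p_0$ at each node, and this decision is irreversible. Ensuring that the variance lower bound on $B_\rho$ is bounded below by a constant depending only on $a$---uniformly across all \potentialwin-attention rounds---is subtle precisely because Baby's strategy may leave $p_0$ undefined at inconvenient places; verifying that the martingale-like behaviour forced by an undefined $p_0$ is never worse for \alice\ than the committed case is the real work. The inductive step, by contrast, is conceptually straightforward once the reductions in Section \ref{singsecidr2} and the variance bookkeeping of Claim \ref{singclaim5} are combined with the induction hypothesis.
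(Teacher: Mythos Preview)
Your base case is wrong: the partial-\sided\ analogue of Claim~\ref{singclaim83} that you rely on is simply false. Take $k=1$ and a node $\rho$ with $p_0[t](\rho)=0$ and $p_0[t](\rho 1)=1$. Baby may set $M_0[t](\rho 0)=M_0[t](\rho 11)=1$, $M_0[t](\rho 10)=0$, $M_0[t](\rho 1)=1/2$, $M_0[t](\rho)=3/4$; this is a valid $p_0[t]$-\sided\ supermartingale. Then $\variance(M_0[t]\mid B_\rho)=0$ while $1-M_0[t](\rho)=1/4$, so no inequality of the form $1-M_0[t](\rho)\leq C\sqrt{\variance(M_0[t]\mid B_\rho)}$ can hold. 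Moreover this configuration arises naturally in the lexicographic enumeration: when you enumerate $B_\rho=\{\rho 0,\rho 11\}$ in that order, Baby can first commit $p_0(\rho)=0$ to cover $\rho 0$, then commit $p_0(\rho 1)=1$ to cover $\rho 11$, paying zero variance on $B_\rho$ while keeping $M_0[t](\rho)$ well below~$1$. Your case analysis only treats the situation where $p_0[t](\rho)$ is undefined; the committed case is the dangerous one, and there Baby can steer the side at $\rho 1$ adversarially.

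The paper's base case avoids this entirely with a much simpler idea that works only at the root and directly exploits the irrevocability of $p_0$: enumerate $([1]^\preceq\cap 2^m)\setminus\{1^m\}$ one string at a time. If $p_0(\emptyset)$ ever becomes defined, immediately enumerate the single string on the \emph{disfavoured} side, forcing $M_0[t](\emptyset)\geq 1$ with $\m(A[t])<1$ (type-(a) win). If $p_0(\emptyset)$ stays undefined throughout, then $M_0[t](\emptyset)\geq M_0[t](1)\geq 1-2^{-(m-1)}$ while $\m(A[t])\leq 1/2$, again a type-(a) win. No variance is ever needed for $k=1$.

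Your inductive step is also off-target. The paper does not nest the $(k-1)$-\emph{variance} game; it invokes the full Lemma~\ref{singlem941} for $k-1$ (which is available by the chain of reductions once the variance lemma holds for $k-1$). The strategy is two-phase: on $2^{m/2}$ play the $(c,m/2,k-1)$-game winning strategy against $M_1,\dots,M_{k-1}$; then on each remaining $\rho\in 2^{m/2}\setminus\tilde A$ play the $(c,m/2,1)$-game against $M_0$. If type-(b) never triggers, each $M_j$ is nearly constant on the enumerated strings, so the scaled thresholds in both phases are met and $\sum_j M_j[t](\emptyset)\geq c-2\hat\Delta$ with $\m(A)\leq 2\varepsilon$, giving a type-(a) win.
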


\subsection{Winning the \variancee\ game}

\label{singsec100}
The difficulty to win the partial-\sided-game \ref{singdef962} is that
$M_j$ cooperate with each other. But in the \variancee\ game \ref{singdef965}
they
have to behave like  constant (on $A[t]$)
functions
 (otherwise \alice\ wins in the type-(b) way
 \footnote{We emphasize
that   $\Delta$ can be arbitrarily small
 depending on $a,m$.}),
 so there is no cooperation among
$M_j$. Therefore we could simply defeat them one by one.

We prove Lemma \ref{singlem966} by induction on $k$.
For $k=1$.
Let $a,\Delta>0$; let $m\in\omega$  be sufficiently large
(depending on $a $).

\begin{strategy}[$k=1$]\label{singstrategy0}
Enumerates each $\sigma\in ([1]^\preceq\cap 2^m)\setminus \{1\cdots 1\}$
 until
\begin{align}\label{singeq91}
&\text{$p_0[t](\emptyset)$ is defined; or }\\ \label{singeq90}
&\text{every $\sigma\in  ([1]^\preceq\cap 2^m)\setminus \{1\cdots 1\}$ is enumerated
(while $p_0[t]\uparrow$).}
\end{align}
If  (\ref{singeq91}) happens, \alice\ then enumerates $i$
where $p_0[t](\emptyset) =1-i$.

\noindent(End of Strategy.)
\end{strategy}
If (\ref{singeq91}) occurs,
 clearly \alice\ wins the
game since now $M_0[t](\emptyset)\geq 1$ while $\m(A[t])<1$
(since either $1\cdots 1\notin A[t]$ or $A[t]\cap [0]^\preceq=\emptyset$).
In case (\ref{singeq90}),  \alice\  wins
since
\begin{align}\nonumber
1-M_0[t](\emptyset)&\overset{p_0[t](\emptyset)\uparrow}{\leq}
1-M_0[t](1)\leq  2^{-(m-1)}\text{ while }\\ \nonumber
1-\m(A[t])&\geq 1/2.
\end{align}

Note that now we do not only have that Lemma \ref{singlem966} is true
 when $k=1$, but by what we have done in \S  \ref{singsecidr2} -
\ref{singseccatching}, we have
\begin{align}\nonumber
\bullet\ \ \text{Lemma \ref{singlem941} is true when $k=1$.}
\end{align}
In other words, by induction,
it suffices to prove that
\begin{align}\nonumber
\bullet\ \ \text{Lemma \ref{singlem941} (for $k=k-1)$
}\Rightarrow\text{ Lemma \ref{singlem966} (for $k=k$).}
\end{align}

For $k=k$; given $ a>0$.
Let $\varepsilon>0, c<1$ and $m\in\omega$
be such that $\varepsilon\approx 0,c\approx 1$ and
 \alice\ has a winning strategy
 for the   $( c, m/2,k-1)$-partial-\sided-game \ref{singdef962}
 such that $\m(A)\leq \varepsilon$. Let
  $ \Delta>0$ be sufficiently small (depending on $a,k$)
so that for some $\h \Delta>0 $:
 $2^{-m}(\h \Delta/2k)^2\geq \Delta$ and
 $1-c+2\h\Delta\leq (1-2\varepsilon)/a$.
The following strategy wins the \variancee\ $(a,\Delta,m,k)$-partial-\sided-game
\ref{singdef965} (for $k=k$).

\begin{strategyn*}[$k=k$]
Until reaching  type-(b) winning criterion, \alice\ do the following.
\begin{enumerate}[(1)]
\item In phase $1$: \alice\ plays the winning strategy
for the (scaled\footnote{\alice\ will play the winning strategy assuming type-(b) winning criterion
won't happen. If that is the case,
then let $c_0 = M_0[0](\sigma_0)$ where $\sigma_0\in 2^{m}$ is the first string
\alice\ enumerates, we have
$\sum_{0<j<k}M_j[t](\sigma)\geq 1-c_0-\h\Delta$
for each $\sigma\in 2^m$ \alice\ enumerates (since
$2^{-m}(\h \Delta/2k)^2\geq \Delta$). So she will play
a $(1-c_0-\h\Delta)$-scaled $( c,m/2,k-1)$-partial-\sided-game \ref{singdef962}.
It will be seen from the proof later that it doesn't matter
if $1-c_0-\h\Delta<0$.}) $( c,m/2,k-1)$-partial-\sided-game \ref{singdef962}
against $M_1,\cdots,M_{k-1}$ on $2^{m/2}$.
Let $\t A$ denote the set of strings in $2^{m/2}$ \alice\ has enumerated
when she wins (the $( c,m/2,k-1)$-partial-\sided-game \ref{singdef962}
against $M_1,\cdots,M_{k-1}$ on $2^{m/2}$).
\item In phase $2$, for each $\rho\in 2^{m/2}\setminus \t A$ (in whatever order),
play the winning strategy for the (scaled) $( c,m/2,k-1)$-partial-\sided-game \ref{singdef962}
against $M_0 $ on $[\rho]^\preceq\cap 2^m$.
\end{enumerate}
\noindent (End of Strategy.)
\end{strategyn*}

Assuming type-(b) winning criterion is not reached (otherwise we are done),
we prove that type-(a) winning criterion will be reached.
When \alice\ wins the $( c,m/2,k-1)$-partial-\sided-game \ref{singdef962}
against $M_1,\cdots,M_{k-1}$ on $2^{m/2}$,
by scaling,
$$\sum\nolimits_{0<j<k}M_j[t](\emptyset)\geq c(1-c_0-\h\Delta).$$
Here we are clearly using the crucial fact that $\sum_{0<j<k}M_j[t](\rho)\geq  1-c_0-\h \Delta$
for all $\rho\in 2^{m/2}$  enumerated by \alice\ in phase $1$ (since type-(b) winning criterion does not occur).
This is the advantage (for \alice) in the \variancee\ partial-\sided-game \ref{singdef965},
 which the original
partial-\sided-game \ref{singdef962} does not have (so this induction argument cannot prove Lemma \ref{singlem941}).
Let $t$ be the round everything is done.
Obviously,
for each $\rho\in \t A$, $M_0[t](\rho)\geq c_0-\h\Delta$ (since type-(b) winning criterion does not occur).
Meanwhile, for each $\rho\in 2^{m/2}\setminus \t A$, by scaling,
$M_0[t](\rho)\geq c(c_0-\h\Delta)$.
Thus $$M_0[t](\emptyset)\geq c(c_0-\h\Delta).$$
Therefore,
$\sum_j M_j[t](\emptyset)\geq c-2\h\Delta$
and the cost of \alice\ satisfies $\m(A)\leq 2\varepsilon$.
Thus we are done.

\begin{remark}\label{singrem-1}
The induction step, does not rely on partial-\sided ness
but only the nondecreasing and supermartingale properties of $M_j[t]$.
In combination with remarks \ref{singrem1}, \ref{singrem2}, \ref{singrem3},
strategy \ref{singstrategy0} is the only part of
the proof of Lemma \ref{singlem941} concerning partial-\sided ness.
\end{remark}

\section{Subclasses of left-c.e. supermartingale}
\label{singsecgen}

We address the question: is there a proper subclass
of left-c.e. supermartingale defining 1-randomness.
First, we need to formalize the question.
A   \emph{ \supermartingaleapp }
 is a sequence $M[\leqslant t] = (M[0],\cdots,M[t])$
 of supermartingales.
We use $\mcal{M}$ to denote a set of \supermartingaleapp s.
An \emph{$\mcal{M}$-gale} is an infinite sequence $(M[t]:t\in\omega)$
such that $M[\leqslant t]\in \mcal{M}$ for each $t\in\omega$
and   $\lim_{t\rightarrow\infty} M[t](\sigma)$ exist for all $\sigma\in 2^{<\omega}$;
it is \emph{computable} iff $(M[t]:t\in\omega)$ is computable.

 \begin{example}
 For muchgale, let $\mcal{M}_l$ be the class of nondecreasing
 \supermartingaleapp s\
 $M[\leqslant t]$ such that
 for some $i<l$, $M[\h t]$ is $(l,i)$-\betting\ for all $\h t\leq t$.
 Let $\mcal{M} = \cup_l \mcal{M}_l$, then muchgale is computable $\mcal{M}$-gale.
 For kastergale,
 let $\mcal{M}$ be the class of nondecreasing supermartingale approximations
 $M[\leqslant t]$ such that
 for every $\sigma\in 2^{<\omega}$,
 if there is a $\h t\leq t$ such that $M[\h t](\sigma0)>M[\h t](\sigma 1)$,
 then for every $\h t\leq t$, $M[\h t](\sigma0)\geq M[\h t](\sigma 1)$.
 Then kastergale is computable $\mcal{M}$-gale.
\end{example}

Game \ref{singdef2} can be defined with respect to
a class $\mcal{M}$ of \supermartingaleapp s.
\begin{definition}[$(c,n,  k)$-$\mcal{M}$-game]
\label{singdefgen0}
At each round $t\in\omega$, \alice\ firstly enumerates a $\sigma\in   2^n$;
then \baby\ presents $M_j[t] $ with  $ M_j[\leqslant t] \in\mcal{M}$ for each $j<k$ such that:
\begin{itemize}
\item for every $\sigma\in A[t]$,
 $|| \Mb[t](\h\sigma)||_1\geq 1 $ for some $\h\sigma\preceq\sigma$;
\end{itemize}
\alice\ wins the game if for some $t$, $||\Mb[t](\emptyset)||_1\geq  c$.
\end{definition}

\def\shrinkinginvariant{subsequence-closed}
 We say $\mcal{M}$ is \emph{nondecreasing} if for each $ M[\leqslant t] \in\mcal{M}$,
$M[\h t]$ dominates $M[\h t-1]$ for all $1\leq \h t\leq t$.
We say $\mcal{M}$ is \emph{\homogeneous} iff:
for every $\rho\in 2^{<\omega}$, let $h_\rho$ denote the isomorphism
$\sigma\mapsto \rho\sigma$, we have
$ M[\leqslant t]\in\mcal{M}$ implies $ M[\leqslant t]\circ h_\rho\in\mcal{M}$.
We say $\mcal{M}$ is \emph{\shrinkinginvariant} iff
for every $M[\leqslant   t]\in\mcal{M}$,
every $ t_0<\cdots<t_{s-1}\leq t$,
$(M[t_0],\cdots,M[t_{s-1}])\in\mcal{M}$
\footnote{\label{singfootnote0} The \shrinkinginvariant\ together with
$\Pi_1^0$-class of $\mcal{M}$ ensures that the winning strategy of \alice\ is computable,
which is needed when applying the winning strategy in,
say, the proof of Theorem \ref{singth3}.
To this end,
it suffices that \alice\  computably knows all the possible action of \baby\ at round $t$
given the game history.
Note that $\Pi_1^0$-class of $\mcal{M}$
is not enough.
Because when \alice\ plays against computable $\mcal{M}$-gales
and  enumerates a string  (at time $t$),
the valid action of \baby\ may come very late, say at time $\h t$
where $\h t$ is much larger than $t$.
For example, consider the class $\{M^*\}$ where $M^*$ is the universal
left-c.e. supermartingale. For this class,
\alice\ has a winning strategy for the $(c,n,1)$-$\mcal{M}$-game
played at $[\rho]^\preceq\cap 2^{\leq n+|\rho|}$
by enumerating one string in $[\rho]^\preceq\cap 2^{n+|\rho|}$, provided $M^*(\rho)<c$. But the winning  strategy is not computable
(uniformly in $\rho$) as
the class is not \shrinkinginvariant.
}.
We say $\mcal{M}$ is \emph{scale-closed} iff for every $M[\leqslant t]\in\mcal{M}$,
every $c>0$, $cM[\leqslant t]\in\mcal{M}$.
Abusing terminology, we say $\mcal{M}$ is \emph{$\Pi_1^0$-class} iff for every $t\in\omega$,
the set of $M[\leqslant t]\in\mcal{M}$ is a $\Pi_1^0$-class uniformly in $t$.

\begin{remark}\label{singremhomo}
The \stronghomogeneity\ property is implicitly used in all  ingredients
(as all those finite games are defined on $[\emptyset]^\preceq\cap 2^{\leq n}$ instead of
$[\rho]^\preceq\cap 2^{\leq |\rho|+n}$).
However, this property is not essential, but for notation convenience
since those finite games can be defined on $[\rho]^\preceq\cap 2^{\leq |\rho|+n}$
and all ingredients (nesting, dynamic goal, restriction and reduction to variance game)
can be generalized accordingly.
\end{remark}

Let classes of \supermartingaleapp\ $(\mcal{M}_l:l\in\omega)$ be $\Pi_1^0$-class (uniformly in $l$),
\shrinkinginvariant,
  scale-closed, nondecreasing and homogeneous with $\mcal{M}_l\subseteq \mcal{M}_{l+1}$ for all $l\in\omega$;
and $\mcal{M} = \cup_l \mcal{M}_l$.
As  the proof of Theorem \ref{singth3} using Lemma \ref{singlem43},
\begin{claim}\label{singeqgen1}
Suppose for every  $l\in\omega$,
every $ c<1, \varepsilon>0$ and $k\in\omega$,
 there is an $n\in\omega$, such that
 \alice\ has a winning strategy
for   $(c,n,  k)$-$\mcal{M}_l$-game \ref{singdefgen0} such that
$\m(A)\leq \varepsilon$.
Then there is a non-1-random real $x\in 2^\omega$
on which no computable $\mcal{M}$-gale succeeds.
\end{claim}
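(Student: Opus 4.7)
The plan is to mimic the proof of Theorem \ref{singth3} (via Lemma \ref{singlem41}), with two new ingredients: a mild hierarchical bookkeeping to accommodate the stratification $\mcal{M} = \cup_l \mcal{M}_l$, and a verification that the winning strategies can be played \emph{effectively} when Baby's moves are supplied by a computable $\mcal{M}$-gale. First, I would computably enumerate all pairs $(N_k, l_k)$ such that $N_k$ is a computable $\mcal{M}_{l_k}$-gale, and, by scale-closedness of each $\mcal{M}_l$, replace $N_k$ by $\delta_k N_k$ for a computable sequence $\delta_k > 0$ decreasing fast enough that $\sum_k \delta_k$ is negligible relative to the slack terms $d_k - c_k$ chosen below.

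Next, setting $L_k := \max(l_0,\ldots,l_k)$ and using $\mcal{M}_l \subseteq \mcal{M}_{l+1}$, I would apply the hypothesis of the claim to $\mcal{M}_{L_k}$ with parameter $k+1$ to obtain computable sequences $n_k \in \omega$ and rationals $0 < c_0 < d_0 < c_1 < d_1 < \cdots < 2$ such that Alice wins the $(c_k/d_k, n_k, k+1)$-$\mcal{M}_{L_k}$-game with cost $\m(A) \leq 2^{-k}$; by scale-closedness this is equivalent to a ``threshold $d_k$'' variant where Baby's catching inequality reads $\|\Mb(\h\sigma)\|_1 \geq d_k$ and Alice's goal is $\|\Mb(\emptyset)\|_1 \geq c_k$. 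The construction of $x$ then proceeds exactly as in the proof of Theorem \ref{singth3}: at stage $k$ Alice already has a tentative string $\sigma_{k-1}^*$ and plays her stage-$k$ strategy on the shifted subtree $[\sigma_{k-1}^*]^\preceq \cap 2^{n_0+\cdots+n_k}$ (valid by \stronghomogeneity) against the tuple $(N_0, \ldots, N_k)$. Each string issued by the strategy enters $V_k$ and is tentatively labelled $\sigma_k^*$; if a later stage reveals that a tentative choice is caught, Alice retracts the substages underneath and asks the stage-$k$ strategy for its next output. With $\delta_k$ chosen small relative to $d_k - c_k$, whenever a genuine $\sigma_k^*$ materialises one has $\sum_{j \leq k} N_j(\h\sigma) \leq d_k$ uniformly for $\h\sigma \preceq \sigma_k^*$, so no $N_k$ succeeds on $x := \cup_k \sigma_k^*$; and $\m(V_k) \leq 2^{-k}$ makes $(V_k)$ a Martin-L\"{o}f test capturing $x$.

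The main obstacle is the effectivity of Alice's play against a computable $\mcal{M}$-gale. The gale $N_k$ is given by an approximation $(N_k[t])_t$, whereas the game requires Baby at round $t$ to return an entire $\mcal{M}_{L_k}$-approximation $\Mb[\leqslant t]$ satisfying the catching inequality at round $t$; this may only become apparent at an approximation stage $\hat t \gg t$. This is precisely where the \shrinkinginvariant\ and $\Pi_1^0$-class hypotheses are used, as in footnote \ref{singfootnote0}: exploiting the uniformly $\Pi_1^0$ presentation of $\mcal{M}_{L_k}$, Alice effectively searches for approximation stages $t_0 < \cdots < t_{s-1}$ at which the cumulative vector meets the threshold, and then feeds $(\Mb[t_0], \ldots, \Mb[t_{s-1}])$ as Baby's move. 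Subsequence-closedness ensures this is a legal $\mcal{M}_{L_k}$-play, while nondecreasingness is automatic from the gale presentation. With these effectivity ingredients the whole construction of $x$ is computable, so $(V_k)$ is genuinely a uniformly c.e.\ Martin-L\"{o}f test and $x$ is non-1-random.
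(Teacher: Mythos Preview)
Your proposal is correct and follows essentially the same route as the paper, which simply points back to the proof of Theorem~\ref{singth3} from Lemma~\ref{singlem41}; you have fleshed out the two adaptations that the paper leaves implicit, namely the bookkeeping $L_k=\max(l_0,\dots,l_k)$ to handle the stratification $\mcal{M}=\cup_l\mcal{M}_l$, and the effectivity of Alice's play via the \shrinkinginvariant\ and $\Pi_1^0$ hypotheses (which the paper addresses only in footnote~\ref{singfootnote0}). Your choice of per-stage cost $2^{-k}$ rather than the paper's $1/2$ is harmless overkill, since the nesting already gives $\m(V_k)\leq\prod_{j\leq k}\varepsilon_j$.
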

\begin{remark}
We note that \shrinkinginvariant\ and $\Pi_1^0$-class of $\mcal{M}_l$
are essential for Claim \ref{singeqgen1} (see footnote \ref{singfootnote0}).
On the other hand, homogeneity is only for convenience (see remark \ref{singremhomo}).
Actually, nondecreasing may not be essential
for Claim \ref{singeqgen1}. But
discussing this issue does seem worth wile.
\end{remark}
As commented in remarks \ref{singrem1}, \ref{singrem2}, \ref{singrem3} and \ref{singremhomo}
(that nesting, dynamic goal, restriction and reducing to \variancee\ game only concern
nondecreasing, supermartingale  and \homogeneity\ of $\mcal{M}$),
the winning strategy of $\mcal{M}$-game \ref{singdefgen0} (with arbitrarily small cost)
is reduced to the winning strategy of the corresponding \variancee\
game (with a cost smaller than $1$).
\begin{definition}[\variancee\ $(a,\Delta ,m,k)$-$\mcal{M}$-game]
\label{singdefgen2}
At each round $t\in\omega$, \alice\ firstly enumerates a $\sigma\in  2^m$;
then \baby\ presents $M_j[t]$ with $M_j[\leqslant t] \in\mcal{M}$ for each $j<k$ such that
\begin{itemize}
\item for every $\sigma\in A[t]$,
 $|| \Mb[t]( \sigma)||_1\geq 1 $.
\end{itemize}
\alice\ wins the game if for some $t$,
\begin{itemize}
\item (type-(a)) either $1-|| \Mb[t](\emptyset)||_1\leq  \frac{1}{a} (1-\m(A[t]))$;
\item (type-(b))  or $\variance(\Mb[t]|A[t])\geq \Delta$
\end{itemize}
\end{definition}

By remark \ref{singrem-1},
a winning strategy for \variancee\ game \ref{singdefgen2} is
reduced to the special case $k=1$ and
that reduction only depends on nondecreasing, supermartingale
and \homogeneity\ so:
\begin{claim}\label{singgenclaim0}
Let $\mcal{M}$ be nondecreasing and \homogeneous and
suppose for every  $a>0$,
there are $m\in\omega$, $\Delta>0$
such that
\alice\ has a winning strategy
for
  \variancee\ $(a,\Delta ,m,1)$-$\mcal{M}$-game \ref{singdefgen2}
such that $\m(A)<1$.
Then for
every $ c<1, \varepsilon>0$ and $k\in\omega$,
there is an $n\in\omega$ such that
 \alice\ has a winning strategy
for   $(c,n,  k)$-$\mcal{M}$-game \ref{singdefgen0} such that
$\m(A)\leq \varepsilon$.
\end{claim}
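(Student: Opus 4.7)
The plan is to mirror the reduction chain developed in \S\ref{singsec2}--\S\ref{singseccatching} for kastergales, substituting the ambient class $\mcal{M}$ for the class of partial-computably-sided supermartingales throughout. The remarks \ref{singrem1}, \ref{singrem2}, \ref{singrem3}, \ref{singrem-1} and \ref{singremhomo} are meant precisely to license this substitution: each of the four key reductions (nesting, dynamic goal, restriction on \baby, and reduction to the \variancee\ game) depends only on $\mcal{M}$ being nondecreasing and \homogeneous\ (together with scale-closure, which I treat as implicit in the section's setup whenever a scaled sub-game is invoked), and not on any particular \sided ness assumption.

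First I would establish, by induction on $k$, a winning strategy for the \variancee\ $(a,\Delta,m,k)$-$\mcal{M}$-game \ref{singdefgen2} with $\m(A)<1$. The base case $k=1$ is the hypothesis of the claim. For the induction step I would replay the two-phase strategy of \S\ref{singsec100}: in phase~1, \alice\ defeats $M_1,\dots,M_{k-1}$ using a winning strategy for a (scaled) $(c,m/2,k-1)$-$\mcal{M}$-game \ref{singdefgen0}, supplied by the outer induction described below; in phase~2 she defeats the remaining $M_0$ in the same fashion. The crucial observation, identical to the one in \S\ref{singsec100}, is that as long as \alice\ has not already triggered type-(b), the variance of $\Mb$ on the strings she has enumerated is at most $\Delta$, so each $M_j$ is essentially constant on $A[t]$ and the martingales cannot cooperate; defeating them one by one then combines into a type-(a) win unless type-(b) has meanwhile occurred.

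Second, with a \variancee-game winning strategy for level~$k$ in hand, I would invoke the $\mcal{M}$-analogues of Claim~\ref{singclaim966}, Claim~\ref{singclaim964}, Claim~\ref{singclaim962} and Claim~\ref{singclaim961} in succession: reducing to the restricted dynamic $(a,\delta,n,k)$-$\mcal{M}$-game by nesting \variancee-strategies and applying the variance accounting of Claim~\ref{singclaim5}; dropping \baby's restrictions by reserving the distinguished successor $\sigma 0^{\hat n}$ whenever \alice\ would otherwise enumerate $\sigma$; converting the dynamic goal into the fixed threshold $1-2\delta$ with cost $\leq 1-a\delta$; and finally amplifying via nesting to an arbitrary threshold $c<1$ with arbitrarily small cost $\varepsilon$. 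Each of these steps was proved using only the supermartingale property, nondecreasingness in $t$, and (for the shifted sub-games) homogeneity; as flagged in the remarks, no step exploits partial-\sided ness, so each transfers verbatim to the class $\mcal{M}$.

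The main obstacle, and the only part that is not straightforwardly mechanical, is interleaving the induction on $k$ with the chain of reductions: the induction step of the \variancee\ game at level $k$ requires a winning strategy for the full $(c,n,k-1)$-$\mcal{M}$-game \ref{singdefgen0} at the previous level, which itself is the conclusion being proved. This is handled by stacking the inductions in the correct order, so that for each $k$ one first obtains the \variancee-game strategy from the inductively available $(c,n,k-1)$-$\mcal{M}$-game strategy, and then pushes it through the four reductions to yield the $(c,n,k)$-$\mcal{M}$-game strategy that will feed into the next step. Modulo this careful bookkeeping, the argument is a faithful transcription of \S\ref{singsec2}--\S\ref{singsec100}.
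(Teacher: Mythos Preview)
Your proposal is correct and follows essentially the same approach as the paper: the paper's proof is nothing more than an appeal to remarks \ref{singrem1}, \ref{singrem2}, \ref{singrem3}, \ref{singrem-1} and \ref{singremhomo}, and you have spelled out precisely what those remarks license, including the correct interleaving of the induction on $k$ with the four-step reduction chain. Your observation that scale-closure is tacitly needed for the scaled sub-games is well taken and is indeed part of the ambient hypotheses on $\mcal{M}$ in \S\ref{singsecgen}.
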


We already demonstrated how to defeat a single
kastergale (in the \variancee\ game) in \S  \ref{singsec100}.
Thus, to prove Theorem \ref{singth5},
it suffices to show how to defeat a single muchgale.
However, this is obvious: for the class of $(l,i)$-betting
supermartingales, \alice\ could simply enumerate the set $\{\sigma\in 2^n:\sigma(i) = 0\}$,
forcing $M[t](\emptyset)\geq 1$.
By Claims \ref{singeqgen1}, \ref{singgenclaim0}, we get: \singthunion*

\begin{remark}
On the other hand, if \alice\ could not even
win \baby\ (in the \variancee\ game \ref{singdefgen2}) when $k=1$,
by the definition of \variancee\ game \ref{singdefgen2},
it is readily seen that some member
(the one \baby\ used against \alice) of $\mcal{M}$
is almost as flexible as a general left-c.e. supermartingale.
This strongly supports conjecture \eqref{singconj0}:
if a natural subclass of left-c.e. supermartingale  defines 1-randomness, then a single member
of that class can do so.
\end{remark}

\section{Conclusion}\label{FelK1JLCyK}
We have shown that a wide class of restricted enumerable strategies including kastergales and muchgales
cannot define 1-randomness, answering questions by Kastermans and Hitchcock as well as generalizing Muchnik's paradox.
In this process we developed a general methodology, which
strongly supports the thesis that the universal \lce supermartingales
is {\em irreducible} in the class of \lce supermartingales, in the sense that no restricted class of \lce supermartingales succeed
on all reals on which the universal \lce supermartingales succeeds.
We have noted that simple special cases of this result, such has the case of single-sided betting strategies, were immune to existing methods which were largely recursion-theoretic.

One drawback of our methodology is that it does not give any information about the effective Hausdorff dimension of the constructed non-1-random reals.
In the same way that non-1-randomness is related to the success of \lce supermartingales,
effective Hausdorff dimension relates to the growth-rate of the capital in betting strategies:
\citet{Lutz:00,Lutz:03} showed that the reals $x$ with effective Hausdorff dimension $<1$ are exactly the reals on which a
\lce supermartingale succeeds with exponential growth of the capital.
In this sense, effective Hausdorff dimension can gauge the power of a subclass of \lce betting strategies in a more refined way. by
examining the complexity of the reals $x$ where they fail.
For example, in the case of the class $\MM$ of
effective mixtures of single-sided martingales,
\citet{barmpalias2020monotonous}  showed that there exists a real $x$ of effective Hausdorff dimension
$\dim_H (x)=1/2$
such that no betting strategy in $\MM$ succeeds on $x$; moreover, this is optimal in the sense that some betting strategy in $\MM$ succeeds on
all reals $y$ with $\dim_H (y)<1/2$. For the case of muchgales, a similar result was obtained by \citet{muchpar22}.
However, the question for kastergales or even single-sided \lce supermartingales is wide-open:
\begin{equation*}
\parbox{11cm}{Is there a real with effective Hausdorff dimension $<1$ such that no single-sided \lce supermartingale (alt.\ no kastergale)
succeeds on it?}
\end{equation*}
The fact that this question remains open indicates the power of single-sided enumerable betting strategies is over other forms of restricted betting strategies such as muchgales.

\bibliographystyle{abbrvnat}
\bibliography{sided}

\begin{thebibliography}{21}
\providecommand{\natexlab}[1]{#1}
\providecommand{\url}[1]{\texttt{#1}}
\expandafter\ifx\csname urlstyle\endcsname\relax
  \providecommand{\doi}[1]{doi: #1}\else
  \providecommand{\doi}{doi: \begingroup \urlstyle{rm}\Url}\fi

\bibitem[Barmpalias and Liu(2022)]{muchpar22}
G.~Barmpalias and L.~Liu.
\newblock Aspects of {M}uchnik’s paradox in restricted betting.
\newblock Preprint: \url{http://arxiv.org}, 2022.

\bibitem[Barmpalias et~al.(2020)Barmpalias, Fang, and
  Lewis-Pye]{barmpalias2020monotonous}
G.~Barmpalias, N.~Fang, and A.~Lewis-Pye.
\newblock Monotonous betting strategies in warped casinos.
\newblock \emph{Information and Computation}, 271:\penalty0 104480, 2020.

\bibitem[Bauwens(2014)]{stacsBauwens14}
B.~Bauwens.
\newblock Asymmetry of the {K}olmogorov complexity of online predicting odd and
  even bits.
\newblock In E.~W. Mayr and N.~Portier, editors, \emph{31st International
  Symposium on Theoretical Aspects of Computer Science {(STACS} 2014), {STACS}
  2014, March 5-8, 2014, Lyon, France}, volume~25 of \emph{LIPIcs}, pages
  125--136. Schloss Dagstuhl - Leibniz-Zentrum f{\"{u}}r Informatik, 2014.

\bibitem[Bienvenu et~al.(2009)Bienvenu, H{\"o}lzl, Kr{\"a}ling, and
  Merkle]{Bienvenu.Hoelzl.ea:09}
L.~Bienvenu, R.~H{\"o}lzl, T.~Kr{\"a}ling, and W.~Merkle.
\newblock Separations of non-monotonic randomness notions.
\newblock In \emph{6th Int'l Conf. on Computability and Complexity in
  Analysis}, 2009.

\bibitem[Chernov et~al.(2008)Chernov, Shen, Vereshchagin, and
  Vovk]{ChernovSVV08}
A.~V. Chernov, A.~Shen, N.~K. Vereshchagin, and V.~Vovk.
\newblock On-line probability, complexity and randomness.
\newblock In Y.~Freund, L.~Gy{\"{o}}rfi, G.~Tur{\'{a}}n, and T.~Zeugmann,
  editors, \emph{Algorithmic Learning Theory, 19th International Conference,
  {ALT} 2008, Budapest, Hungary, October 13-16, 2008. Proceedings}, volume 5254
  of \emph{Lecture Notes in Computer Science}, pages 138--153. Springer, 2008.

\bibitem[Downey(2012)]{rodkasternewt}
R.~Downey.
\newblock Randomness, computation and mathematics.
\newblock In \emph{Proceedings of the 8th Turing Centenary Conference on
  Computability in Europe: How the World Computes}, CiE'12, pages 162--181,
  Berlin, Heidelberg, 2012. Springer-Verlag.

\bibitem[Downey and Hirschfeldt(2010)]{rodenisbook}
R.~G. Downey and D.~Hirschfeldt.
\newblock \emph{Algorithmic Randomness and Complexity}.
\newblock Springer, 2010.

\bibitem[Dubins and Savage(1965, 1976)]{howgamble}
L.~E. Dubins and L.~J. Savage.
\newblock \emph{How to gamble if you must: inequalities for stochastic
  processes}.
\newblock Series in probability and statistics. McGraw-Hill (1965) and Dover
  (1976), New York, 1965, 1976.

\bibitem[Kelly(1956)]{kellyorig}
J.~L. Kelly.
\newblock A new interpretation of information rate.
\newblock \emph{Bell Syst. Tech. J.}, 35\penalty0 (4):\penalty0 917--926, 1956.

\bibitem[Kolmogorov(1998)]{Kolmogorov98}
A.~N. Kolmogorov.
\newblock On tables of random numbers.
\newblock \emph{Theoret. Comput. Sci.}, 207\penalty0 (2):\penalty0 387--395,
  1998.
\newblock ISSN 0304-3975.
\newblock Reprinted from Sankhy\=a Ser.\ A {\bf 25} (1963), 369--376; MR0178484
  (31 \#2741).

\bibitem[Loveland(1966)]{Loveland66}
D.~Loveland.
\newblock A new interpretation of the von {M}ises' concept of random sequence.
\newblock \emph{Z. Math. Logik Grundlagen Math.}, 12:\penalty0 279--294, 1966.

\bibitem[Lutz(2000)]{Lutz:00}
J.~H. Lutz.
\newblock Gales and the constructive dimension of individual sequences.
\newblock In \emph{Automata, languages and programming (Geneva, 2000)}, volume
  1853 of \emph{Lecture Notes in Comput. Sci.}, pages 902--913. Springer,
  Berlin, 2000.

\bibitem[Lutz(2003)]{Lutz:03}
J.~H. Lutz.
\newblock The dimensions of individual strings and sequences.
\newblock \emph{Information and Computation}, 187, 2003.

\bibitem[Martin-L{\"o}f(1966)]{MR0223179}
P.~Martin-L{\"o}f.
\newblock The definition of random sequences.
\newblock \emph{Information and Control}, 9:\penalty0 602--619, 1966.

\bibitem[Merkle et~al.(2006)Merkle, Miller, Nies, Reimann, and
  Stephan]{Merkleea06}
W.~Merkle, J.~S. Miller, A.~Nies, J.~Reimann, and F.~Stephan.
\newblock {K}olmogorov--{L}oveland randomness and stochasticity.
\newblock \emph{Annals of pure and applied logic}, 138\penalty0 (1-3):\penalty0
  183--210, 2006.

\bibitem[Muchnik(2009)]{muchnik2009algorithmic}
A.~A. Muchnik.
\newblock Algorithmic randomness and splitting of supermartingales.
\newblock \emph{Problems of Information Transmission}, 45\penalty0
  (1):\penalty0 54--64, 2009.

\bibitem[Schnorr(1971)]{Schnorr:71}
C.~Schnorr.
\newblock A unified approach to the definition of random sequences.
\newblock \emph{Mathematical Systems Theory}, 5\penalty0 (3):\penalty0
  246--258, 1971.

\bibitem[Schweinsberg(2005)]{schweinsberg_2005}
J.~Schweinsberg.
\newblock Improving on bold play when the gambler is restricted.
\newblock \emph{Journal of Applied Probability}, 42\penalty0 (2):\penalty0
  321--333, 2005.

\bibitem[Ville(1939)]{Ville1939}
J.~Ville.
\newblock \emph{\'{E}tude Critique de la Notion de Collectif}.
\newblock Monographies des Probabilit\'{e}s. Calcul des Probabilit\'{e}s et ses
  Applications. Gauthier-Villars, Paris, 1939.

\bibitem[von Mises(1919)]{MR21400mises}
R.~von Mises.
\newblock {Grundlagen der Wahrscheinlichkeitsrechnung}.
\newblock \emph{{Mathematische Zeitschrift}}, 5:\penalty0 52--99, 1919.

\bibitem[von Mises(1957)]{misesbook}
R.~von Mises.
\newblock \emph{Probability, Statistics and Truth}.
\newblock Dover publications, Inc., New York, 1957.
\newblock Second revised edition. Hilda Geiringer (Translator).

\end{thebibliography}

\end{document}